\numberwithin{figure}{subsection}
\numberwithin{table}{subsection}
\numberwithin{equation}{subsection}
\newcolumntype{L}{>{\centering\arraybackslash}m{1.7cm}}
\newcommand{\Z}{\mathbb Z}
\newcommand{\Q}{\mathbb Q}
\newcommand{\R}{\mathbb R}
\newcommand{\C}{\mathbb C}
\newcommand{\F}{\mathbb F}
\newcommand{\Ps}{\mathbb{P}}
\newcommand{\Pone}{\Ps^1}
\newcommand{\A}{\mathbb{A}}
\newcommand{\Aone}{\A^1}
\newcommand{\Zp}{\Z_p}
\newcommand{\Cp}{\C_p}
\newcommand{\Qp}{\Q_p}
\newcommand{\Qpbar}{\overline{\Q}_p}
\newcommand{\Zpbar}{\overline{\Z}_p}
\newcommand{\Fpbar}{\overline{\F}_p}
\newcommand{\Rplus}{\R_{>0}}
\newcommand{\Spa}{\operatorname{Spa}}
\newcommand{\supp}{\operatorname{supp}}
\newcommand{\Spec}{\operatorname{Spec}}
\newcommand{\maxSpec}{\operatorname{max-Spec}}
\newcommand{\Frac}{\operatorname{Frac}}
\newcommand{\Spv}{\operatorname{Spv}}
\newcommand{\Cont}{\operatorname{Cont}}
\newcommand{\disc}{\operatorname{disc}}
\newcommand{\img}{\operatorname{img}}
\newcommand{\Gauss}{\text{Gauss}}
\newcommand{\id}{\mathrm{id}}
\newcommand{\val}[1]{|#1|}
\newcommand{\bval}{\val{-}}
\newcommand{\norm}[1]{{|\!|#1|\!|}}
\newcommand{\bnorm}{{\norm{-}}}
\newcommand{\triv}{\mathrm{triv}}
\newcommand{\ord}{\operatorname{ord}}
\newcommand{\lr}[1]{\langle{#1}\rangle}
\newcommand{\im}{\operatorname{im}}
\newcommand{\calO}{\mathcal O}
\newcommand{\OCp}{\calO_{\Cp}}
\newcommand{\frakm}{\mathfrak m}
\newcommand{\frakp}{\mathfrak p}
\newcommand{\frakq}{\mathfrak q}
\newcommand{\frakQ}{\mathfrak Q}
\newcommand{\fraka}{\mathfrak a}
\newcommand{\term}[1]{\textbf{#1}}
\theoremstyle{plain}
\newtheorem{theorem}[subsubsection]{Theorem}
\newtheorem{lemma}[subsubsection]{Lemma}
\newtheorem{proposition}[subsubsection]{Proposition}
\newtheorem{definition}[subsubsection]{Definition}
\theoremstyle{definition}
\newtheorem{warning}[subsubsection]{Warning}
\newtheorem{example}[subsubsection]{Example}
\newtheorem{exercise}{Exercise}[subsection]
\begin{document}

\title{Huber rings and valuation spectra}

\author{John Bergdall}
\address{University of Arkansas\\Department of Mathematical Sciences\\850 W Dickson Street, Fayetteville, AR.\\
USA.}
\email{bergdall@uark.edu}


\subjclass[2000]{11F33 (11F80, 11F85)}

\keywords{Huber rings, valuation theory, adic spaces}

\maketitle

\setcounter{tocdepth}{1}
\tableofcontents

\section*{Introduction}

These notes expand on a four-hour lecture course given in Heidelberg in March 2023, as part of the ``Spring School on non-Archimedean Geometry and Eigenvarieties''. They are designed for graduate students and other learners. We introduce Huber rings and valuation theory alongside frequent examples. The notes are largely self-contained, though many details are given in exercises found following each lecture.

\subsection*{Context}

Hensel developed the $p$-adic numbers and their analysis in the waning years of the 19th century. Tate's theory of rigid analytic spaces dates to the 1960's \cite{Tate-RigidAnalytic}. The $p$-adic numbers allow for number theory modeled on power series expansions. Tate's theory models analytic geometry over the $p$-adic numbers, building spaces such as discs, annuli, and more, along with robust definitions of their rings of analytic functions. These models are applied to study problems in both geometry and number theory. One original motivation was uniformizing $p$-adic elliptic curves with split multiplicative reduction, now called Tate curves, via rigid analytic maps, in analogy with complex uniformization of (all) elliptic curves over the complex numbers.

Tate develops rigid analytic spaces using a class of rings, now called affinoid algebras, and their maximal ideal spectra. He equips these spaces with presheaves of functions, which he proves are actually sheaves. More precisely, the sheaves are sheaves only for a so-called Grothendieck topology. This major caveat is responsible for significant challenges in learning and using Tate's theory.

Three new theories were developed starting in the late 1980's:
\begin{enumerate}[label=(\roman*)]
\item Raynaud's formal models \cite{Bosch-RigidGeometry1,Bosch-RigidGeometry2,Bosch-RigidGeometry3,Bosch-RigidGeometry4}.
\item Berkovich's analytic spaces \cite{Berkovich-SpectralTheory,Bekovich-EtaleCohomology}.
\item Huber's adic spaces \cite{Huber-ContinuousValuations,Huber-Generalization,Huber-EtaleCohomology}.
\end{enumerate}

Conrad's four lectures at the 2007 Arizona Winter School \cite{Conrad-RigidGeometry} focused on Tate's theory, along with the work of Bosch--L\"utkebohmert--Raynaud and Berkovich. Our lectures in Heidelberg, and those of our colleagues H\"ubner \cite{Hubner-AdicSpaces}, Johansson \cite{Johansson-Sheaves} and Heuer \cite{Heuer-PerfectoidLectures}, discuss Huber's theory and its applications.

Why write these notes now? First, interest in adic spaces has exploded since they became a pillar for Scholze's  perfectoid spaces \cite{Scholze-Perfectoid,ScholzeWeinstein-Berkeley}. For instance, the 2017 Arizona Winter School was dedicated to perfectoid spaces, and those lectures necessarily included only a rapid introduction to adic spaces \cite{AWS-Perfectoid}. Second, eigenvarieties are the Spring School's second topic. These are traditionally developed as rigid analytic spaces, in Tate's style, by Hida, Coleman and Mazur, Buzzard, and many more. Recent works \cite{AndreattaIovitaPilloni-Halo,JohanssonNewton-Extended,Gulotta-Thesis}, however, extend eigenvarieties to characteristic $p$ local fields. All those works require Huber's perspective.

\subsection*{Motivation}
The first two lectures focus on spaces of valuations, on which Huber's theory is based. As background, we recall Tate's spaces and how to shift into a valuative mindset. We work over $\Cp$, the $p$-adic complex numbers. The field $\Cp$ is complete for the $p$-adic norm $\bnorm_p$ {\em and} algebraically closed, which makes geometry more clear.

The fundamental ring in Tate's theory is the Tate algebra $\Cp\langle w \rangle$. It is defined as the ring of series
\begin{equation*}
f = a_0 + a_1 w + a_2w^2 + \dotsb \in \Cp\llbracket w \rrbracket   
\end{equation*}
with the property that
\begin{equation}\label{eqn:limit-equation}
\lim_{i\rightarrow \infty} \norm{a_i}_p = 0.
\end{equation}

Tate models the closed unit disc over $\Cp$ as maximal ideals in $\Cp\lr w$. To explain, if $f \in \Cp\lr w$ and $\norm{\alpha}_p\leq 1$, then $f(\alpha)$ converges by \eqref{eqn:limit-equation}. The evaluation map
\begin{align}\label{eqn:evaluation-map}
\Cp\lr w &\xrightarrow{f \mapsto f(\alpha)} \Cp
\end{align}
therefore exists and has kernel $\lr{w-\alpha}\in \maxSpec(\Cp\lr{w})$. The \term{Weierstrass preparation theorem} implies all maximal ideals arise this way. This gives a bijection
\begin{equation*}
\{\alpha \in \Cp \mid \norm{\alpha}_p\leq 1\} \longleftrightarrow \maxSpec(\Cp\lr w).
\end{equation*}

Maximal ideals also detect inequalities needed for analytic geometry. For instance, $\norm{\alpha}_p \leq \frac{1}{p}$ if and only if $w-\alpha$ generates a maximal ideal in the larger ring
\begin{align*}
\Cp\lr{\frac{w}{p}}  &= \{b_0 + b_1\frac{w}{p} + b_2(\frac{w}{p})^2 + \dotsb \mid \lim_{i\rightarrow \infty} \norm{b_i}_p = 0\}\\
&= \{a_0 + a_1w + a_2w^2 + \dotsb \mid \lim_{i\rightarrow \infty} \norm{a_i}_pp^{-i} = 0\}.
\end{align*}
The ring $\Cp\lr{\frac{w}{p}} \cong \Cp\lr{w,v}/\langle pw-v\rangle$ is an example of a $\Cp$-affinoid algebra. Rigid spaces are glued from affinoid spaces, which are the maximal ideal spectra of affinoid algebras, in analogy to how schemes are glued from the prime ideal spectral of rings.

This model for geometry faces a major technical issue. It is too easily disconnected. In Tate's theory, 
a disc of radius $p^{-s} < 1$ is given by
\begin{equation*}
\{\norm{\alpha}_p \leq p^{-s} \} \leftrightarrow \maxSpec(\Cp\lr{\frac{w}{p^s}}).
\end{equation*}
These cover the open unit disc $\{\norm{\alpha}_p < 1\}$. The boundary of the unit disc is
\begin{equation*}\label{pageref:tate-decomposition}
\{\norm{\alpha}_p = 1\} \leftrightarrow \maxSpec(\Cp\lr{w,w^{-1}}) = \maxSpec(\Cp\lr{w,v}/\lr{wv-1}).
\end{equation*} 
Thus, the closed unit disc decomposes
\begin{equation}\label{eqn:tates-problem}
\{\norm{\alpha}_p \leq 1\} = \{\norm{\alpha}_p = 1\} \cup \bigcup_{s > 0}  \{\norm{\alpha}_p \leq p^{-s} \},
\end{equation}
with each piece being affinoid. 

Why is this disconnection an issue? A na\"ive sheaf theory would produce $\Cp\lr w$ as the ring of functions on $\{\norm{\alpha}_p \leq 1\}$. The disconnection \eqref{eqn:tates-problem} would then say that a series $\Cp\lr w$ can theoretically be defined by prescribing an analytic series on the disc's ``boundary'' $\{\norm{\alpha}_p = 1\}$ and, independently, a compatible collection of series on disc $\{\norm{\alpha}_p \leq p^{-s} \}$ with $s > 0$. So, a na\"ive sheaf theory would allow a single series that identically vanishes on the interior of the disc and not on the boundary. But, such a series is disallowed by the Weierstrass preparation theorem. Tate's solution was to use the language of Grothendieck topologies to disallow coverings such as \eqref{eqn:tates-problem}.

Huber proposes a different model of $p$-adic geometry. (Berkovich's approach has the same origin.)  Returning to the evaluation maps \eqref{eqn:evaluation-map}, if $\norm{\alpha}_p \leq 1$, we define a (semi-)norm $\bval_\alpha$ on $\Cp\langle w \rangle$ by 
\begin{equation}\label{eqn:motivation-valuation-alpha}
\val{f}_\alpha = \norm{f(\alpha)}_p.
\end{equation}
The maximal ideal $\lr{w-\alpha}$ is equal to the \term{support} of $\bval_\alpha$, which is the set of  $f$ such that $\val{f}_\alpha = 0$. So, Tate's ``points'' are recovered directly from these norms. However, there are more norms on $\Cp\lr{w}$. For instance, there is the \term{Gauss norm}
\begin{equation}\label{eqn:gauss-norm}
\norm{a_0+a_1w + a_2w^2 + \dotsb}_{\Gauss} = \max_{i\geq 0} \norm{a_i}_p.
\end{equation}
This norm is special because it is a norm that gives $\Cp\lr w$ the structure of a complete topological ring. It is of a different nature than $\bval_\alpha$ in the sense its support is $\{0\}$.

Huber's theory goes even further. It considers more general objects called \term{valuations}. These satisfy the axioms of norms, except their target is not always the non-negative real numbers. Huber models rigid analytic geometry on spaces of {\em continuous} valuations on topological rings, such as $\Cp\lr w$.

\subsection*{Lecture contents}

The first two lectures address Huber rings and their continuous valuation spectra. By the end of the second lecture, we explain:
\begin{enumerate}[label=(\roman*)]
\item Huber's continuous valuation spectra and how localization theory detects subdiscs such as $\{\norm{\alpha}_p\leq \frac{1}{p}\}$.\label{enum-part:inequalities-good}
\item Huber's natural model for the closed unit disc will contain at least one point is that {\em not} naturally part of the ``covering'' seen in  \eqref{eqn:tates-problem}.\label{enum-part:missing-points-found}
\end{enumerate}
Point \ref{enum-part:inequalities-good} supports comparing Huber's theory with Tate's. Point \ref{enum-part:missing-points-found} suggests Huber's theory treats coverings and analytic functions differently than Tate's.

H\"ubner's initial lecture on adic spectra \cite{Hubner-AdicSpaces} comes between our second and third lectures. The reader should read that first and then come back to our third lecture, where we detail further constructions with adic spectra.

Our fourth lecture is independent from those of our colleagues. In it, we analyze the closed unit disc, focusing on a systematic explanation of the point referred to in \ref{enum-part:missing-points-found} above.

\subsection*{References}

Huber rings and their valuation theory are the topic of Huber's papers \cite{Huber-ContinuousValuations, Huber-Generalization}. Other published references include Scholze's seminal work \cite{Scholze-Perfectoid} and Weinstein's lectures at the 2017 Arizona Winter School \cite{AWS-Perfectoid}. In addition, many have learned with the help of unpublished notes of Wedhorn \cite{Wedhorn-AdicSpaces}, Conrad \cite{Conrad-PerfectoidSeminarNotes}, and Morel \cite{Morel-AdicSpaceNotes}. These resources are all more ambitious than ours. We hope our lectures, in fact, introduce and complement more advanced texts.

\subsection*{Acknowledgements}
We thank Eugen Hellmann, Judith Ludwig, Sujatha Ramdorai, and Otmar Venjakob for their organizing efforts and invitation to speak at the Spring School. We also thank the Algebra Seminar at the University of Arkansas, who listened to practice lectures. Lance Miller, in particular, made significant comments that greatly improved these notes. Our Spring School colleagues Ben Heuer, Katharina H\"ubner, and Christian Johansson were remarkably helpful. They suggested devoting space to the adic Nullstellensatz (Theorem \ref{theorem:Aplus-recovery}), which adds weight to this text. Johansson and Ludwig also helped in key ways, during the School, with the algebraic arguments in Section \ref{section:some-points}. H\r{a}vard Damm-Johnsen and Milan Mal\v{c}i\'c did excellent work as course assistants. Finally, an anonymous reviewer, Zachary Feng, Damm-Johnsen, Kalyani Kansal, Ken Lee, and Gautier Ponsinet are thanked for comments during lectures or via email that improved these notes.

Financial support for travel to Heidelberg was partially provided by the University of Arkansa's Office of the Provost and Office for Faculty Affairs. While writing these notes, the author's work was also supported by NSF grant DMS-2302284 and a Simons Travel Support for Mathematicians Gift.

\section{Huber rings}\label{section:huber}

The primary goal of this lecture is defining Huber rings and giving their initial properties. We emphasize examples and material on bounded-ness and localization.

\subsection{Definition}

\begin{definition}\label{definition:huber-ring}
A \term{Huber ring} is a topological ring $A$ that contains an open subring $A_0$ for which there is an ideal $I \subseteq A_0$ such that:
\begin{enumerate}[label=(\alph*)]
\item The topology on $A_0$ (and thus on $A$) is the $I$-adic topology.\label{enum-part:huber-defn-adic}
\item The ideal $I$ is finitely generated.\label{enum-part:huber-defn-finitely-generated}
\end{enumerate}
\end{definition}

Huber rings are called f-adic rings in the original work \cite{Huber-ContinuousValuations}.  The ``-adic'' refers to condition \ref{enum-part:huber-defn-adic} in Definition \ref{definition:huber-ring}, while the ``f-'' recognizes the finiteness assumption in part \ref{enum-part:huber-defn-finitely-generated}. The shift toward the term ``Huber ring'' follows the introduction of perfectoid spaces and derivative works. The main benefit of the new name is that the ``f'' in f-adic cannot be confused with an italicized ``$f$'', which frequently represents a mathematical object, such as a function.

In \ref{enum-part:huber-defn-adic}, saying $A$ has the $I$-adic topology means a subset $U \ni 0$ is open if and only if $I^n \subseteq U$ for some $n$. The open sets around non-zero elements are determined by translation, since $A$ is a topological ring. Note, however, that $I \subseteq A$ is an open additive subgroup in $A$. It has the structure of an ideal over $A_0$, only.

We call $A_0$ a \term{ring of definition} and $I \subseteq A_0$ an \term{ideal of definition}. The pair $(A_0,I)$ is a \term{pair of definition}. Pairs of definition are auxiliary structures. They exist but need not be specified. Also, there is choice involved. For instance, if $(A_0,I)$ is a pair of definition, then so is $(A_0,I^n)$ for any $n \geq 1$.

\subsection{Examples}\label{sub:examples}

\begin{example}\label{example:ring}
Let $A$ be any ring and $I$ any finitely generated ideal in $A$. We equip $A$ with the $I$-adic topology to make $A$ a topological ring. Since $I$ is finitely generated, $A$ is a Huber ring with pair of definition $(A,I)$.

As a specific case, let $A = \Zp\llbracket w \rrbracket   $. Unlike Tate's affinoid algebras, this is a power series ring without any convergence conditions. It is noetherian and local, with maximal ideal $\frakm = \langle p,w \rangle$. The $\frakm$-adic topology on $A$ turns $A$ into a Huber ring.
\end{example}

\begin{example}\label{example:padic-numbers} The field of $p$-adic numbers $\Qp$ is not covered by Example \ref{example:ring}. Recall $\Qp$ is a topological field with its topology defined by the $p$-adic norm $\bnorm_p$. A neighborhood basis of zero is given by the open balls $\{\norm{\alpha}_p \leq p^{-n}\}$. Therefore, 
\begin{equation*}
\Zp  = \{\alpha \in \Qp \mid \norm{\alpha}_p \leq 1\}
\end{equation*}
is open in $\Qp$. Moreover, for $n \geq 0$ and $\alpha \in \Zp$, we have
\begin{equation*}
\alpha \in p^n \Zp \iff \norm{\alpha}_p \leq p^{-n}.
\end{equation*}
Therefore, the topology defined by $\bnorm_p$ on $\Zp$ is the same as the topology induced by the (principal) ideal $p\Zp$. We have shown $\Qp$ is a Huber ring by identifying $(\Zp,p\Zp)$ as a pair of definition. It is even a Tate ring. See Section \ref{subsec:tate-rings}.
\end{example}

\begin{example}\label{example:nonarchimedean-field}
A \term{non-Archimedean field} is a complete topological field $K$ whose topology is defined by a non-trivial non-Archimedean norm 
\begin{equation*}
\bnorm: K \rightarrow \R_{\geq 0}.
\end{equation*}
Such $K$ are Huber rings, just as for $K = \Qp$. First, the subring
\begin{equation*}
A_0 = \calO_K := \{\alpha \in K \mid \norm \alpha \leq 1\}
\end{equation*}
is open. Second, since $\bnorm$ is non-trivial, there exists $\alpha \in K^\times$ such that $\norm{\alpha} \neq 1$. Define $\varpi = \alpha^{\pm 1}$, making sure $\norm{\varpi} < 1$. Since $\norm{\varpi^n} \rightarrow 0$ as $n\rightarrow +\infty$, the norm topology on $A_0$ is equivalent to the topology defined by the ideal $\varpi A_0 \subseteq A_0$. There is a wide range of choices for $\varpi$. Each such choice is called a \term{pseudo-uniformizer} for $K$. Examples of non-Archimedean fields include perfectoid fields discussed in Heuer's lectures \cite{Heuer-PerfectoidLectures}.

The field $K = \Cp$ is important to keep in mind. By definition, $\Cp$ is the completion of an algebraic closure $\Qpbar$ of $\Qp$ with respect to the $p$-adic norm. The ring $A_0 = \OCp$ is a local ring whose norm topology is defined by the principal ideal $p\OCp$. This is {\em not} the topology defined by the maximal ideal
\begin{equation*}
\frakm_{\OCp} = \{x \in \OCp \mid \norm{x}_p < 1\}.
\end{equation*}
Indeed, since $\Cp$ is algebraically closed we have $\frakm_{\OCp}^2 = \frakm_{\OCp}$. Therefore, the powers of $\frakm_{\OCp}$ do not shrink toward zero as the powers of $p\OCp$ do.
\end{example}

\begin{example}\label{example:one-variable-tate-algebra}
Let $K$ be a non-Archimedean field with norm $\bnorm$. Define
\begin{equation*}
A = K\langle w \rangle = \left\{ \sum_{i=0}^\infty a_i w^i \in K\llbracket w \rrbracket    \mid \lim_{i\rightarrow \infty} \norm{a_i} = 0\right\}.
\end{equation*}
This is called a Tate algebra in one variable, in honor of its role in Tate's rigid analytic geometry. It is a topological ring, with topology defined by a norm $\bnorm_{\Gauss}$
\begin{equation*}
\norm{a_0+a_1w + a_2w^2 + \dotsb}_{\Gauss} = \max_{i\geq 0} \norm{a_i}
\end{equation*}
that we first encountered in \eqref{eqn:gauss-norm}. This norm is called the \term{Gauss norm}, presumably because proofs that $\bnorm_{\Gauss}$ is multiplicative  resemble proofs of Gauss's lemma on irreducibility of integer polynomials over the rational field. The topological ring $A$ is a Huber ring. A ring of definition is
\begin{align*}
A_0 = \calO_K\lr w &= \left\{\sum_{i=0}^{\infty} a_i w^i \in A \mid a_i \in \calO_K \text{ for all $i$}\right\}\\
&= \{f \in A \mid \norm{f}_{\Gauss} \leq 1\}.
\end{align*}
An ideal of definition is 
\begin{equation*}
\varpi A_0 = \left\{\sum_{i=0}^{\infty} a_i w^i \in A \mid a_i \in \varpi \calO_K \text{ for all $i$}\right\} = \{f \in A\mid \norm{f}_{\Gauss} < \norm{\varpi} \}.
\end{equation*}
Here, $\varpi$ is a pseudo-uniformizer for $K$ as in Example \ref{example:nonarchimedean-field}. If $\norm{-}$ is a discrete norm, then the inequality in the prior displayed equation becomes $\norm{f}_{\Gauss} < 1$.
\end{example}

\begin{warning}\label{warning:topological-ring}
In these examples, we have been careful to define a topological ring first and  assert the Huber ring property second. More accurately, in each case we first defined a ring $A$ with a topology. Second, we left as Exercise \ref{exercise:topological-ring-check} to check each $A$ was actually a topological ring. And, third, we identified a pair of definition $(A_0,I)$. 

Often you will find Huber rings $A$ defined only using a pair of definition $(A_0,I)$. After all, taking a topological ring $A_0$ and declaring $A_0\subseteq A$ to be open is a valid way to describe a topology on a ring $A$. It just may not define a topological ring! This issue arises in practice, with rational localizations. So, we propose Exercises \ref{exercise:subring-Iadic}-\ref{exercise:bad-topological-rings} as early work with topological rings.
\end{warning}

\subsection{Example:\ Tate rings}\label{subsec:tate-rings}
Tate rings are important enough that they get their own subsection. A \term{Tate ring} is a Huber ring $A$ in which there exist a {\em unit} $\varpi$ such that $\varpi^n \rightarrow 0$ as $n\rightarrow +\infty$. Such a $\varpi$ is called a \term{pseudo-uniformizer}, borrowing from Example \ref{example:nonarchimedean-field}.

Let us examine the structure of a Tate ring $A$. Choose a pair of definition $(A_0,I)$. Since $\varpi^n \rightarrow 0$ as $n\rightarrow +\infty$, there exists an $n\geq 1$ such that $\varpi^n \in I$. The element $\varpi^n$ is still a pseudo-uniformizer. So, without loss of generality $\varpi \in I \subseteq A_0$. Now we claim:
\begin{enumerate}[label=(\alph*)]
\item The topology on $A_0$ is the $\varpi A_0$-adic topology.\label{enum-part:tate-ring-varpi-adic-topology}
\item Algebraically, $A = A_0[\frac{1}{\varpi}]$.\label{enum-part:tate-ring-varpi-adic-algebra}
\end{enumerate}
Thus, any choice of a pseudo-uniformizer $\varpi$ (in a given ring of definition) simultaneously controls the topological and algebraic structure of a Tate ring. For proof, $\varpi A_0$ is an open $A_0$-ideal, since it is a multiplicative translate of the open subset $A_0 \subseteq A$. But also $\varpi \in I$ and so $\varpi A_0 \subseteq I$. This proves \ref{enum-part:tate-ring-varpi-adic-topology}. For \ref{enum-part:tate-ring-varpi-adic-algebra}, consider any $f \in A$. Since multiplication by $f$ is continuous on $A$, there exists $n$ such that $f\varpi^n A_0 \subseteq A_0$, and therefore $f \in A_0[\frac{1}{\varpi}]$.

\subsection{Rings of definition}

Huber rings are topological rings. Pairs of definition $(A_0,I)$ are auxiliary data. The goal of this section is illustrating the flexibility of this data.

Let $A$ be a topological ring. A subset $X \subseteq A$ is called \term{bounded} if for any open neighborhood $U\ni 0$, there exists an open neighborhood $V \ni 0$ such that $XV \subseteq U$. Note that $X$, $U$, and $V$ are {\em a priori} just subsets. The condition $XV \subseteq U$ means that if $x \in X$ and $v \in V$ then $xv \in U$. If $U$ is an additive subgroup, which is often the case in the context of Huber rings, then the condition $XV \subseteq U$ is equivalent to $X \cdot V \subseteq U$ where $X \cdot V$ is the abelian group generated by $XV$.\footnote{The typographical difference between $XV$ and $X \cdot V$ can cause problems while scanning. The ``$X\cdot V$''-notation is used by Huber \cite{Huber-ContinuousValuations}. It has stuck in many references. Beware!}

The next proposition classifies rings of definition from a topological perspective. It is repeatedly used in analyzing more refined structures in Section \ref{subsec:bounded-conditions}.

\begin{proposition}\label{proposition:ring-of-definition-vs-open-bounded}
If $A$ is a Huber ring, then a subring $A_0 \subseteq A$ is a ring of definition if and only if $A_0$ is open and bounded.
\end{proposition}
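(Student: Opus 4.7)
The proof is a double implication. For the forward direction, suppose $A_0$ is a ring of definition with finitely generated ideal of definition $I$. It is open by assumption, and boundedness is immediate: given an open $U \ni 0$, pick $n$ with $I^n \subseteq U$ (using that $\{I^n\}$ is a neighborhood basis); then $V := I^n$ is open and $A_0 \cdot V = A_0 \cdot I^n = I^n \subseteq U$, since $I^n$ is an $A_0$-ideal.

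For the reverse direction, let $B$ be open and bounded, and fix a pair of definition $(A_0, I)$ for $A$ with $I = (f_1, \dots, f_r)$. The naive attempt to use the $B$-ideal generated by $f_1, \dots, f_r$ founders because $B$ need not contain $A_0$; this is the main obstacle. The plan is to first enlarge $B$ to a ring of definition $C$ that also contains $A_0$, then descend. Set $C := A_0 \cdot B$, the additive subgroup of $A$ generated by products $ab$ with $a \in A_0, b \in B$. It is a subring (closed under multiplication since both $A_0$ and $B$ are), it contains $A_0$ and $B$ so it is open, and it is bounded by successive application of boundedness of $A_0$ and $B$: given $U$, find open $V'$ with $A_0 V' \subseteq U$ and $V''$ with $B V'' \subseteq V'$, so that $C \cdot V'' \subseteq A_0 V' \subseteq U$. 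Let $J_C := (f_1, \dots, f_r) C$. A direct computation using $A_0 \subseteq C$ gives $J_C^n = I^n \cdot C$; combined with $I^n \subseteq J_C^n$, and the fact that boundedness of $C$ furnishes $C \cdot I^k \subseteq I^m$ for $k$ large, this shows $\{J_C^n\}$ is cofinal with $\{I^n\}$ as a neighborhood basis of $0$, so $C$ is a ring of definition with ideal of definition $J_C$.

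To descend from $C$ to $B$, use openness of $B$ to pick $N$ with $J_C^N \subseteq B$, then replace $J_C$ by $J_C^N$ so that its natural generators $g_1, \dots, g_s$, namely the monomials of degree $N$ in the $f_i$, all lie in $B$. Set $J := (g_1, \dots, g_s) B$, a finitely generated $B$-ideal. The inclusion $J \subseteq J_C$ gives $J^n \subseteq J_C^n$, so the powers $J^n$ shrink to $0$. The key estimate in the opposite direction is $J_C^2 \subseteq J$: any generator $c g_i g_j$ of $J_C^2$ with $c \in C$ factors as $(c g_i) \cdot g_j$, where $c g_i \in J_C \subseteq B$, placing the product in $B \cdot g_j \subseteq J$. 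Iterating yields $J_C^{2n} \subseteq J^n$, so every $J^n$ is open. Hence $\{J^n\}$ is a neighborhood basis of $0$ in $B$, confirming that $B$ is a ring of definition with ideal of definition $J$.
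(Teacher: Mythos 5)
Your proof is correct, and its two key computations are the same ones the paper uses: shrinking an ideal of definition into the open subring so that its generators land there, then proving the two cofinality containments (your $J_C^2\subseteq J$ via the factorization $cg_ig_j=(cg_i)g_j$ with $cg_i\in B$, and the reverse containment via boundedness). The difference is organizational: you insert an intermediate ring $C=A_0\cdot B$ and run the argument twice (once to see $C$ is a ring of definition, once to descend to $B$), because you worry that $B$ need not contain the given ring of definition. The paper shows this detour is unnecessary: starting from an arbitrary pair of definition $(B_0,J)$, it first replaces $J$ by a power contained in the open subring, picks $B_0$-generators $f_1,\dotsc,f_d$ of that power (which now lie in your $B$), and directly sets $I=\sum Bf_i$; then $J^2\subseteq I$ follows from $J\subseteq B$, and $I^n\subseteq J$ from boundedness of $B$ — at no point does one need $B$ to contain $B_0$. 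Your construction of $C$ is not wasted effort, though: it is exactly the content of Exercise \ref{exercise:rings-of-definition-filtered} (the product of two open bounded subrings is again one, so rings of definition form a directed family), a fact the paper wants anyway. But as a proof of this proposition it is longer than necessary.
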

\begin{proof}
Let $A$ be a Huber ring and $A_0$ a ring of definition. First, $A_0$ is open by definition. Second, basic open neighborhoods of zero have the form $U = I^n$ with $I \subseteq A_0$ an ideal. Since $A_0 I^n \subseteq I^n$, we see that $A_0$ is bounded (taking $V = I^n$ as well).

We now argue the converse. Assume $A_0$ is open and bounded. Choose any pair of definition $(B_0,J)$ for $A$. Since $A_0$ is open, it contains a power $J^n$ of $J$. The pair $(B_0,J^n)$ is also a pair of definition for $A$. Replacing $J$ by $J^n$, we assume that $J \subseteq A_0$.

Now choose $f_1,\dotsc,f_d \in J$ that generate $J$ as a $B_0$-ideal. Define
\begin{equation*}
I = \sum_{i=1}^d A_0 f_i \subseteq A_0.
\end{equation*} 
We claim $(A_0,I)$ is a pair of definition for $A$. Since $I$ is a finitely generated $A_0$-ideal, we focus on showing the $I$-adic topology is the $J$-adic topology on $A$.

First, we show $J^2 \subseteq I$. Recall $J \subseteq A_0$.  Then,
\begin{equation*}
J^2 = J\cdot (\sum_{i=1}^d B_0 f_i) = \sum_{i=1}^d Jf_i \subseteq \sum_{i=1}^d A_0 f_i = I.
\end{equation*}
Second, we show $I^n \subseteq J$ for some $n$. This is where we use that $A_0$ is bounded. Indeed, since $A_0$ is bounded and $J$ is an ideal, we may choose $n$ such that $A_0 \cdot J^n \subseteq J$. The $A_0$-ideal $I^n$ is additively generated by elements $a f_1^{n_1} \dotsb f_d^{n_d}$ where $n_1+\dotsb+n_d = n$ and $a \in A_0$. Since $f_1^{n_1}\dotsb f_d^{n_d} \in J^n$, we  conclude that $I^n \subseteq A_0 \cdot J^n \subseteq J$, as claimed.
\end{proof}

\subsection{Bounded conditions}\label{subsec:bounded-conditions}
Let $A$ be a topological ring. This subsection introduces power-bounded elements $A^{\circ}$ and topologically nilpotent elements $A^{\circ\circ}$.

\subsubsection{Power-bounded elements}
We say $f \in A$ is \term{power-bounded} if its powers
\begin{equation*}
\{1,f,f^2,f^3,\dotsc\}
\end{equation*}
form a bounded subset in $A$. It is traditional to use $A^{\circ}$ as notation:
\begin{equation*}
A^{\circ} = \{f \in A \mid f \text{ is power-bounded}\}.
\end{equation*}
For instance, if $A = \Cp\lr w$, then $w \in A^{\circ}$, and in fact $A^{\circ} = \OCp\lr w$. (See Exercise \ref{exercise:power-bounded}.)

Note that $f \in A^{\circ}$ demands that for any open subset $U \ni 0$, there is an open subset $V\ni 0$ for which $V \subseteq U$, and $fV \subseteq U$, and $f^2V \subseteq U$, and so on. A finite number of these containments can be arranged, since multiplication by $f$ is continuous on $A$, but the condition that $f \in A^{\circ}$ is more strict. 

It is clear that $A^{\circ}$ is closed for multiplication. It follows from Exercise \ref{exercise:bounded-exercises} that $A^{\circ}$ is in fact a subring of $A$. If $A$ is a Huber ring, it even equals the union of all rings of definition. The main step in the proof is the next lemma.

\begin{lemma}\label{lemma:ring-generated-is-definition}
Let $A$ be a Huber ring. If $A_0$ is a ring of definition and $f \in A^{\circ}$, then $A_0[f]$ is a ring of definition.
\end{lemma}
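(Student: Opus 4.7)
The plan is to apply Proposition \ref{proposition:ring-of-definition-vs-open-bounded}, which characterizes rings of definition as open bounded subrings. So I need to show two things: $A_0[f]$ is open in $A$, and $A_0[f]$ is bounded. Openness is immediate because $A_0 \subseteq A_0[f]$ and $A_0$ is open (it is a ring of definition, so open by Proposition \ref{proposition:ring-of-definition-vs-open-bounded}), hence $A_0[f]$ contains an open neighborhood of each of its points under translation, making $A_0[f]$ itself open.

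All the work is in boundedness. I would use the observation from the preamble: because $A$ is a Huber ring, it has a neighborhood basis of $0$ consisting of the powers $I^n$ of an ideal of definition, which are additive subgroups. Thus it suffices to verify the bounded-ness condition $A_0[f] \cdot V \subseteq U$ only for such $U$. Moreover, because these $U$ are additive subgroups, if a set $X$ satisfies $X \cdot V \subseteq U$ then the additive group generated by $X$ also multiplies $V$ into $U$. Since every element of $A_0[f]$ is a finite sum of elements of the form $a f^n$ with $a \in A_0$ and $n \geq 0$, it will be enough to prove that the set
\begin{equation*}
X = \{a f^n \mid a \in A_0,\ n \geq 0\}
\end{equation*}
is bounded.

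To bound $X$, given an open additive subgroup $U \ni 0$, I would chain the two given boundedness properties. First, since $f \in A^\circ$, the set $\{1,f,f^2,\dotsc\}$ is bounded, so there is an open $V_1 \ni 0$ with $f^n V_1 \subseteq U$ for all $n \geq 0$. Second, since $A_0$ is bounded (again by Proposition \ref{proposition:ring-of-definition-vs-open-bounded}), there is an open $V_2 \ni 0$ with $A_0 V_2 \subseteq V_1$. Then for any $a \in A_0$, $n \geq 0$, and $v \in V_2$, we have $av \in V_1$ and hence $a f^n v = f^n(av) \in U$. This shows $X \cdot V_2 \subseteq U$, and by the additive-subgroup remark above, $A_0[f] \cdot V_2 \subseteq U$. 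Therefore $A_0[f]$ is bounded, completing the proof.

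The main obstacle is conceptual rather than technical: one must recognize that multiplication by $f$ on $V_1$ keeps us inside $U$, while $A_0$ acting on $V_2$ keeps us inside $V_1$, and these have to be composed in the right order so that $A_0$ and $\{f^n\}$ act simultaneously. The argument also relies crucially on the fact that the basis of open neighborhoods consists of additive subgroups, which is where the Huber hypothesis (as opposed to just a topological ring hypothesis) is used to pass from bounded-ness of $X$ to bounded-ness of the additive closure $A_0[f]$.
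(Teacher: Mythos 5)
Your proof is correct and follows essentially the same route as the paper: reduce to showing $A_0[f]$ is open and bounded via Proposition \ref{proposition:ring-of-definition-vs-open-bounded}, with openness immediate from $A_0 \subseteq A_0[f]$ and boundedness obtained by combining the power-boundedness of $f$ with the boundedness of $A_0$. The only cosmetic difference is that the paper works directly with an ideal of definition $I$ (choosing $n$ with $f^m I^n \subseteq I$ and absorbing the $A_0$-factor using the ideal property), whereas you chain two general shrinking neighborhoods $V_1, V_2$; both are fine.
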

\begin{proof}
Since $A$ is a Huber ring, we only need to show $A_0[f]$ is open and bounded, by Proposition \ref{proposition:ring-of-definition-vs-open-bounded}. Since $A_0 \subseteq A_0[f]$ already, the open-ness is clear. What about boundededness? Let $I \subseteq A_0$ be an ideal of definition. Since $f \in A^{\circ}$, there exists $n$ such that $f^m I^n \subseteq I$ for all $m\geq 0$. Since $I$ and $I^n$ are $A_0$-ideals, we see that $A_0[f] I^n \subseteq I$. Then, $A_0[f]$ is bounded by Exercise \ref{exercise:bounded-exercises}.
\end{proof}

\begin{proposition}\label{proposition:power-bounded-union-rings-of-definition}
If $A$ is a Huber ring, then
\begin{equation*}
A^{\circ} = \bigcup_{\substack{A_0 \subseteq A\\\textrm{\rm ring of def.}}} A_0.
\end{equation*} 
So, $A^\circ$ is an open subring in $A$. It is also integrally closed in $A$.
\end{proposition}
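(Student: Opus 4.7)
The plan is to prove the equality of $A^{\circ}$ with the union first; the other properties then follow quickly. For the right-to-left inclusion, any ring of definition $A_0$ is bounded by Proposition \ref{proposition:ring-of-definition-vs-open-bounded}, so for $f \in A_0$ the powers $\{1,f,f^2,\dots\}$ lie inside the bounded set $A_0$ and are themselves bounded, giving $f \in A^{\circ}$. For the left-to-right inclusion, the Huber hypothesis provides some ring of definition $B_0$, and then Lemma \ref{lemma:ring-generated-is-definition} ensures $B_0[f]$ is again a ring of definition for every $f \in A^{\circ}$, so $f$ lies in the union. Openness of $A^{\circ}$ is then immediate because it contains an open ring of definition, and the subring property follows by iterating the lemma: given $f,g \in A^{\circ}$, the ring $B_0[f,g]$ obtained by two applications is a ring of definition containing both $f+g$ and $fg$.

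The remaining and more delicate assertion is integral closure. Suppose $f \in A$ satisfies $f^n + a_{n-1}f^{n-1} + \dots + a_0 = 0$ with $a_i \in A^{\circ}$. I would first produce a single ring of definition $A_0 \subseteq A$ containing every $a_i$, by iterating Lemma \ref{lemma:ring-generated-is-definition} finitely many times starting from any initial ring of definition. The integral relation then makes the $A_0$-submodule $M := A_0 + A_0 f + \dots + A_0 f^{n-1}$ stable under multiplication by $f$, so $M$ contains every power $f^k$. To conclude $f \in A^{\circ}$ it therefore suffices to show $M$ itself is bounded in $A$.

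The main obstacle is precisely this boundedness claim --- the statement that every finitely generated $A_0$-submodule of $A$ is bounded. I would handle it by working with a basis of additive-subgroup neighborhoods of $0$ (such as the powers of an ideal of definition $I \subseteq A_0$). Given such an open additive subgroup $U \ni 0$, boundedness of $A_0$ supplies $V' \ni 0$ with $A_0 V' \subseteq U$, and continuity of multiplication by $f^i$ supplies $V_i \ni 0$ with $f^i V_i \subseteq V'$, whence $A_0 f^i V_i \subseteq A_0 V' \subseteq U$. Intersecting the finitely many $V_i$ yields a single $V$ with $A_0 f^i V \subseteq U$ for $i = 0, \dots, n-1$, and then $MV = \sum_{i} A_0 f^i V \subseteq U$ because $U$ is an additive subgroup. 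Hence $M$ is bounded, which completes the proof that $f \in A^{\circ}$.
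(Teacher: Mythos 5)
Your proof is correct and follows essentially the same route as the paper: the union equality via Lemma \ref{lemma:ring-generated-is-definition}, and integral closure by reducing to integrality over a single ring of definition and showing $A_0 + A_0f + \dotsb + A_0f^{n-1}$ is bounded (the paper delegates this last step to Exercise \ref{exercise:power-bounded-integrally-closed}, whose solution you have correctly filled in, with the additive-subgroup argument for bounding the finite sum made explicit).
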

\begin{proof}
Let $A_0$ a ring of definition. Since $A_0$ is bounded and closed under exponentiation, we see $A_0 \subseteq A^{\circ}$. Conversely, if $f \in A^{\circ}$ and $A_0$ is any ring of definition, then $f \in A_0[f]$. By Lemma \ref{lemma:ring-generated-is-definition}, $A_0[f]$ is a ring of definition.

Having shown the equality in the proposition, we have that $A^{\circ}$ is open, and we already indicated why it is a subring. The primary observation to show $A^{\circ}$ is integrally closed is that the proof so far implies that if $f \in A$ is integral over $A^{\circ}$, then $f$ is integral over some ring of definition $A_0$. Given this, one checks that the finite $A_0$-algebra $A_0[f]$ is open and bounded. See Exercise \ref{exercise:power-bounded-integrally-closed}.
\end{proof}

Proposition \ref{proposition:power-bounded-union-rings-of-definition} {\em does not} claim $A^{\circ}$ is bounded. See Exercise \ref{exercise:power-bounded-not-rod}.

\subsubsection{Topologically nilpotent elements}

An element $f \in A$ is called \term{topologically nilpotent} if for all open neighborhoods $U \ni 0$, we have $f^n \in U$ for $n \gg 0$. The sufficiently large ``$\gg$'' depends on $f$ and $U$. As an example, a pseudo-uniformizer in a Tate ring is topologically nilpotent. The formal notation is
\begin{equation*}
A^{\circ\circ} = \{f \in A \mid f \text{ is topologically nilpotent}\}.
\end{equation*}
Assume $A$ is a Huber ring and $A_0$ is a ring of definition. If $f \in A^{\circ\circ}$ is topologically nilpotent, then the powers of $f$ are in the bounded union $A_0 \cup \{1,\dotsc,f^N\}$ for some $N$. Therefore, when $A$ is a Huber ring we have $A^{\circ\circ} \subseteq A^{\circ}$. The analogue of Proposition \ref{proposition:power-bounded-union-rings-of-definition} is the following result.

\begin{proposition}\label{proposition:topologically-nilpotent}
If $A$ is a Huber ring, then
\begin{equation*}
A^{\circ\circ} = \bigcup_{\substack{I \subseteq A\\\textrm{\rm ideal of def.}}} I.
\end{equation*}
Moreover, $A^{\circ\circ}$ is a radical $A^{\circ}$-ideal.
\end{proposition}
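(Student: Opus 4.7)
The plan is to prove the set-theoretic equality first, then derive the $A^{\circ}$-module structure and the radical property as consequences. The easy containment $\bigcup I \subseteq A^{\circ\circ}$ is immediate: if $I \subseteq A_0$ is an ideal of definition and $f \in I$, then $f^n \in I^n$ for all $n$, and since $\{I^n\}_n$ is a neighborhood base of $0$, we conclude $f^n \to 0$.

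For the reverse inclusion, I would take $f \in A^{\circ\circ}$ and build an ideal of definition containing it. Starting from any pair of definition $(A_0, J)$, I use $A^{\circ\circ} \subseteq A^{\circ}$ together with Lemma \ref{lemma:ring-generated-is-definition} to replace $A_0$ by $A_0[f]$, so that I may assume $f \in A_0$. I then claim the finitely generated ideal $I := J + fA_0 \subseteq A_0$ is an ideal of definition. Since $J \subseteq I$, the $I$-adic topology is a priori coarser than the $J$-adic topology on $A_0$; the task is to show that for each $n$ there exists $m$ with $I^m \subseteq J^n$.

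The main technical step, which I expect to be the principal obstacle, is the construction of $m$ using two separate bounds. Expanding, each additive generator of $I^m$ has the form $c \cdot u \cdot f^b$ where $c \in A_0$, $u$ is a product of $a$ generators of $J$, and $a + b = m$. If $a \geq n$, the term already lies in $J^n$. Otherwise $b \geq m - n$, and I would invoke two facts at once. First, $A_0$ is bounded by Proposition \ref{proposition:ring-of-definition-vs-open-bounded}, so applied to the open set $J^n$ this gives $s$ with $A_0 \cdot J^s \subseteq J^n$. Second, topological nilpotence of $f$ gives $B$ with $f^B \in J^s$. Together, $f^B A_0 \subseteq J^n$, so any term with $b \geq B$ lies in $J^n$. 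Thus $m = n + B$ suffices.

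The same construction, applied to finitely many elements simultaneously — enlarging $A_0$ to contain $f_1, \dots, f_r \in A^{\circ\circ}$ and using the ideal $J + \sum_i f_i A_0$ — places any finite subset of $A^{\circ\circ}$ inside a single ideal of definition. This yields the ideal structure immediately: for additive closure, $f, g \in A^{\circ\circ}$ both lie in a single ideal of definition and hence so does $f+g$; for $A^{\circ}$-multiplication, given $f \in A^{\circ}$ and $g \in A^{\circ\circ}$, I choose a ring of definition $A_0 \ni f, g$ and an ideal of definition $I \subseteq A_0$ with $g \in I$, whence $fg \in A_0 \cdot I = I \subseteq A^{\circ\circ}$. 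For the radical property, I would argue directly: if $f \in A^{\circ}$ and $f^n \in A^{\circ\circ}$, then for any open $U \ni 0$, power-boundedness of $f$ yields $V \ni 0$ with $f^k V \subseteq U$ for all $k$; topological nilpotence of $f^n$ yields $N$ with $f^{nN} \in V$; hence $f^k = f^{k-nN} \cdot f^{nN} \in f^{k-nN} V \subseteq U$ whenever $k \geq nN$, so $f \in A^{\circ\circ}$.
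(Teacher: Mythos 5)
Your proof is correct and follows essentially the same route as the paper: absorb $f$ into a ring of definition using $A^{\circ\circ}\subseteq A^{\circ}$, form $I = J + A_0f$, and verify the $I$-adic and $J$-adic topologies agree (the paper gets $I^n\subseteq J$ slightly more quickly by noting $fJ\subseteq J$ and $f^n\in J$, and your appeal to boundedness is redundant since $A_0\cdot J^n\subseteq J^n$ already because $J^n$ is an $A_0$-ideal). The paper defers the radical-ideal claims to an exercise, but your arguments for them are correct and match that exercise's solution in spirit.
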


\begin{proof}
We show topologically nilpotent elements lie in ideals of definition. (The other containment is straightforward.) Suppose $f \in A^{\circ\circ}$. We just explained that $A^{\circ\circ} \subseteq A^{\circ}$, and so we may choose, by Proposition \ref{proposition:power-bounded-union-rings-of-definition}, a pair of definition $(A_0,J)$ such that $f \in A_0$. Since $J$ is open, there exists $n$ such that $f^n \in J$. Now define $I = J + A_0f \subseteq A_0$. Then, $I$ is a finitely generated $A_0$-ideal because $J$ is. We claim $(A_0,I)$ is a pair of definition. First, $I$ is open since it contains $J$. Second, $f^n \in J$ and $fJ \subseteq J$, since $f \in A_0$. Therefore, $I^n \subseteq J$. We have shown the $I$-adic and $J$-adic topologies coincide, finishing the claim.

As in the case of power-bounded elements, we leave the auxiliary claim that $A^{\circ\circ}$ is a radical $A^{\circ}$-ideal as Exercise \ref{exercise:top-nilp-ideal-in-Acirc}. (A hint is provided.)
\end{proof}

Note, the proof makes clear that the notation ``$I \subseteq A$ ideal of def.'' additionally indexes over all rings rings of definition $A_0$, not just the ideals of definition inside some fixed $A_0$.

\subsection{Rational localization}\label{subsection:rational-localization}

We end our introduction to Huber rings by constructing \term{rational localizations}. These localizations are to Huber rings and adic spaces what ring-theoretic localizations are to all rings and schemes. That is, they are used to construct affine subspaces of adic spaces. The reader is invited to later meditate on this analogy in the context of the proof of the adic Nullstellensatz (Theorem \ref{theorem:Aplus-recovery}).

Given a Huber ring $A$, a rational localization $A(\frac{g_1,\dotsc,g_r}{s})$ is another Huber ring depending on elements $g_1,\dotsc,g_r,s \in A$. Localizations appear at the start of  \cite{Huber-Generalization}, where adic spaces are defined. See also Section \ref{subsubsec:rational-localization-huber-pairs} and H\"ubner's lectures \cite{Hubner-AdicSpaces}. You cannot localize with respect to all possible choices of elements. The criterion is that the $A$-ideal $\fraka = As + Ag_1 + \dotsb + Ag_r \subseteq A$ is {\em open}. Before we explain, here are examples:
\begin{enumerate}[label=(\roman*)]
\item Suppose $(A_0,I)$ is a pair of definition and $f_1,\dotsc,f_d \in I$ are $A_0$-generators. Then, $\{g_1,\dotsc,g_r\} = \{f_1,\dotsc,f_d\}$ is valid with any $s$, since $I \subseteq \fraka$.
\item Let $A$ be a Tate ring. Then, the only open $A$-ideal is $\fraka = A$. Therefore, the condition on $g_1,\dotsc,g_r,s$ is that they generate $A$. See Exercise \ref{exercise:ideal-tate}.\label{enum-part:tate-ring-open-observation}
\item Suppose $A = \Zp\llbracket w \rrbracket   $ with the $\lr{p,w}$-adic topology. Then $\{g,s\} = \{w,p\}$ is a valid choice, but $\{g,s\} = \{p,p\}$ is not. See Exercise \ref{exercise:bad-localization}.
\item Localization on $\Cp\lr w$ is related to  $\Cp\lr{\frac{w}{p}}$ in Section  \ref{subsec:final-rational-localization}.
\end{enumerate}

The topological ring $A(\frac{g_1,\dotsc,g_r}{s})$ is defined in two steps, first algebraically and second topologically. We fix a pair of definition $(A_0,I)$. This choice is ultimately immaterial, by the universal property in Theorem \ref{theorem:rational-localization}. 
\begin{enumerate}[label=(RL-\arabic*)]
\item The underlying ring is $A(\frac{g_1,\dotsc,g_r}{s}) = A[\frac{1}{s}]$.\label{enum-part:RL-ring-def}
\item Let $A'=A(\frac{g_1,\dotsc,g_r}{s})$ and $A_0' = A_0[\frac{g_1}{s},\dotsc,\frac{g_r}{s}] \subseteq A'$. We make $A_0'$ a topological ring by giving it the $IA_0'$-adic topology. We then give $A'$ the unique topology where $A_0' \subseteq A'$ is open.
\label{enum-part:RL-top-def}
\end{enumerate}
By \ref{enum-part:RL-ring-def} and \ref{enum-part:RL-top-def}, we have a ring with a topology. But remember now Warning \ref{warning:topological-ring}! We must justify that in fact we have defined a topological ring.

\begin{lemma}\label{lemma:localization-topological}
Let $A$ be a Huber ring. Assume that $g_1,\dotsc,g_r,s \in A$ generate an open $A$-ideal. Then $A(\frac{g_1,\dotsc,g_r}{s})$ is a topological ring.
\end{lemma}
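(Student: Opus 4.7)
The plan is to verify the three topological ring axioms for $A'$ with its declared topology. A neighborhood basis of $0$ in $A'$ is $\{(IA_0')^m\}_{m \geq 0}$, each an additive subgroup, so continuity of addition and negation is automatic. The content is continuity of multiplication, which by translation reduces to showing: for each $f \in A'$ and each $n \geq 0$, there exists $m \geq 0$ with $f \cdot (IA_0')^m \subseteq (IA_0')^n$.

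Writing $f = a/s^k$ with $a \in A$ and $k \geq 0$, the problem splits into two parts. For $f = a \in A$, continuity of multiplication by $a$ on the topological ring $A$ yields some $l$ with $aI^l \subseteq I^n$, whence $a(IA_0')^l = aI^l A_0' \subseteq (IA_0')^n$. For $f = 1/s$, it suffices to exhibit a single $n_0$ with $(IA_0')^{n_0} \subseteq s A_0'$, because then $\tfrac{1}{s}(IA_0')^{n+n_0} \subseteq \tfrac{1}{s}(sA_0')(IA_0')^n = (IA_0')^n$ for every $n$. Iterating handles $1/s^k$, and combining with the first step handles the general case $f = a/s^k$.

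The crux is the inclusion $(IA_0')^{n_0} \subseteq s A_0'$. Openness of $\fraka$ provides $m_0$ with $I^{m_0} \subseteq \fraka$. Fix a finite $A_0$-generating set $p_1, \dotsc, p_M$ of the $A_0$-ideal $I^{m_0}$. Each $p_\ell$ admits a decomposition $p_\ell = \alpha_\ell s + \sum_i \beta_{i\ell} g_i$ with $\alpha_\ell, \beta_{i\ell} \in A$. The tempting identity $p_\ell/s = \alpha_\ell + \sum_i \beta_{i\ell}(g_i/s)$ would land in $A_0'$ if the coefficients lay in $A_0$, but a priori they lie only in $A$; this mismatch between $A$ and $A_0$ is the main obstacle, and the direct manipulation produces only $p_\ell/s \in A \cdot A_0'$, which is strictly larger than $A_0'$ in general.

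The fix is to absorb the coefficients into $A_0$ by multiplying by a further power of $I$. Continuity of multiplication by each of the finitely many $\alpha_\ell, \beta_{i\ell}$ on the topological ring $A$ produces a single $k$ with $\alpha_\ell I^k \subseteq A_0$ and $\beta_{i\ell} I^k \subseteq A_0$ for all indices. Then any $h \in I^{m_0 + k} = I^{m_0} \cdot I^k$, upon expanding $h_j \in I^{m_0}$ in terms of the $p_\ell$ and grouping the $I^k$-factors onto the coefficients, can be rewritten as $h = s\alpha' + \sum_i g_i \beta'_i$ with $\alpha', \beta'_i \in A_0$. Dividing by $s$ now gives $h/s = \alpha' + \sum_i \beta'_i (g_i/s) \in A_0 + A_0 \cdot A_0' \subseteq A_0'$, so $I^{m_0 + k} \subseteq s A_0'$ and hence $(IA_0')^{m_0+k} = I^{m_0+k} A_0' \subseteq s A_0'$, completing the argument with $n_0 = m_0 + k$.
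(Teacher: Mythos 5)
Your proof is correct and follows essentially the same route as the paper: reduce to continuity of multiplication by $\tfrac{1}{s}$, use openness of $\langle s,g_1,\dotsc,g_r\rangle$ to express generators of a power of $I$ in terms of $s$ and the $g_i$, then absorb the $A$-coefficients into $A_0$ by multiplying by a further power of $I$. The only difference is presentational: the paper writes out only the Tate-ring case in the text and defers the general case to an exercise, whereas you carry out the general argument directly.
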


The proof of Lemma \ref{lemma:localization-topological} uses that $I$ is finitely generated over $A_0$, which we have not {\em really} used until now. The only result where we explicitly used the property was Proposition \ref{proposition:ring-of-definition-vs-open-bounded}. However, the issue there is preserving the finitely generated property while switching ideals of definition. The same result holds assuming only that $A$ satisfies Definition \ref{definition:huber-ring}\ref{enum-part:huber-defn-adic}. (We thank Kalyani Kansal for this observation.)
\begin{proof}[Proof of Lemma \ref{lemma:localization-topological}]
The first step is a general simplification. Then, we write out the argument only in the case that $A$ is a Tate ring. The main reason is to decrease the notations and to generate a proof that is simpler to recall.

For notation, define $A' = A(\frac{g_1,\dotsc,g_r}{s})$, $A_0' = A_0[\frac{g_1}{s},\dotsc,\frac{g_r}{s}]$ and $I' = IA_0'$. We give $A_0'$ the $I'$-adic topology and declare $A_0' \subseteq A'$ open. If $I$ is replaced by a different ideal of definition in $A_0$, the topology on $A'$ does not change.

The topology on $A'$ is built by declaring a topological ring $A_0' \subseteq A'$ to be open. By Exercise \ref{exercise:subring-Iadic}, we must only show multiplication by $f'$ is continuous on $A'$, for all $f' \in A'$. Since $I'$ is generated by $I$ over $A_0'$, the explicit claim is that for all $f' \in A'$, there exists an $n$ such that $f' I^n \subseteq A_0'$. If $f ' = f \in A$, this is clear since $A_0$ is a topological ring for the $I$-adic topology. A general element of $A'$ is $f' = f/s^N$ for some $N$. Therefore, only the case $f'=\frac{1}{s}$ is significant. Thus, we want $\frac{1}{s}I^n \subseteq A_0'$ for some $n$.

We now assume that $A$ is a Tate ring. Choose a pseudo-uniformizer $\varpi \in A$ that belongs to $A_0$. By Section \ref{subsec:tate-rings}, we know $A = A_0[\frac{1}{\varpi}]$ and we may assume $I = \varpi A_0$. As mentioned in \ref{enum-part:tate-ring-open-observation} prior to the lemma, since $A$ is a Tate ring, we are assuming that $g_1,\dotsc,g_r,s$ generate the {\em unit} ideal. So, $\varpi$ may be expressed as
\begin{equation}\label{eqn:varpi-equation}
\varpi = a_0s + a_1 g_1 + \dotsb + a_rg_r \;\;\;\; (a_i \in A).
\end{equation}
Since $A = A_0[\frac{1}{\varpi}]$, there is a positive integer $n$ such that $a_i \varpi^{n-1} \in A_0$ for all $i$. By \eqref{eqn:varpi-equation}, we then have that
\begin{equation}\label{eqn:A0-subopen}
\varpi^{n} \in A_0s + A_0g_1 + \dotsb + A_0g_r.
\end{equation}
And we are done now because
\begin{equation*}
\frac{1}{s}I^{n} = \frac{1}{s}\varpi^{n}A_0 \subseteq A_0[\frac{g_1}{s},\dotsc,\frac{g_r}{s}] = A_0'
\end{equation*}

In general, the open-ness of $As + Ag_1 + \dotsb + Ag_r$ leads to expressions similar to \eqref{eqn:varpi-equation}, with $\varpi$ is replaced by any one of a finite number of $A_0$-generators of an ideal of definition $I$. The conclusion, analogous to \eqref{eqn:A0-subopen}, is that $A_0s + A_0g_1 + \dotsb + A_0g_r$ is open. Fill in the details and finish the argument as Exercise \ref{exercise:local-topological-ring}. 
\end{proof}

\begin{theorem}[Rational localization]\label{theorem:rational-localization}
Let $A$ be a Huber ring and assume that $g_1,\dotsc,g_r,s \in A$ generate an open ideal of $A$. 
\begin{enumerate}[label=(\alph*)]
\item For $(A_0,I)$ a fixed pair of definition, $A(\frac{g_1,\dotsc,g_r}{s})$ is a Huber ring.\label{enum-part-theorem:RL-huber}
\item The natural map $A \rightarrow A(\frac{g_1,\dotsc,g_r}{s})$ is initial among continuous morphisms $A \rightarrow B$, for $B$ a Huber ring, for which the image of $s$ is invertible and the image of each $g_i/s$ is power-bounded.\label{enum-part-theorem:RL-univ-property}
\end{enumerate}
\end{theorem}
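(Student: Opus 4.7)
For part (a), the plan is direct verification that $(A_0', IA_0')$ is a pair of definition for the topological ring $A' := A(\frac{g_1,\dotsc,g_r}{s})$, where $A_0' = A_0[\frac{g_1}{s}, \dotsc, \frac{g_r}{s}]$. By construction \ref{enum-part:RL-top-def}, the subring $A_0'$ is open in $A'$ and carries the $(IA_0')$-adic topology; and if $f_1, \dotsc, f_d$ generate $I$ as an $A_0$-ideal, they also generate $IA_0'$ as an $A_0'$-ideal, so $IA_0'$ is finitely generated. Combined with Lemma \ref{lemma:localization-topological}, which already guarantees that $A'$ is a topological ring, this establishes (a).

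For part (b), uniqueness is free: since $A' = A[\frac{1}{s}]$ as a ring, the algebraic universal property of $A[\frac{1}{s}]$ forces the unique ring-map extension $\tilde\varphi: A' \to B$ to be $\tilde\varphi(f/s^n) = \varphi(f)\varphi(s)^{-n}$. It therefore suffices to show $\tilde\varphi$ is continuous; the initial-object formulation in (b) then follows at once.

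Continuity is the main content. Fix a pair of definition $(B_0, J)$ for $B$ and set $h_i := \varphi(g_i)\varphi(s)^{-1} \in B^\circ$. Applying Lemma \ref{lemma:ring-generated-is-definition} iteratively, $B_0' := B_0[h_1, \dotsc, h_r]$ is again a ring of definition for $B$, and hence bounded by Proposition \ref{proposition:ring-of-definition-vs-open-bounded}. Given a basic open neighborhood $V = J^k \ni 0$ in $B$, boundedness of $B_0'$ supplies some $k'$ with $B_0' \cdot J^{k'} \subseteq J^k$, and continuity of $\varphi$ at zero supplies an $n$ with $\varphi(I^n) \subseteq J^{k'}$. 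The key observation is that $I^n A_0'$ is additively generated by elements of the form $f \prod_i (g_i/s)^{n_i}$ with $f \in I^n$ and $n_i \geq 0$: any factor $a \in A_0$ can be absorbed into $f$, because $I^n$ is already an $A_0$-ideal. The image of such a generator under $\tilde\varphi$ is $\varphi(f) \prod_i h_i^{n_i} \in \varphi(I^n) \cdot B_0' \subseteq J^{k'} \cdot B_0' \subseteq J^k$, so $\tilde\varphi(I^n A_0') \subseteq V$, as required.

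The subtlest step is the continuity argument. Naively one wants a ring of definition of $B$ containing both $\varphi(A_0)$ and all the $h_i$, but continuity of $\varphi$ alone does not force $\varphi(A_0)$ to lie in any ring of definition of $B$. The resolution is the absorption observation above, which sidesteps $\varphi(A_0)$ entirely by exploiting that $I^n$ is an $A_0$-ideal. With that in hand, the finitely many power-bounded elements $h_i$ can be accommodated using Lemma \ref{lemma:ring-generated-is-definition}, and the remainder is routine manipulation of pairs of definition.
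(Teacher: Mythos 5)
Your proposal is correct and follows the same overall strategy as the paper: part (a) is the pair of definition $(A_0',IA_0')$ plus Lemma \ref{lemma:localization-topological}, and part (b) factors through the algebraic localization and then checks continuity of $\psi$ by showing $(I')^n=I^nA_0'$ lands in a prescribed open subgroup of $B$. The one place you diverge is the mechanism for controlling the monomials $\prod_i h_i^{n_i}$: the paper invokes power-boundedness of each $h_i$ directly to produce a $V$ with $h_i^mV\subseteq U$, whereas you adjoin the $h_i$ to a ring of definition of $B$ via Lemma \ref{lemma:ring-generated-is-definition} and use boundedness of $B_0[h_1,\dotsc,h_r]$. Your device is arguably tidier --- it handles mixed monomials in several $h_i$ at once, which the paper's phrasing ``$f(\tfrac{g_i}{s})^m$'' glosses over --- but it is slightly less general: it needs $B$ to actually be a Huber ring, while the paper's argument (as it remarks afterward) works for any target with a neighborhood basis of zero by additive subgroups. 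One small omission: to conclude that $A\rightarrow A'$ is \emph{initial} among such maps you should also record that $\iota$ itself qualifies, i.e.\ that $\iota$ is continuous (because $I\subseteq\iota^{-1}(IA_0')$) and that $s$ is invertible and each $g_i/s$ power-bounded in $A'$ (the latter since $g_i/s\in A_0'$); these are one-line checks but part of the statement.
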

\begin{proof}
We proved in Lemma \ref{lemma:localization-topological} that  $A'=A(\frac{g_1,\dotsc,g_r}{s})$ is a topological ring. By construction it has a ring of definition $A_0'=A_0[\frac{g_1}{s},\dotsc,\frac{g_r}{s}]$ with finitely generated ideal of definition $IA_0'$. This proves that $A'$ is a Huber ring, possibly depending on the choice of $(A_0,I)$. That choice disappears once we prove the universal property \ref{enum-part-theorem:RL-univ-property}, since the property itself makes no reference to $(A_0,I)$.

For \ref{enum-part-theorem:RL-univ-property}, the  localization map $\iota: A \rightarrow A'=A[\frac{1}{s}]$ is continuous, since $I \subseteq \iota^{-1}(IA_0')$. Moreover, $s$ is a unit in $A'$ and $\frac{g_j}{s}$ is power-bounded, since it lies in the ring of definition $A_0'$ of $A'$ (recall Proposition \ref{proposition:power-bounded-union-rings-of-definition}). Suppose $\varphi: A \rightarrow B$ is given as in \ref{enum-part-theorem:RL-univ-property}. Since $\varphi(s)$ is invertible in $B$, there is a natural factorization 
\begin{equation*}
\xymatrix{
A \ar[r]^-{\varphi} \ar[d]_{\iota} & B\\
A' \ar@{.>}[ur]_-{\psi}
}
\end{equation*}
 at the level of $A$-algebras. We must check $\psi$ is continuous. Fix an open neighborhood $U \ni 0$ in $B$, which we assume is an additive subgroup. Since $\frac{\varphi(g_i)}{\varphi(s)}$ is power-bounded in $B$ for all $i$, there exists an open neighborhood $V \ni 0$ for which
\begin{equation*}
\psi(\frac{g_i}{s})^mV = (\frac{\varphi(g_i)}{\varphi(s)})^m V \subseteq U
\end{equation*} 
for all $m \geq 0$ and all $i$. Since $\varphi$ is continuous, $I^n \subseteq \varphi^{-1}(V)$ for some $n$. Since $(I')^n = I^n A_0'$ is spanned by elements of the form $f(\frac{g_i}{s})^m \in A'$ for $f \in I^n$ and $U$ is an additive subgroup, it follows that $(I')^n \subseteq \psi^{-1}(U)$.
\end{proof}

We make two complementary remarks. First, the proof clarifies that the universal property is valid as long as $B$  possesses a neighborhood basis of zero consisting of additive subgroups ($B$ is a ``non-Archimedean ring''). Second, some authors assume in Theorem \ref{theorem:rational-localization} that $g_1,\dotsc,g_r$ generate an open ideal. There is practically no difference, since the definitions \ref{enum-part:RL-ring-def} and \ref{enum-part:RL-top-def} make $A(\frac{g_1,\dotsc,g_r}{s}) = A(\frac{g_1,\dotsc,g_r,s}{s})$.

\subsection{Example:\ Localizing $\Cp\lr w$}\label{subsec:final-rational-localization}
The final section of this lecture connects rational localization on $\Cp\lr w$ to the $\Cp$-affinoid algebra $\Cp\lr{\frac{w}{p}}$. The discussion is simplified by focusing first on $\Cp[w]$.

Consider $A = \Cp[w]$ as a topological ring with the topology induced from the Gauss norm. That is, if $f = a_0 + a_1w + \dotsb + a_nw^n$, then
\begin{equation*}
\norm{f}_{\Gauss} = \max_{i\geq 0} \norm{a_i}_p.
\end{equation*}
Just like $\Cp\lr w$, we see $A$ is a Tate ring with pseudo-uniformizer $p \in A$. A ring of definition is $A_0 = \OCp[w]$ and an ideal of definition is $p\OCp[w]$.

Now localize with $g = w$ and $s = p$. The hypothesis of Theorem \ref{theorem:rational-localization} is satisfied because $s$ is a unit in $A$. As a ring,
\begin{equation*}
A(\frac{w}{p}) = \Cp[w][\frac{1}{p}] = \Cp[w].
\end{equation*}
So, $A = A(\frac{w}{p})$, still. But the topology is new! The original topology has a basis around zero given by $p^n\OCp[w]$. The new topology has a basis around zero given by
\begin{equation}\label{eqn:open-balls-localization}
p^n A_0[\frac{w}{p}] = p^n\OCp[w,\frac{w}{p}] = p^n\OCp[\frac{w}{p}].
\end{equation}
Here is a concrete difference. In the Gauss norm topology,  $w$ {\em is} power bounded but {\em not} topologically nilpotent since $w^n \not\in p\OCp[w]$ for any $n$. Yet, in $A(\frac{w}{p})$ we have
\begin{equation*}
w^n = p^n (\frac{w}{p})^n \in p^nA_0[\frac{w}{p}]
\end{equation*}
Therefore, $w$ {\em is} topologically nilpotent in $A(\frac{w}{p})$.

We chose $A$ to be the polynomial ring so that \eqref{eqn:open-balls-localization} was most clear. The connection to affinoid algebras is via completion. The main point is that $\Cp\lr w$ is the completion of $\Cp[w]$ for the Gauss norm. As Exercise \ref{exercise:norm-localization-check}, the reader can check that the topology on $A(\frac{w}{p})=\Cp[w]$ is the {\em same} as the topology induced by the norm
\begin{equation*}
\val{f}_{\frac{1}{p}} := \max_{i\geq 0} \norm{a_i}_p p^{-i},
\end{equation*}
and $\Cp\lr{\frac{w}{p}}$ is the completion of $\Cp[w]$ for $\bval_{\frac{1}{p}}$. (The completions here are all with respect to norms. We revisit more general completions of Huber rings in Section \ref{subsubsec:completion-huber-pairs}.)

\subsection*{Section \thesection~ Exercises}

\begin{exercise}\label{exercise:topological-ring-check}
Prove that the rings with topology described in Examples \ref{example:ring}-\ref{example:one-variable-tate-algebra} are all topological rings.
\end{exercise}

\begin{exercise}\label{exercise:subring-Iadic}
Let $A$ be a ring and $A_0 \subseteq A$ a subring and $I$ an $A_0$-ideal. Consider $A_0$ as a topological ring with the $I$-adic topology as in Example \ref{example:ring}. Define a topology on $A$ by declaring $A_0 \subseteq A$ to be open. Show that the following are equivalent:
\begin{enumerate}[label=(\roman*)]
\item $A$ is a topological ring.
\item For all $f \in A$, multiplication by $f$ is continuous on $A$.
\item For all $f \in A$, there exists an $n \gg 0$ such that $fI^n \subseteq A_0$.
\end{enumerate}
\end{exercise}

\begin{solution}
See \cite[pg. 274]{Bourbaki-GeneralTopology-Ch1-4}.
\end{solution}

\begin{exercise}\label{exercise:bad-topological-rings}
These examples have the same flavor (with similar solutions). In each, we give a ring $A$ containing a topological ring $A_0$. The exercise is to check that declaring $A_0 \subseteq A$ to be open will {\em not} make $A$ into a topological ring.
\begin{enumerate}[label=(\alph*)]
\item Let $A_0=\OCp$ with the $\frakm$-adic topology. Show that declaring $\OCp \subseteq \Cp$ to be open will not make $\Cp$ a topological field.
\item Consider $\Zp\llbracket w \rrbracket   $ with the $\langle p,w\rangle$-adic topology. Show that if one declares $\Zp\llbracket w \rrbracket    \subseteq \Zp\llbracket w \rrbracket   [\frac{1}{p}]$ is open, then $\Zp\llbracket w \rrbracket   [\frac{1}{p}]$ will not be a topological ring.
\end{enumerate}
\end{exercise}

\begin{solution}
For each question we use Exercise \ref{exercise:subring-Iadic}.
\begin{enumerate}[label=(\alph*)]
\item Consider multiplication by $\frac{1}{3} : \Q \rightarrow \Q$. Then $\frac{1}{3}2^n \Z \not\subseteq \Z$ for any $n \geq 0$.
\item Consider multiplication by $\frac{1}{p} : \Cp \rightarrow \Cp$. It is clear that $\frac{1}{p} \frakm_{\OCp} \not\subseteq \OCp$ since $\sqrt{p} \in \frakm_{\OCp}$ but $\frac{1}{\sqrt{p}} \not\in \OCp$. Since $\frakm_{\OCp}^n = \frakm_{\OCp}$ for all $n > 0$, we see multiplication by $\frac{1}{p}$ will not be continuous.
\item Let $\frakm = \langle p,w\rangle$ be the maximal ideal of $A_0$. Then, for each $n$ we have $w^n \in \frakm^n$ but $\frac{1}{p}w^n \not\in A_0$. Therefore multiplication by $\frac{1}{p}$ will not be continuous on $A = \Zp\llbracket w \rrbracket   [\frac{1}{p}]$. 
\end{enumerate}
\end{solution}

\begin{exercise}\label{exercise:power-bounded}
Let $A = \Cp\lr w$.

\begin{enumerate}[label=(\alph*)]
\item Show that $w$ is power-bounded but not topologically nilpotent.
\item Show that, in fact, $A^{\circ} = \OCp\lr w$, and
\begin{equation*}
A^{\circ\circ} = \frakm_{\OCp} \lr w = \left\{\sum_{i=0}^{\infty} a_i w^i \in A \mid a_i \in \frakm_{\OCp} \text{ for all $i$}\right\}.
\end{equation*}
\end{enumerate}
\end{exercise}

\begin{exercise}\label{exercise:bounded-exercises}
Let $A$ be a topological ring and $X$ and $Y$ subsets of $A$.
\begin{enumerate}[label=(\alph*)]
\item Show that if $X \subseteq Y$ and $Y$ is bounded, then $X$ is bounded.\label{exercise-part:containment}
\item Show that if $A$ is a Huber ring and $X$ and $Y$ are bounded, then so is $X \cdot Y$.\label{exercise-part:subgroup-generated-bounded}
\item Suppose $A$ is a Huber ring. Show that $X\subseteq A$ is bounded if and only if for any ideal of definition $I$, there exists an $n \geq 1$ such that $XI^n \subseteq I$.\label{exercise-part:huber-arbitrary}
\end{enumerate}
\end{exercise}

\begin{solution}
\begin{enumerate}[label=(\alph*)]
\item Given $U$ if $V$ witnesses $YV \subseteq U$ then $XV \subseteq U$ also.
\item Let $U$ be an open {\em subgroup}. Since $Y$ is bounded, there exists an open subgroup $V$ such that $xv \in U$ for all $v \in V$ and $x \in X$. Similarly, there exists $W$ an open subgroup sch that $yw \in V$ for all $y \in Y$ and $w \in W$. Thus if $\sum x_i y_i \in X\cdot Y$ then
\begin{equation*}
(\sum x_i y_i)W \subseteq \sum x_i V \subseteq U,
\end{equation*}
since these are all subgroups.
\item If $X$ is bounded, then for any pair of definition $(A_0,I)$ there exists an $n$ such that $XI^n \subseteq I$. Conversely, suppose that we have the pair $(A_0,I)$. It suffices to show that $XV \subseteq J$ where $J$ is any ideal of definition. Choose $d \gg 0$ such that $I^d \subseteq J$. Then,
\begin{equation*}
X I^{nd} = X I^n I^n \dotsb I^n \subseteq I^d \subseteq J.
\end{equation*}
Thus we can take $V = I^{nd}$.
\end{enumerate}
\end{solution}

\begin{exercise}\label{exercise:rings-of-definition-filtered}
Show that if $A$ is a Huber ring and $A_0$ and $A_0'$ are rings of definition, then there exists a ring of definition $A_0''$ containing both.
\end{exercise}

\begin{solution}
Let $A_0''$ be
\begin{equation*}
A_0'' = A_0 \cdot A_0' = \{a_1a_1' + \dotsb + a_m a_m' \mid a_j \in A_0 \text{ and } a_j' \in A_0'\}.
\end{equation*}
This is a subring by the distributive law. It is open since it contains $A_0$, which is open. Each of $A_0$ and $A_0'$ are bounded, and so $A_0''$ is bounded by Exercise \ref{exercise:bounded-exercises}\ref{exercise-part:subgroup-generated-bounded}. Finally, we have shown $A_0''$ is open and bounded and therefore a ring of definition by Proposition \ref{proposition:ring-of-definition-vs-open-bounded}.
\end{solution}

\begin{exercise}\label{exercise:power-bounded-integrally-closed}
Show that if $A$ is a Huber ring, then $A^{\circ}$ is integrally closed in $A$.
\end{exercise}
\begin{hint}
A sketch is given at the end of the proof of Proposition \ref{proposition:power-bounded-union-rings-of-definition}.
\end{hint}

\begin{solution}
Let $f \in A$ integral over $A^{\circ}$. So, there is some relation 
\begin{equation*}
f^n + a_{n-1}f^{n-1} + \dotsb + a_0 = 0
\end{equation*}
with $a_j \in A^{\circ}$. By Proposition \ref{proposition:power-bounded-union-rings-of-definition} and Exercise \ref{exercise:rings-of-definition-filtered}, there is a single $A_0$ such that $a_j \in A_0$ for all $j$. Therefore $f$ is integral over $A_0$. We claim $A_0[f]$ is a ring of definition. It is clear that $A_0[f]$ is open, since $A_0 \subseteq A_0[f]$. To show that it is bounded, we note that $A_0[f] = A_0 + A_0 f + \dotsb + A_0 f^{n-1}$, by the integral relation. Since each $A_0f^j$ is bounded, we see $A_0[f]$ is bounded. Therefore $A_0[f]$ is a ring of definition by Proposition \ref{proposition:ring-of-definition-vs-open-bounded}. We conclude $f \in A^{\circ}$ by Proposition \ref{proposition:power-bounded-union-rings-of-definition}.
\end{solution}

\begin{exercise}\label{exercise:top-nilp-ideal-in-Acirc}
Let $A$ be a Huber ring. 
\begin{enumerate}[label=(\alph*)]
\item Show that $f \in A$ is topologically nilpotent if and only if there exists an ideal of definition $I \subseteq A$ and an integer $n \geq 1$ such that $f^n \in I$.\label{exercise-part:top-nilp-ideal-in-Acirc-old-lemma}
\item Show that $A^{\circ\circ} \subseteq A^{\circ}$ is a radical $A^{\circ}$-ideal.\label{exercise-part:top-nilp-ideal-in-Acirc-radical-ideal}
\end{enumerate}
\end{exercise}

\begin{hint}
In \ref{exercise-part:top-nilp-ideal-in-Acirc-old-lemma}, there exists $n$ such that $f^n \in I$ and $m$ such that $f^j I^m \subseteq I$ for all $j\leq n-1$. Then $f^N \in I$ for $N \geq n(m+1)$. For \ref{exercise-part:top-nilp-ideal-in-Acirc-radical-ideal}, the tricky part of the ideal property is showing $A^{\circ\circ}$ is closed for addition. For that, it may be helpful to note that if $f,g$ are topologically nilpotent, then they lie in a common ring of definition $A_0$ (Exercise \ref{exercise:rings-of-definition-filtered}).
\end{hint}

\begin{solution}
\begin{enumerate}[label=(\alph*)]
\item The step with content is showing that if $f^n \in I$, then $f$ is topologically nilpotent. We learned this trick from the proof of \cite[Proposition 8.3.5]{Conrad-PerfectoidSeminarNotes}.

Suppose $f \in A$ and $I$ is an ideal of definition and $n \geq 1$ and $f^n \in I$. We need to show $f$ is topologically nilpotent we just need to show $f^N \in I$ for all $N \geq n_0$ for some $n_0$.  Since $A$ is a topological ring, we may choose an integer $m\geq 1$ such that $f^j I^m \subseteq I$ for $j=0,1,\dotsc,n-1$. Then, for $N \geq n(m+1)$ we write
\begin{equation*}
N = nq + j \;\;\;\;\; (m < q,\; 0 \leq j < n).
\end{equation*}
Then
\begin{equation*}
f^N = f^{nq+j} = f^j f^{nm} f^{n(q-m)}. 
\end{equation*}
Since $q-m > 0$ we have $f^{n(q-m)} \in I$ and certainly $f^{nm} \in I^m$. Thus, we see that
\begin{equation*}
f^N \in f^j I^m I \subseteq I.
\end{equation*}
Therefore, we have shown $f^N \in I$ for $N \geq n_0$ where $n_0 = n(m+1)$. The proof is complete.
\item First suppose that $f$ is topologically nilpotent and $g$ is power-bounded. Given $U$, there exists a $V$ such that $g^m V \subseteq U$ for all $m\geq 1$. Since $f$ is topologically nilpotent we see that $f^m \in V$ for $m \gg 0$. Therefore, for $m \gg 0$ we have $(fg)^m \in U$. Since $U$ is arbitrary, we have $fg \in A^{\circ\circ}$.

Now suppose $f,g$ are topologically nilpotent. By Exercise \ref{exercise:rings-of-definition-filtered} we can assume $f,g \in A_0$ for a fixed ring of definition. Let $I$ be an ideal of definition for $A_0$. Since $f,g$ are topologically nilpotent there exists some $m$ such that $f^m$ and $g^m$ lie in $I$. But then
\begin{equation*}
(f+g)^{2m} \in A_0 f^m + A_0 g^m \in I
\end{equation*}
as well. Therefore, by Lemma \ref{lemma:topologically-nilpotent-short-check} we have that $f+g$ is topologically nilpotent.
\end{enumerate}
\end{solution}

\begin{exercise}\label{exercise:power-bounded-not-rod}
Let $A = \Qp[\varepsilon] = \Qp \oplus \Qp\varepsilon$ with $\varepsilon^2 = 0$ and the $p$-adic topology induced on each factor. Show that $A$ is a Huber ring and
\begin{equation*}
A^{\circ} = \Zp \oplus \Qp\varepsilon.
\end{equation*}
Conclude that $A^{\circ}$ need not be a ring of definition.
\end{exercise}

\begin{solution}
The key point is that if $a + b \varepsilon \in A$ then $(a+b)^n = a^n + na^{n-1}b \varepsilon$.
\end{solution}

\begin{exercise}\label{exercise:ideal-tate}
Let $A$ be a Tate ring.
\begin{enumerate}[label=(\alph*)]
\item Show that any ideal of definition for $A$ contains a pseudo-uniformizer.\label{exercise-part:ideal-tate-every-ideal}
\item Show that if $\fraka$ is an {\em open} ideal of $A$, then $\fraka = A$. \label{exercise-part:ideal-tate-open-ideal}
\end{enumerate}
\end{exercise}

\begin{solution}
Part (a). The way I defined it, a pseudo-uniformizer $\varpi$ is a unit for which $\varpi \in J$ for some ideal of definition $J$. Given any $I$ there is some power $J^d$ such that $J^d \subseteq I$. Then $\varpi^d \in I$ and $\varpi^d$ is also a unit. Therefore $I$ contains a pseudo-uniformizer.

Part (b). If $\fraka$ is open then $\fraka \supseteq I$ for some ideal of definition $I$. By part (a), there exists a pseudo-uniformizer in $I$, which is then a unit in $A$. Therefore $\fraka$ contains a unit.
\end{solution}

\begin{exercise}\label{exercise:bad-localization}
Let $A = \Zp\llbracket w \rrbracket   $ with the $\frakm$-adic topology where $\frakm = \langle w,p\rangle$. This is a Huber ring as in Example \ref{example:ring}. 
\begin{enumerate}[label=(\alph*)]
\item Show that $g_1 = p$ and $s = p$ is invalid for the hypotheses in Theorem \ref{theorem:rational-localization}.
\item Try to define $A(\frac{p}{p})$ as in \ref{enum-part:RL-ring-def} and \ref{enum-part:RL-top-def}. Confirm that you {\em do not} get a topological ring.
\item Re-affirm directly that your objection disappears for the ring $A(\frac{w}{p})$.
\end{enumerate}
\end{exercise}
\begin{solution}
Part (a). The element $f_1$ does not generate an open ideal. Indeed, $\lr{w,p}^n \subseteq p\Zp\llbracket w \rrbracket   $ for any $n$.

Part (b). The construction gives you $A' = A(\frac{p}{p}) = A[\frac{1}{p}] = \Zp\llbracket w \rrbracket   [\frac{1}{p}]$ with neighborhoods of zero given by $\lr{w,p}^n\Zp\llbracket w \rrbracket   $ Then, multiplication by $\frac{1}{p}$ on $A'$ fails to be continuous because $\frac{1}{p}\lr{w,p}^n\Zp\llbracket w \rrbracket    \not\subseteq \Zp\llbracket w \rrbracket   $ for any $n$.

Part (c). The objection disappears because $A'$ is the same ring but the open sets becomes $\lr{w,p}^n\Zp\llbracket w \rrbracket   [\frac{w}{p}]$ and now $\frac{1}{p}\lr{w,p}\Zp\llbracket w \rrbracket    \subseteq \Zp\llbracket w \rrbracket   [\frac{w}{p}]$.
\end{solution}

\begin{exercise}\label{exercise:local-topological-ring}
Let $A$ be a Huber ring. Suppose $g_1,\dots,g_r,s \in A$ generate an open ideal. Show that $A(\frac{g_1,\dotsc,g_r}{s})$ is a topological ring, as promised in Lemma \ref{lemma:localization-topological}.
\end{exercise}
\begin{hint}
A hint is given at the end of the proof of Lemma \ref{lemma:localization-topological} in the text.
\end{hint}

\begin{solution}
Let $A$ be a Huber ring. We assume $\fraka = As + Ag_1 + \dotsb +  Ag_r$ is open in $A$. The generalization of the prior argument is that \eqref{eqn:A0-subopen} becomes the statement that actually the $A_0$-submodule $A_0s + A_0g_1 + \dotsb + A_0g_r$ is also open in $A$. To prove this, we simplify notation and replace $I$ by some fixed power in order to assume $I \subseteq \fraka$ (generalizing \eqref{eqn:varpi-equation} above). Fix a finite list $f_1,\dotsc,f_d$ of $A_0$-generators of $I$. Since $I \subseteq \fraka$ we can find $a_{ik} \in A$ such that
\begin{equation}\label{eqn:gk-eqn}
f_k = a_{0k}s + a_{1k}g_1 + \dotsb + a_{rk}g_r
\end{equation}
for all $1 \leq k \leq d$. There are only a finite number of $a_{ik}$ and so there exists a single power $m\geq 0$ such that $a_{ik}I^m \in A_0$ for all $i,k$. This shows $I^{m+1}$ is contained in $A_0s + A_0g_1 + \dotsb + A_0 g_r$. Now we have from \eqref{eqn:gk-eqn} that
\begin{equation*}
\frac{1}{s}I^{m+1} \subseteq A_0[\frac{g_1}{s},\dotsc,\frac{g_r}{s}].
\end{equation*}
By \eqref{eqn:key-1/s-containment}, we have shown $A'$ is a topological ring.
\end{solution}

\begin{exercise}\label{exercise:norm-localization-check}
Show that the topology on $\Qp[w](\frac{w}{p})$ in Section \ref{subsec:final-rational-localization} is the topology induced on $\Qp[w]$ given by the norm
\begin{equation*}
\val{a_0+a_1w + \dotsb + a_dw^d} = \max_i \norm{a_i}_p p^{-i}.
\end{equation*}
\end{exercise}
\begin{solution}
The topology is given by neighborhoods by $J^n = p^n \Zp[\frac{w}{p}]$. If we take $f = a_0 + a_1w + \dotsb + a_d w^d$ then we can re-write this as
\begin{equation*}
f = b_0 + b_1\frac{w}{p} + \dotsb + b_d(\frac{w}{p})^d
\end{equation*}
where $b_i = p^i a_i$. Therefore 
\begin{equation*}
f \in J^n \iff b_i \in p^n\Zp \iff \norm{a_i}_p p^{-i} \leq p^{-n} \;\;\;\; (\text{for all $i$}).
\end{equation*}
\end{solution}

\section{Valuation theory}\label{section:valuation}

The second lecture introduces valuation theory. The primary goal is discussing continuous valuations on topological rings. One highlight is a simple criterion (Proposition \ref{proposition:contiunity-criteria}) for a valuation on a Huber ring to be continuous. The final discussion will focus on the continuous valuation spectrum for the Tate algebra $\Cp\lr w$.

\subsection{Definition}

The symbol $\Gamma$ refers to a \term{totally ordered abelian group}. We write the group operation multiplicatively. By definition, $\Gamma$ is an abelian group with an order relation $\leq$ satisfying the following axioms.
\begin{enumerate}[label=(\alph*)]
\item For all $\gamma_1,\gamma_2 \in \Gamma$, either $\gamma_1 \leq \gamma_2$  or $\gamma_2 \leq \gamma_1$, and both occur if and only if $\gamma_1 = \gamma_2$ (``totally'' ordered).
\item If $\gamma \in \Gamma$, then $\gamma' \mapsto \gamma \gamma'$ preserves $\leq$.
\end{enumerate}
From the first axiom, it makes sense to write $\gamma_1 < \gamma_2$ provided $\gamma_1 \leq \gamma_2$ and $\gamma_1 \neq \gamma_2$. We always extend $\Gamma$ to the totally ordered monoid $\Gamma \cup \{0\}$. The multiplicative structure is given by $0 \cdot \gamma = 0 = \gamma \cdot 0$, for all $\gamma \in \Gamma$. The order extends by $0 < \gamma$ for all $\gamma \in \Gamma$.

\begin{definition}\label{definition:valuation}
Let $A$ be a ring. A \term{valuation} on $A$ is a function $\bval : A \rightarrow \Gamma\cup \{0\}$ such that $\val{0} = 0$ and $\val{1} = 1$, and for all $f,g \in A$ we have
\begin{align*}
\val{fg} &= \val{f}\val{g}  & \text{($\bval$ is multiplicative);}\\
\val{f+g} &\leq \max\{\val{f},\val{g}\}  & \text{(the ultrametric triangle inequality).}
\end{align*}
\end{definition}
Note, we are using valuation as a term generalizing an ultrametric norm, as opposed to something like the $p$-adic valuation $v_p : \Qp \rightarrow \Z \cup \{\infty\}$. This is a common choice of language in this research area. In If we were to restrict to valuations valued in $\Rplus$, we might prefer the terminology semi-norm. (``Semi-'' because we allow for $|f| = 0$ even if $f \neq 0$.) The terminology ``valuation'' seems to be preferred because $\Gamma$ may in fact not embed into $\Rplus$. Finally, confusion may arise since we often consider a ring $A$ topologized by a norm. To avoid confusion, we reserve $\bval$ for a valuation on $A$, while $\bnorm$ will denote a fixed norm on $A$.

\subsection{Examples}\label{subsection:valuation-examples}

\begin{example}
Let $A$ be an integral domain. The \term{trivial valuation} $\bval_{\triv}$ is 
\begin{equation*}
\val{a}_{\triv} = \begin{cases}
1 & \text{if $a \neq 0$};\\
0 & \text{if $a = 0$}.
\end{cases}
\end{equation*}
Therefore, if $A$ is any ring and $\frakp \subseteq A$ is a prime ideal, we have the trivial valuation $A \twoheadrightarrow A/\frakp \xrightarrow{\bval_{\triv}} \{0,1\}$ modulo $\frakp$.
\end{example}

\begin{example}
Let $A = \Q$. Some valuations on $A$ are given by:
\begin{itemize}
\item The trivial valuation $\bval=\bval_{\triv}$.
\item The valuation $\bval = \bnorm_{\infty}$ given by the Archimedean norm $\Q \xrightarrow{\bnorm_\infty} \R_{\geq 0}$.
\item The valuation $\bval = \bnorm_\ell$ given by an $\ell$-adic norm $\Q \xrightarrow{\bnorm_\ell} \R_{\geq 0}$ for a prime $\ell$.
\end{itemize}
Ostrowski's theorem states these are the only valuations on $\Q$, up to equivalence. (See Section \ref{subsec:valuation-spectra} below for ``equivalence'').
\end{example}

\begin{example}\label{example:valations-on-p-adics}
Let $A = \Qp$. We already know about $\bval_{\triv}$ and  $\bnorm_p$. But, there are more. A theorem sometimes attributed to Chevalley states that if $L/K$ is a field extension, then a valuation on $K$ extends to a valuation on $L$. See \cite[Chapter VI, \S 3, no. 3, Proposition 5]{Bourbaki-CommutativeAlgebra}, for instance. So, each valuation on $\Q$ extends to a valuation on $\Qp$ (in many ways). Note, this algebraic phenomenon requires enlarging the target group $\Gamma$. On $\Qp$, the valuations $\bval = \bval_{\triv}$ and $\bval = \bnorm_p$ are distinguished (up to equivalence) as the ones that take value in a cyclic group. The $p$-adic norm is distinguished as the only one that is continuous for the $p$-adic topology on $\Qp$. (Compare with Exercise \ref{exercise:Qp-valuation-check}, after reading a bit further.)
\end{example}

\begin{example}\label{example:tate-algebra}
Let $A = \Cp\lr w$ be the one-variable Tate algebra over $\Cp$. For $\alpha \in \OCp$, we have a valuation
\begin{equation*}
f \overset{\bval_\alpha}{\longmapsto} \norm{f(\alpha)}_p 
\end{equation*}
on $A$. These valuations were considered in our motivation (page \pageref{eqn:motivation-valuation-alpha}). There is also the valuation $\bval_1 := \bnorm_{\Gauss}$ that defines the topology on $A$. It is given by
\begin{equation*}
f = a_0 + a_1w + a_2w^2 + \dotsb \rightsquigarrow  \val{f}_{1} = \max_{i\geq 0} \norm{a_i}_p.
\end{equation*}
We switch notation to $\bval_1$, so that we can easily change ``$1$'' to another real number $r$ such that  $0 < r \leq 1$. That is, we define
\begin{equation*}
\val{f}_r = \max_{i\geq 0}  \norm{a_i}_pr^i.
\end{equation*}
Each $\bval_\alpha$ is semi-norm, while each $\bval_r$ is a norm. A more exotic example is given in Section \ref{subsec:exotic-Gamma} below.

\subsection{Continuous valuations}\label{subsection:continuous-valuations}
In the examples above, $\Qp$ and $\Cp\lr w$ are topological rings. Huber rings are also topological.  So, let us explain continuity for valuations. Assume $A \xrightarrow{\bval} \Gamma \cup \{0\}$ is a valuation. The \term{value group} $\Gamma_{\bval}$  is the subgroup of $\Gamma$ generated by the {\em non-zero} $\val{f}$ for $f \in A$. If $A=K$ is a field, then $\Gamma_{\bval} = |K^\times|$. In general, it is the {\em smallest}  subgroup of $\Gamma$ through which $\bval$ factors.

Now suppose $A$ is a topological ring. We say that $\bval$ is a \term{continuous valuation} on $A$ if for all $\gamma \in \Gamma_{\bval}$, the subset
\begin{equation}\label{eqn:Ugamma}
U_\gamma = \{f \in A \mid \val f < \gamma \} \subseteq A
\end{equation}
is open in $A$. (The $U_\gamma$ are open {\em subgroups} even.) In the examples above:
\begin{enumerate}[label=(\roman*)]
\item Let $A$ be a topological ring and $\frakp$ a prime ideal. The trivial valuation modulo $\frakp$ is continuous if and only if $\frakp \subseteq A$ is open.
\item The $p$-adic norm is continuous on $\Qp$. The trivial norm is not.
\item The valuations $\bval_{\alpha}$ and $\bval_r$ on $\Cp\lr w$ are continuous.
\end{enumerate}
What might a reasonable discontinuous valuation look like? Consider $\Cp[w] \subseteq \Cp\lr w$ with the  topology induced from the Gauss norm. Then,
\begin{equation*}
\val{a_0+a_1w + \dotsb + a_nw^n}_r = \max_{i=0,\dotsc,n} \norm{a_i}_pr^i
\end{equation*}
is a valuation on $\Cp[w]$ for all $0 \leq r < \infty$. However, it is continuous if and only if $r \leq 1$. See Exercise \ref{exercise:r-norm-polynomials}.

\subsection{An exotic example}\label{subsec:exotic-Gamma}
As promised in Example \ref{example:tate-algebra}, we give an extended example of a valuation on $\Cp\lr w$ that is not a (semi-)norm. Define  $\Gamma = \R_{> 0} \times \R_{>0}$ with the ``read left to right'' ordering. The technical term is \term{lexicographic}.  In symbols,
\begin{equation*}
a < c \implies (a,b) < (c,d) \;\;\;\;\; \text{(any $b,d$)},
\end{equation*}
and
\begin{equation*}
e < b \implies (a,e) < (a,b).
\end{equation*}
The right-hand portion of Figure \ref{fig:exotic-valuation-order} illustrates the order relation on $\Gamma$.

\begin{figure}[htp]
\centering
\includegraphics{./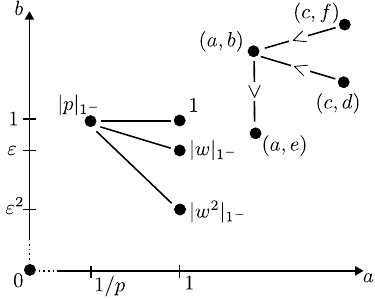}
\caption{A visualization of $\Gamma \cup \{0\}$ for $\Gamma = \R_{> 0} \times \R_{>0}$. The right-hand portion of the figure illustrates the generic order relation. The left-hand portion illustrates the position of $p$ versus $w$ under the valuation $|-|_{1^-}$.}
\label{fig:exotic-valuation-order}
\end{figure}

Now pick a real number $0 < \varepsilon < 1$.  We define $\bval_{1^-}$ on $\Cp\lr w$ by
\begin{equation*}
\val{a_0 + a_1w + a_2w^2 + \dotsb}_{1^-} = \max_{i \geq 0} (\norm{a_i}_p, \varepsilon^i) \in \Gamma.
\end{equation*}
What does $\bval_{1^-}$ measure? Write $\val{f}_{1^-} = (a,b)$. Then, $a \geq \norm{a_i}_p$ for all $i$ by definition of the lexicographic order. Since $a = \norm{a_i}_p$ for some $i$ as well, we see 
\begin{equation}\label{eqn:1-minus-factorize}
a = \max_{i \geq 0} \norm{a_i}_p = \val{f}_{1}.
\end{equation}
So, the first coordinate of $\val{f}_{1^-}$ is the Gauss norm of $f$. What about the second? Suppose that $i_1 < i_2 < \dotsb < i_n$ are the indices where $\norm{a_{i_j}}_p = \val{f}_{1}$. Then, in $\Gamma$, we have
\begin{equation*}
(\norm{a_{i_n}}_p,\varepsilon^{i_n}) < \dotsb < (\norm{a_{i_2}}_p,\varepsilon^{i_2}) < (\norm{a_{i_1}}_p,\varepsilon^{i_1}) = \val{f}_{1^-}.
\end{equation*}
Thus, $\val{f}_{1^-}$ records, in the second coordinate, the index of least degree whose coefficient realizes the Gauss norm. We propose Exercise \ref{exercise:1minus-motivation} to interpret $\bval_{1^-}$ as combining the Gauss norm with the order of vanishing of polynomials modulo $p$.

The calculation shows $\bval_{1^-}$ is continuous on $\Cp\lr w$. Indeed, if $a,b > 0$ are real numbers, we have shown that within $\Cp\lr w$ we have
\begin{equation*}
U_{a}^{\bval_1} \subseteq U_{(a,b)}^{\bval_{1^-}}.
\end{equation*}
(The superscripts indicate the valuation to which the ``$U_\gamma$''-notation is being applied.) Each $U_a^{\bval_1}$ is open, since $\bval_1$ defines the topology on $\Cp\lr w$. Thus each $U_{(a,b)}^{\bval_{1^-}}$ is open as well.

Finally, $\bval_{1^-}$ is written with a ``$1^-$'' to promote intuition that the coordinate $w$ is measured infinitesimally below 1. Indeed, $\val{p}_{1^-} = (1/p,1)$ and $\val{w}_{1^-} = (1,\varepsilon)$. Therefore,
\begin{equation}\label{eqn:p-versus-w-1minus}
\val{p}_{1^-} < \val{w}^n_{1^-} < 1 \;\;\;\;\;\; \text{(for all $n$)}.
\end{equation}
 It is as if $w$ as squeezed between $1/\sqrt[n]{p}$ and $1$ for all $n$. See Figure \ref{fig:exotic-valuation-order} again. We will give another sense in which $\bval_{1^-}$ is near to 1 in Section \ref{subsec:valuation-final-example}.
\end{example}

\subsection{Valuation spectra}\label{subsec:valuation-spectra}
Valuation spectra are defined as equivalence classes of valuations. Let $A$ be any ring with valuations $\bval_1$ and $\bval_2$. We say that $\bval_1$ is \term{equivalent} to $\bval_2$ if for all $f,g \in A$ we have
\begin{equation*}
\val{f}_1 \leq \val{g}_1 \iff \val{f}_2 \leq \val{g}_2.
\end{equation*}
Note that $\bval_1$ and $\bval_2$ may take values in different abstract groups. When they are equivalent, there is an {\em isomorphism} of value groups turning one valuation into the other. This and another interpretation of equivalence are suggested as Exercise \ref{exercise:valuation-equivalence}. 

One impact of equivalence for valuation is that it turns seemingly true statements into actually true ones. For instance, ``there is only one continuous valuation on $\Qp$'' is only true if it is understood up to equivalence. (See Exercise \ref{exercise:Qp-valuation-check}.) After all, $\bnorm_p^2$ and $\bnorm_p$ are distinct continuous valuations. Another example of equivalence is that $\bval_{1^-}$ depends on the choice of the parameter $\varepsilon$ only up to equivalence.

Now assume $A$ is a topological ring. Its \term{continuous valuation spectrum} is
\begin{equation*}
\Cont(A) = \{\text{continuous valuations on $A$}\}/\text{(valuation equivalence)}.
\end{equation*}
We suggest as Exercise \ref{exercise:continuity-equivalent-valuations} showing that whether or not a valuation $\bval$ is continuous depends only on $\bval$ up to equivalence. Therefore, there is a natural inclusion $\Cont(A) \subseteq \Cont(A_{\disc})$,
where $A_{\disc}$ is $A$ with the discrete topology. The larger space
\begin{equation*}
\Spv(A) := \Cont(A_{\disc})
\end{equation*}
is called the \term{valuation spectrum}. No continuity qualification is imposed on $\Spv(A)$. For a non-zero ring, the valuation spectrum is always non-empty, since one always has the trivial valuation modulo a prime ideal. The non-emptiness of $\Cont(A)$ is more subtle. See Section \ref{subsection:final-comments} for a related discussion. In the remainder of this subsection and Section \ref{subsec:support-fibers}, we make formal constructions on $\Spv(A)$. The continuous valuations return in Section \ref{subsection:continuous-huber}. 

We will use $x$ to denote an element of $\Spv(A)$. Let $\bval$ be a choice of representative for the class $x$. If $f \in A$, we define notation
\begin{equation}\label{eqn:notation-valuation-point}
\val{f(x)} := \val{f}.
\end{equation}
Since the target group of $\bval$ is not well-defined, \eqref{eqn:notation-valuation-point} has no clear meaning. One route, mentioned above, is solving Exercise \ref{exercise:valuation-equivalence} to see $\val{f}$ is well-defined up to ordered group isomorphism on value groups. The route we will take is to play more loosely and agree to only use the notation \eqref{eqn:notation-valuation-point} in situations where only the class $x$, and not the choice of $\bval$, matters.

For instance, if $x \in \Spv(A)$, then its \term{support} is defined to be
\begin{equation*}
\supp(x) = \{f \in A \mid \val{f(x)} = 0\}.
\end{equation*}
The support depends only on the equivalence class $x$ because ``$\val{f} = 0$'' is the same as $``\val{f} \leq \val{0}$''. It is a prime ideal, and if $A$ is topological and $x \in \Cont(A)$, then it is closed. Prove these facts as Exercise \ref{exercise:supports-closed}.

The notation \eqref{eqn:notation-valuation-point} is also used in equipping $\Spv(A)$ with a topology. For $g,s \in A$ we define
\begin{equation}\label{eqn:subbasic-rational-subset}
U(\frac{g}{s}) = \{x \in \Spv(A) \mid \val{g(x)} \leq \val{s(x)} \neq 0\}.
\end{equation}
Weinstein observes that this  ``blends  features of the Zariski topology on schemes with the topology on rigid spaces'' (\cite[p. 6]{AWS-Perfectoid}). Indeed, $x \in U(\frac{g}{s})$ implies both that $s \not\in \frakp = \supp(x)$ (a Zariski condition) and $\val{g(x)}\leq \val{s(x)}$ (a rigid condition). A basic open set for the topology on $\Spv(A)$ is, by definition, a finite intersection
\begin{align*}
U(\frac{g_1,\dotsc,g_r}{s}) &:= \bigcap_{i=1}^r U(\frac{g_i}{s})\\
&= \{x \in \Spv(A) \mid \val{g_i(x)} \leq \val{s(x)} \neq 0 \text{ for all $i$}\}.
\end{align*}
If $A$ is a topological ring, we equip $\Cont(A) \subseteq \Spv(A)$ with the induced topology by these basic opens.

We have two warnings before going further. First, be careful about cancellation. The set $U(\frac{s}{s})$ {\em has} a condition:
\begin{equation}\label{eqn:U(s/s)}
U(\frac{s}{s}) = \{x \in \Spv(A) \mid \val{s(x)} \neq 0\}.
\end{equation}
Second, we can form $U(\frac{g_1,\dotsc,g_r}{s})$ for any choice of $g_1,\dotsc,g_r,s$. If $A$ is a topological ring then a \term{rational subset} is one of the form
\begin{equation*}
U(\frac{g_1,\dotsc,g_r}{s})
\end{equation*}
where $g_1,\dotsc,g_r,s \in A$ are chosen to generate an open $A$-ideal. This is the same condition required for rational localization of Huber rings. We return to this in Section \ref{subsubsec:rational-localization-huber-pairs}.

\subsection{The support map}\label{subsec:support-fibers}

The function $x \mapsto \supp(x)$ defines a {\em continuous} function
\begin{equation*}
\supp: \Spv(A) \rightarrow \Spec(A).
\end{equation*}
To see this, note that basic open sets in $\Spec(A)$ take the form
\begin{equation*}
D(s) = \{\frakp \in \Spec(A) \mid s \not\in \frakp\}
\end{equation*}
for $s \in A$. Thus, $\supp^{-1}(D(s)) = U(\frac{s}{s})$. 

We next describe the fibers of the support map. Let $\bval$ be  a valuation on $A$ and $\frakp$ its support. If $f \in A$ and $f \not\in \frakp$, then the strong triangle inequality implies $\val{f + g} = \val{f}$ for all $g \in \frakp$. The same holds if $f \in \frakp$, since $\frakp$ is an additive subgroup. This shows $\val{f}$ depends only on $f \bmod \frakp \in A/\frakp$ and $\bval$ factors through $A/\frakp$, on which it defines a valuation with support the zero ideal. In particular, $\bval$ extends to a valuation on the fraction field $\Frac(A/\frakp)$. In summary, there is always a commuting diagram
\begin{equation}\label{eqn:valuation-factorization}
\xymatrixcolsep{4pc}
\xymatrixrowsep{2pc}
\xymatrix{
A \ar[r]^-{\bval} \ar@{>>}[d] & \Gamma \cup \{0\}\\
A/\mathfrak p \ar@{.>}[ur]^-{\bval} \ar[r] \ar@{^{(}->}[r] & \Frac(A/\mathfrak p). \ar@{.>}[u]_-{\bval}
}
\end{equation}
The factorization only depends on $\bval$ up to equivalence, in the sense that \eqref{eqn:valuation-factorization} induces a  well-defined map
\begin{equation}\label{eqn:support-fiber-in-text-homeomorphism}
\Spv(A) \supseteq \supp^{-1}(\frakp) \rightarrow \Spv(\Frac(A/\frakp))
\end{equation}
for each prime ideal $\frakp \in \Spec(A)$. In fact, \eqref{eqn:support-fiber-in-text-homeomorphism} gives a homeomorphism
\begin{equation*}
\supp^{-1}(\frakp) \xrightarrow{\simeq} \Spv(\Frac(A/\frakp)).
\end{equation*} 
The support map therefore allows us to treat $\Spv(A)$ as families of valuations {\em over residue fields} of $\Spec(A)$.

The target of \eqref{eqn:support-fiber-in-text-homeomorphism} is the valuation spectrum of a field. This is a space classically understood through algebra. Let $K$ be any field. For $x \in \Spv(K)$, the subring
\begin{equation*}
A_x = \{\alpha \in K \mid \val{\alpha(x)}\leq 1\} \subseteq K
\end{equation*}
is called the \term{valuation ring} of $x$. Each $A_x$ is a valuation ring in the sense of commutative algebra (see  \cite[Chapter VI, \S 1, no. 2]{Bourbaki-CommutativeAlgebra}). That is, if $\alpha \in K$, then either $\alpha \in A_x$ or $\alpha^{-1} \in A_x$. Any valuation ring is a local ring. In this case, one can directly check the set of non-units in $A_x$ is equal to those $\alpha \in A_x$ such that $\val{\alpha(x)} < 1$, and that this set forms an $A_x$-ideal, written $\frakm_x$. According to Exercises \ref{exercise:valuation-to-valuation-ring}-\ref{exercise:valuation-valuation-ring-bijection}, the association
\begin{equation}\label{eqn:SpvK-valuation-rings}
\Spv(K) \xrightarrow{x\mapsto A_x} \{\text{valuation subrings $A\subseteq K$}\}
\end{equation}
is a bijection. We will use this bijection in our analysis of the closed unit disc in Section \ref{section:some-points}.

Finally, the inclusion $\Cont(A) \subseteq \Spv(A)$ endows $\Cont(A)$ with a topology as well. The support map is, by definition, continuous when restricted to $\Cont(A)$. However, the above analysis is purely algebraic. One of Huber's preliminary results on continuous valuation spectra (\cite[Theorem 3.1]{Huber-ContinuousValuations}) is an analysis of the support fibers in $\Cont(A)$ when $A$ is a Huber ring. We will not study that result. Instead, in the next section we will directly analyze continuity of valuations on Huber rings, rather than analyzing the space $\Cont(A)$ itself.

The reader may want practice manipulating $\Cont(-)$. Exercise \ref{exercise:functorial-Cont(A)} is recommended, as are Exercises \ref{exercise:adic-maps} and \ref{exercise:adic-maps-functoriality}, where the important concept of an \term{adic morphism} of Huber rings is explained.

\subsection{Continuous valuations on Huber rings}\label{subsection:continuous-huber}

We now focus on Huber rings. The primary goal is explaining which valuations on Huber rings are continuous.

Let $\Gamma$ be a totally ordered abelian group. We say $\gamma \in \Gamma\cup \{0\}$ is \term{co-final} in $\Gamma$ if, for all $\delta \in \Gamma$ we have $\gamma^n < \delta$ for $n\gg 0$. This is similar to topological nilpotence. Note that $\gamma = 0$ is automatically co-final. It is also true that every co-final $\gamma$ must be less than $1$. Indeed, if $1 \leq \gamma$ then $1 \leq \gamma^n$ for all $n\geq 0$. If $\Gamma \subseteq \R_{>0}$, then the co-final $\gamma$ are indeed just the $\gamma$ such that $\gamma < 1$.

Now suppose that $\bval : A \rightarrow \Gamma \cup \{0\}$ is a valuation. Recall the value group $\Gamma_{\bval}$ is the smallest subgroup of $\Gamma$ containing the non-zero $\val{f}$ for $f \in A$. For valuations on Huber rings, we have the following continuity criterion.

\begin{proposition}[Continuity criterion]\label{proposition:contiunity-criteria}
Let $A$ be a Huber ring. Let 
\begin{equation*}
\bval: A \rightarrow \Gamma \cup \{0\}
\end{equation*} 
be a valuation and $\Gamma_{\bval}$ be its value group. The following conditions are equivalent:
\begin{enumerate}[label=(\roman*)]
\item $\bval$ is continuous.\label{enum-part:valuation-is-continuous} 
\item If $f$ is topologically nilpotent, then $\val{f}$ is co-final in $\Gamma_{\bval}$.\label{enum-part:top-nil-implies-co-final}
\item  Suppose $(A_0,I)$ is a pair of definition and write $I = A_0 f_1 + \dotsb + A_0 f_d$. 

Then, for each $i$ we have $\val{f_i}$ is co-final in $\Gamma_{\bval}$ and $\val{ff_i} < 1$ for all $f \in A_0$.\label{enum-part:ideal-defn-criterion-continuity}
\end{enumerate}
In particular, with notation as in \ref{enum-part:ideal-defn-criterion-continuity}, suppose $\bval$ is a valuation, each $\val{f_i}$ is co-final in $\Gamma_{\bval}$, and some $\val{f_i}\neq 0$. Then,
\begin{equation*}
\text{$\bval$ is continuous} \iff \val{f} < \frac{1}{\max(\val{f_1},\dotsc,\val{f_d})} \;\;\;\;\text{ for all $f \in A_0$}.
\end{equation*}
\end{proposition}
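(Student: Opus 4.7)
I would prove the triple equivalence by running the cycle (i) $\Rightarrow$ (ii) $\Rightarrow$ (iii) $\Rightarrow$ (i), then read off the ``in particular'' statement from (i) $\Leftrightarrow$ (iii). The two easy directions use openness of the subsets $U_\gamma$ in \eqref{eqn:Ugamma} together with the fact that elements of an ideal of definition are topologically nilpotent (Proposition~\ref{proposition:topologically-nilpotent}). The hard direction (iii) $\Rightarrow$ (i) goes the other way, turning the algebraic inequalities of (iii) into openness of each $U_\gamma$.

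\textbf{Easy directions.} For (i) $\Rightarrow$ (ii), assume $\bval$ is continuous and $f$ is topologically nilpotent. For any $\delta \in \Gamma_{\bval}$, $U_\delta$ is open, so $f^n \in U_\delta$ for $n \gg 0$; i.e.\ $\val{f}^n < \delta$, proving $\val{f}$ is co-final. For (ii) $\Rightarrow$ (iii), fix $(A_0,I)$ with $I = A_0f_1 + \dotsb + A_0f_d$. Each $f_i \in I$ is topologically nilpotent by Proposition~\ref{proposition:topologically-nilpotent}, so $\val{f_i}$ is co-final by (ii). For $f \in A_0$, the product $ff_i$ still lies in the $A_0$-ideal $I$, hence is topologically nilpotent; thus $\val{ff_i}$ is co-final, which forces $\val{ff_i} < 1$ since every co-final element of $\Gamma \cup \{0\}$ is strictly below $1$.

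\textbf{The hard direction.} For (iii) $\Rightarrow$ (i), fix $\gamma \in \Gamma_{\bval}$. The ultrametric makes $U_\gamma \subseteq A$ an additive subgroup, so it suffices to show $I^N \subseteq U_\gamma$ for some $N$. Put $M = \max_i \val{f_i} \in \Gamma_{\bval} \cup \{0\}$; as the max of finitely many co-final elements, $M$ itself is co-final. A general element of $I^N$ is an $A_0$-linear combination of $N$-fold products $f_{i_1}\cdots f_{i_N}$, and for any $a \in A_0$ the hypothesis $\val{af_{i_1}} < 1$ from (iii), combined with $\val{f_{i_k}} \leq M$, yields
\[
\val{af_{i_1}f_{i_2}\cdots f_{i_N}} \;=\; \val{af_{i_1}} \cdot \val{f_{i_2}} \cdots \val{f_{i_N}} \;<\; M^{N-1}.
\]
Choose $N$ with $M^{N-1} < \gamma$, which is possible by co-finality of $M$ (trivial if $M=0$). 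By the ultrametric, $\val{g} < \gamma$ for all $g \in I^N$, so $I^N \subseteq U_\gamma$.

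\textbf{The ``in particular'' statement and main obstacle.} Under the extra assumptions that each $\val{f_i}$ is co-final and $M > 0$, condition (iii) collapses to the single requirement $\val{ff_i} < 1$ for all $f \in A_0$ and all $i$, which unpacks to $\val{f} < 1/M$; so continuity is equivalent to this single inequality, by (i) $\Leftrightarrow$ (iii). The main obstacle is the hard direction, specifically recognizing that the strict inequality $\val{af_{i_1}} < 1$ from (iii) is what lets the product be bounded strictly by $M^{N-1}$ rather than only by $M^N$; once this is in hand, co-finality of $M$ drives the bound below any chosen threshold $\gamma$ by taking $N$ large.
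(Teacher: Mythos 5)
Your proof is correct and follows essentially the same route as the paper: the cycle (i) $\Rightarrow$ (ii) $\Rightarrow$ (iii) $\Rightarrow$ (i), with the easy implications resting on openness of the $U_\gamma$ and topological nilpotence of elements of $I$, and the hard implication showing $I^N \subseteq U_\gamma$ using that $U_\gamma$ is an additive subgroup. The only (cosmetic) difference is in the hard direction: the paper picks $n$ with $\val{f_i}^n < \gamma$ for every $i$ and uses a pigeonhole argument on $I^{nd+1}$ to extract a factor $f_i^n$, whereas you bound each additive generator of $I^N$ directly by $M^{N-1}$ with $M = \max_i \val{f_i}$ and invoke co-finality of $M$ --- a slightly cleaner bookkeeping of the same estimate.
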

\begin{proof}
First suppose $\bval$ is continuous. If $\gamma \in \Gamma_{\bval}$, then the set
\begin{equation*}
U_{\gamma} = \{f \in A \mid \val{f} < \gamma\}
\end{equation*}
is open in $A$. So, if $f \in A$ is topologically nilpotent, then $f^n \in U_{\gamma}$ for all $n\gg 0$. In symbols, $\val{f}^n < \gamma$ for all $n \gg 0$. So, $\val{f}$ is co-final in $\Gamma_{\bval}$. This shows \ref{enum-part:valuation-is-continuous} implies \ref{enum-part:top-nil-implies-co-final}.

Now assume that \ref{enum-part:top-nil-implies-co-final} holds. Fix the notation $(A_0,I)$ as in \ref{enum-part:ideal-defn-criterion-continuity}. The elements of $I$ are among the topologically nilpotent elements in $A$. Therefore, \ref{enum-part:top-nil-implies-co-final}  implies each $\val{f_i}$ is co-final in $\Gamma_{\bval}$. In fact, each $g \in I$ has co-final value $\val{g}$, which implies the weaker conclusion that if $g \in I$ then $\val{g} < 1$. Therefore, if $f \in A_0$ and $i=1,\dotsc,d$, then $\val{ff_i} < 1$ because $g = ff_i \in I$. We have shown \ref{enum-part:top-nil-implies-co-final} implies \ref{enum-part:ideal-defn-criterion-continuity}.

Finally, we prove that \ref{enum-part:ideal-defn-criterion-continuity} implies \ref{enum-part:valuation-is-continuous} by a direct argument. Suppose that $\gamma \in \Gamma_{\bval}$. By assumption in \ref{enum-part:ideal-defn-criterion-continuity}, $\val{f_i}$ is co-final in $\Gamma_{\bval}$ for $i=1,\dotsc,d$. There are only $d$-many $f_i$, so there exists $n\geq 0$ such that $\val{f_i}^n < \gamma$ for all $i$ at once. We now claim
\begin{equation}\label{eqn:Ind-in-Udelta}
I^{nd+1} \subseteq U_\gamma.
\end{equation}
If proven, then $U_\gamma$ is open in $A$. Since $\gamma$ was arbitrary, we have proven \ref{enum-part:ideal-defn-criterion-continuity} implies \ref{enum-part:valuation-is-continuous}.

We now show \eqref{eqn:Ind-in-Udelta}. The $A_0$-ideal $I^{nd+1}$ is generated as an abelian group by elements of the form
\begin{equation}\label{eqn:f-g-fiexpansion}
g = f f_1^{m_1}\dotsb f_d^{m_d} \;\;\;\;\;\;\; f \in A_0,\;\; m_1+\dotsb + m_d = nd + 1.
\end{equation}
Since $U_{\gamma}$ is an additive subgroup of $A$, it is enough to show $g \in U_{\gamma}$ for such $g$. Now, in \eqref{eqn:f-g-fiexpansion} we have $d$-many $m$'s and they sum to $nd+1$. So, $m_i \geq n+1$ for some $i$. Since $f \in A_0$ we have $\val{ff_i} < 1$ by \ref{enum-part:ideal-defn-criterion-continuity}. Since $\val{f_j} < 1$ for all $j$, in any case, we can write $g = f'f_i^n$ where $\val{f'} < 1$. Therefore, $\val{g} < \gamma$, completing the proof that \eqref{eqn:Ind-in-Udelta} holds.
\end{proof}

We have two reasons for explaining Proposition \ref{proposition:contiunity-criteria}. The primary reason is that we will use the criterion to check valuations are continuous in Sections \ref{section:Aplus}-\ref{section:some-points}. The secondary reason is that the criterion is only implicitly presented in Huber's paper \cite{Huber-ContinuousValuations}. It is more explicit in the notes by Conrad \cite{Conrad-PerfectoidSeminarNotes}, Morel \cite{Morel-AdicSpaceNotes}, and Wedhorn \cite{Wedhorn-AdicSpaces}. See especially \cite[Corollary 9.3.3]{Conrad-PerfectoidSeminarNotes}. However, in those sources, the criterion is established alongside arguments showing $\Cont(A)$ is a spectral space if $A$ is a Huber ring. In particular, the other references all appeal to the theorem \cite[Theorem 3.1]{Huber-ContinuousValuations} referenced at the end of the prior section. While learning this material, we have found it helpful to have a direct argument toward Proposition \ref{proposition:contiunity-criteria}, which allows for a nearly instant check on whether a valuation is continuous.

\subsection{Example:\ $\Cont(\Cp\lr w)$}\label{subsec:valuation-final-example}
In the final section of our second lecture, we revisit continuous valuations on $\Cp \lr w$. We explain an instance of specialization in $\Cont(\Cp\lr w)$, illustrate the support map, and introduce Huber's model for the closed unit disc.

Let  $A=\Cp\lr w$, which is a topological ring with the topology endowed from the Gauss norm $\bval_1 = \bnorm_{\Gauss}$. The valuations on $A$ listed in Example \ref{example:tate-algebra} are continuous. We re-list them. 
\begin{enumerate}[label=(\roman*)]
\item If $\alpha \in \OCp$, then $\val{f(x_\alpha)} = \norm{f(\alpha)}_p$ defines a point $x_\alpha \in \Cont(A)$. 
\item If $r \leq 1$, the $r$-Gauss norm $\val{f(x_r)} = \val{f}_r$ defines a point $x_r \in \Cont(A)$. 
\item We also have $x_{1^-} \in \Cont(A)$ given by $\val{f(x_{1^-})} = \val{f}_{1^-}$, where on non-zero $f$ we have
\begin{equation*}
\val{a_0 + a_1w + a_2w^2 + \dotsb}_{1^-} = \max_{i\geq 0} (\norm{a_i}_p,\varepsilon^i) \in \R_{>0} \times \R_{>0},
\end{equation*}
for some choice of $0 < \varepsilon < 1$. See Section \ref{subsec:exotic-Gamma}.
\end{enumerate}
We previously argued in \eqref{eqn:1-minus-factorize} in Section \ref{subsec:exotic-Gamma} that $\bval_1$ and $\bval_{1^-}$ are related by a commuting diagram
\begin{equation}\label{eqn:1-minus-versus-gauss}
\xymatrix{
 & A \ar[dl]_-{\bval_{1^-}} \ar[dr]^-{\bval_1}\\
 \R_{>0} \times \R_{>0} \cup \{0\} \ar[rr]_-{(a,b) \mapsto a} & & \R_{>0} \cup \{0\}.  
}
\end{equation}
Here, the horizontal projector $(a,b) \mapsto a$ is a morphism of totally ordered abelian groups. So, along with continuity of $\bval_{1^-}$, we deduce that if $g,s \in A$, then
\begin{equation*}
\val{g}_{1^-} \leq \val{s}_{1^-} \neq 0 \implies \val{g}_{1} \leq \val{s}_1 \neq 0.
\end{equation*} 
In terms of the topology on $\Cont(A)$, we have
\begin{equation*}
\bval_{1^-} \in U(\frac{g}{s}) \implies \bval_1 \in U(\frac{g}{s}).
\end{equation*} 
Therefore, each open in $\Cont(A)$ that contains $x_{1^-}$ also contains $x_1$. Said another way, $x_{1^-}$ lies in the closure of $x_1$ within $\Cont(A)$. This gives a new sense to the intuition that $x_{1^-}$ is infinitesimally close to $x_1$.

Next, we analyze supports. It is clear that $\supp(x_\alpha) = \lr{w - \alpha}$. As Exercise \ref{exercise:fiber-closed-point}, the reader can check that $x = x_\alpha$ is the {\em only} point of $\Cont(A)$ with this property. Since $\lr{w-\alpha} \in \Spec(A)$ is a closed point, so is $x_\alpha = \supp^{-1}(\lr{w-\alpha}) \in \Cont(A)$. The closedness is one way the $x_\alpha$ are distinguished from the Gauss point.

The remaining points of $\Cont(A)$ have generic support $\{0\}$. We analyze these points more closely in Section \ref{section:some-points}. The point $x_{1^-}$ is closed, while $x_r$ is non-closed whenever $r = \norm{\alpha}_p$ lies in the value group of $\Cp$. Since $x_{1^-}$ and $x_1$ have the same support {\em and} $x_{1^-}$ lies in the closure of $x_1$, we call $x_{1^-}$ a \term{vertical specialization} of $x_1$. See Exercise \ref{exercise:vertical-specialization} for details on this terminology. A cartoon is drawn in Figure \ref{fig:support-map-drawing}.

\begin{figure}[htp]
\centering
\includegraphics{./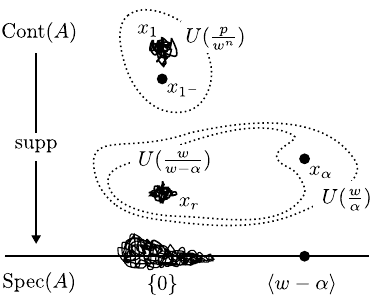}
\caption{A visualization of the support map for $A = \Cp\lr w$. The element $\alpha \in \OCp$ is meant to have $\norm{\alpha}_p = r$, and $n$ is chosen so large that $\frac{1}{p} > r^n$.}
\label{fig:support-map-drawing}
\end{figure}

We also examine the continuity criterion. We may choose $\OCp\lr w$ as a ring of definition and $p$ as a (principal) generator of an ideal of definition. Let $\bval$ be a valuation on $A$. Then, $\val{p} \neq 0$ since $p$ is a unit. The continuity criterion Proposition \ref{proposition:contiunity-criteria} implies that $\bval$ is continuous if and only if $\val{p}$ is co-final in $\Gamma_{\bval}$ and
\begin{equation}\label{eqn:f-less-pinv}
\val{f} < \val{p}^{-1}
\end{equation}
for all $f \in \OCp\lr w$. This gives a second argument for the continuity of $\bval_{1^-}$. Indeed, comparing with Figure \ref{fig:exotic-valuation-order}, we see the following.
\begin{enumerate}[label=(\alph*)]
\item The element $\val{p}_{1^-} = (\frac{1}{p},1)$ is co-final in $\R_{>0} \times \R_{>0}$.
\item If $f \in \OCp\lr w$, then $\val{f}_{1^-} \leq \val{f}_1 \leq 1 < \val{p^{-1}}_{1^-}$.
\end{enumerate}
Note, our continuity check here shows that $\val{f}_{1^-} \leq 1$ for $f \in \OCp\lr w$. This is strictly stronger than the requirement \eqref{eqn:f-less-pinv}. Indeed, it suggests a modification of $x_{1^-}$ that makes $w$ infinitesimally larger than 1 rather than smaller. We achieve this by defining
\begin{equation*}
\val{a_0 + a_1w + a_2w^2 + \dotsb}_{1^+} = \max_{i \geq 0} (\norm{a_i}_p,\varepsilon^{-i}).
\end{equation*}
(Still $0 < \varepsilon < 1$.) The value $\val{f}_{1^+}$ measures the Gauss norm in the first coordinate and the {\em largest} index coefficient realizing the Gauss norm in the second coordinate. It is still a valuation and still continuous because
\begin{equation}\label{eqn:w1plus}
1 < \val{w^n}_{1^+} = (1,\varepsilon^{-n}) < (p,1) = \val{p^{-1}}_{1^+}.
\end{equation}
We therefore have a new point $x_{1^+} \in \Cont(A)$. It lies in the closure of $x_1$ as before. 

Finally, Huber's model for the \term{closed unit disc} is given by
\begin{equation*}
D = \{x \in \Cont(\Cp\lr w) \mid \val{w(x)} \leq 1\}.
\end{equation*}
From the definitions, we see $D$ contains $x_\alpha$ for each $\alpha \in \OCp$, it contains $x_r$ for all $0 < r\leq 1$, and it contains $x_{1^-}$. There is a strict containment $D \subsetneq \Cont(\Cp\lr w)$ because $x_{1^+}\not\in D$ by \eqref{eqn:w1plus}. One of the primary results in Huber's \cite{Huber-ContinuousValuations} is the following theorem. Here we specialize to the context of $D$. See Theorem \ref{thm:hubers-main-theorem} later, as well.

\begin{theorem}[Huber]\label{thm:hubers-main-theorem-D}
The topological space $D$ is quasi-compact and quasi-separated. Moreover, if $g_1,\dotsc,g_r,s \in \Cp\lr w$ generate the unit ideal, then the rational subset $U(\frac{g_1,\dotsc,g_r}{s})$ is also quasi-compact.
\end{theorem}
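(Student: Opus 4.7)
The plan is to reduce both assertions to Huber's theorem that $\Cont(A)$ is a spectral topological space whenever $A$ is a Huber ring, which is the central structural result of \cite{Huber-ContinuousValuations}. Granted spectrality of $\Cont(\Cp\lr w)$, quasi-compactness and quasi-separatedness follow by definition, and rational subsets $U(\frac{g_1,\dotsc,g_r}{s})$ satisfying the open-ideal hypothesis are quasi-compact opens in the spectral topology by construction. For $D$ itself, I would observe that it is a rational subset: since $\val{1(x)} = 1 \neq 0$ for any valuation, the inequality $\val{w(x)} \leq 1$ is literally $\val{w(x)} \leq \val{1(x)} \neq 0$, i.e.\ $D = U(\frac{w}{1})$, and $(w,1)$ trivially generates the unit ideal. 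Consequently $D$ inherits spectrality as a quasi-compact open of $\Cont(\Cp\lr w)$, and rational subsets contained in $D$ are still quasi-compact opens of the ambient space.

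The strategy for spectrality of $\Cont(A)$ is to realize $\Cont(A)$ as a pro-constructible subset of $\Spv(A)$ and appeal to the general fact that pro-constructible subsets of spectral spaces are themselves spectral in the subspace topology. First, I would establish that $\Spv(R)$ is spectral for every ring $R$, using the $U(\frac{g}{s})$ from \eqref{eqn:subbasic-rational-subset} as a sub-basis of quasi-compact opens, via Stone duality applied to the Boolean lattice they generate. Applied to $R = A$ with the discrete topology, this gives spectrality of $\Spv(A)$ and exhibits rational subsets among the basic quasi-compact opens. Next, I would cut $\Cont(A)$ out of $\Spv(A)$ using Proposition \ref{proposition:contiunity-criteria}\ref{enum-part:ideal-defn-criterion-continuity}: fixing a pair of definition $(A_0,I)$ with $A_0$-generators $f_1,\dotsc,f_d$ of $I$, a point $x \in \Spv(A)$ is continuous precisely when $\val{ff_i(x)} < 1$ for every $f \in A_0$ and every $i$, together with co-finality of each $\val{f_i(x)}$ in the value group $\Gamma_x$. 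The first batch of conditions is a (possibly infinite) intersection of closed constructible sets, so contributes only pro-constructibility.

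The main obstacle is the co-finality condition, since the value group $\Gamma_x$ varies with $x$ and admits no uniform embedding into $\R_{>0}$; one cannot simply check co-finality against a fixed cofinal sequence. The fix is to express co-finality of $\val{f_i(x)}$ internally: it holds if and only if for every $g \in A$ with $\val{g(x)} \neq 0$ there exists some $n$ with $\val{f_i(x)}^n \leq \val{g(x)}$. Each individual such inequality cuts out a constructible subset of $\Spv(A)$ in terms of the sub-basic $U(\frac{\cdot}{\cdot})$, and the necessary quantifications over $g$ and $n$ can be arranged as a pro-constructible intersection inside the constructible topology. Combining this with the straightforward first batch of conditions exhibits $\Cont(A)$ as pro-constructible in the spectral space $\Spv(A)$, which is the crux of the argument. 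Specialization to $A = \Cp\lr w$ and to the rational subset $D = U(\frac{w}{1})$ then yields the theorem.
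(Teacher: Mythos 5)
First, note that the paper itself does not prove this statement: both Theorem \ref{thm:hubers-main-theorem-D} and its general form Theorem \ref{thm:hubers-main-theorem} are quoted from \cite[Theorem 3.5]{Huber-ContinuousValuations}, and the author explicitly declines to give the argument. So your proposal is measured against Huber's proof rather than anything in the text. Your reductions are fine: $D = U(\frac{w}{1})$ is indeed a rational subset, and once $\Cont(\Cp\lr w)$ is known to be spectral with the rational subsets forming a basis of quasi-compact opens, everything in the statement follows. The outline for spectrality of $\Spv(A)$ is also the standard one.

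The gap is at the step you yourself flag as "the main obstacle": the co-finality condition does \emph{not} cut out a pro-constructible subset of $\Spv(A)$, and your proposed fix does not repair this. Unwinding your internal reformulation, for fixed $g$ the condition "there exists $n$ with $\val{f_i(x)}^n \leq \val{g(x)}$" is an \emph{increasing union} over $n$ of constructible sets, hence ind-constructible (open in the constructible topology), not pro-constructible; intersecting over $g$ then produces a set that is in general neither. The failure is genuine, not merely a defect of the bookkeeping. For $A = \Cp\lr w$ itself, consider the composite valuation
\begin{equation*}
v(f) = \bigl(\varepsilon^{\ord_0 f},\, \norm{a_{\ord_0 f}}_p\bigr) \in \R_{>0}\times\R_{>0} \quad (\text{lexicographic order}),
\end{equation*}
where $\ord_0 f$ is the order of vanishing of $f = \sum a_i w^i$ at $w=0$; this is the $w$-adic valuation composed with the $p$-adic norm on the residue field $\Cp$. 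One checks that for any finitely many pairs $(g,s)$, the disc points $x_{0,r}$ satisfy the same inequalities $\val{g}\leq\val{s}\neq 0$ as $v$ once $r$ is small enough, so $v$ lies in the closure of $\{x_{0,r}\}_{0<r<1} \subseteq \Cont(A)$ for the constructible topology. Yet $v(p) = (1,\tfrac1p)$ is not co-final, so $v \notin \Cont(A)$. Hence $\Cont(A)$ is not closed in the constructible topology of $\Spv(A)$, i.e.\ not pro-constructible, and your argument cannot close as written. This is exactly why Huber interposes the auxiliary space $\Spv(A,I)$ of valuations with support conditions: its spectrality is proved not by pro-constructibility in $\Spv(A)$ but via the retraction $\Spv(A)\rightarrow\Spv(A,I)$ given by horizontal specialization (the device of Section \ref{subsubsec:specialization}), and only \emph{inside} $\Spv(A,I)$ is $\Cont(A)$ pro-constructible, being cut out there by the conditions $\val{a(x)}<1$ for $a\in I$ alone, with co-finality then recovered from membership in $\Spv(A,I)$ by an argument like the one in Section \ref{subsubsec:specialization}. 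That intermediate step is the missing idea.
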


Note that the hypothesis in Theorem \ref{thm:hubers-main-theorem-D} is the same that appears in the rational localization Theorem \ref{theorem:rational-localization}. Compare with Section \ref{subsubsec:rational-localization-huber-pairs}.

To illustrate Huber's theorem, let us return to the issue of disconnection raised on page \pageref{pageref:tate-decomposition}. The closed unit disc $D$ certainly separates into $D = V_1 \cup V_{<1}$ where
\begin{equation*}
V_1 = \{x \in D \mid \val{w(x)} = 1\} \;\;\;\; \text{and} \;\;\;\; V_{<1} = \{x \in D \mid \val{w(x)} < 1\}.
\end{equation*}
Since $\val{p(x)} \neq 0$ and $\val{w(x)} < 1$ on $V_{<1}$, it would {\em seem} that $V_{<1}$ can be written as a union
\begin{equation}\label{eqn:fake-cover}
V_{<1} = \bigcup_{0 < n} U(\frac{w^n}{p}) = \{x \in D \mid \val{w(x)}^n < \val{p(x)} \text{ for $n\gg 0$}\}.
\end{equation}
However, the point $x_{1^-} \in D$ lies in a gap between $V_{<1}$ and the union. This is good! If there were not a gap, then Huber's theorem would be contradicted by $V_1$ together with the ``cover'' of $V_{<1}$ alleged in \eqref{eqn:fake-cover}.

\subsection*{Section \thesection~ Exercises}

\begin{exercise}\label{exercise:r-norm-polynomials}
Let $\Cp[w]$ be given the topology induced from the Gauss norm. For $0\leq r < \infty$, define
\begin{equation*}
\val{a_0+a_1w + \dotsb + a_mw^m}_r = \max_{n} \norm{a_n}r^n.
\end{equation*}
\begin{enumerate}[label=(\alph*)]
\item Show that $\bval_r$ is a valuation on $\Cp[w]$.
\item Show that $\bval_r$ is continuous if and only if $r \leq 1$.
\end{enumerate}
\end{exercise}
\begin{solution}
\begin{enumerate}[label=(\alph*)]
\item Gau\ss\; lemma.
\item Fill in.
\end{enumerate}
\end{solution}

\begin{exercise}\label{exercise:1minus-motivation}
Let $f \in \Cp\lr w$. Assume $a = \val{f}_1 = \norm{\alpha}_p$ for some $\alpha \in \Cp$. Therefore, $\alpha^{-1}f \in \OCp\lr w^\times$ has a non-zero reduction $\overline f := \alpha^{-1}f \bmod{\frakm_{\OCp}} \in \Fpbar[w]$. Show that $\val{f}_{1^-} = (a,\varepsilon^n)$ where $n$ is the order of vanishing of $\overline f$ at $w=0$.
\end{exercise}

\begin{exercise}\label{exercise:continuity-closed-conditions}
Let $\bval: A \rightarrow \Gamma \cup \{0\}$ be a valuation on a topological ring $A$. For $\gamma \in \Gamma_{\bval}$, define
\begin{equation*}
\overline{U}_\gamma = \{f \in A \mid \val f \leq \gamma \}.
\end{equation*}
\begin{enumerate}[label=(\alph*)]
\item Show that if $\bval$ is continuous, then each $\overline U_\gamma$ is open.
\item Show that if $\bval$ is non-trivial and each $\overline U_{\gamma}$ is open, then $\bval$ is continuous.\label{exercise-part:continuity-closed-condition-non-trivial}
\end{enumerate}
\end{exercise}
\begin{solution}
Part (a).  $\overline U_\gamma$ always contains $U_\gamma$. Therefore if each $U_\gamma$ is open, then so is each $\overline U_\gamma$.

Part (b). if $\bval$ is non-trivial, then there is $\gamma_0 < 1$ in the value group. In particular, for all $\gamma$ there is some $\gamma' = \gamma \gamma_0 < \gamma$. Thus, for all $\gamma$, we have:
\begin{equation*}
U_\gamma = \bigcup_{\gamma' < \gamma} \overline U_\gamma.
\end{equation*}

Part (c). Here is the issue. Suppose that there exists some $\gamma$ such that $\gamma < \delta$ for all $\delta \in \Gamma_{\bval}$. Then $U_\gamma = \supp(\bval)$ is just the support of the valuation. This is open only if $A$ is discrete.
\end{solution}

\begin{exercise}\label{exercise:trivial-valuation}
Let $A$ be a topological ring. Show that $\bval_{\triv}$ is continuous on $A$ if and only if $A$ is discrete.
\end{exercise}
\begin{solution}
If $A$ is discrete, anything is open. On the other hand, it $\bval_{\triv}$ is continuous then $\{0\} =  \supp(\bval_{\triv}) = \{a \in A \mid \val{a}_{\triv} < 1$ is open.
\end{solution}

\begin{exercise}\label{exercise:valuation-equivalence}
Let $A$ be a ring and $\bval_1, \bval_2$ two valuations on $A$. Show that the following conditions are equivalent:
\begin{enumerate}[label=(\roman*)]
\item $\bval_1$ is equivalent to $\bval_2$.
\item $\supp(\bval_1) = \frakp = \supp(\bval_2)$ and the induced valuations on $\Frac(A/\frakp)$ have the same valuation rings.\label{enum-part:valuation-equivalence-valuation-rings}
\item There exists an isomorphism $\varphi: \Gamma_{\bval_1} \xrightarrow{\simeq} \Gamma_{\bval_2}$ of totally ordered abelian groups making the following diagram commute
\begin{equation*}
\xymatrix{
 & A   \ar[dl]_-{\bval_1} \ar[dr]^-{\bval_2}\\
\Gamma_{\bval_1} \ar[rr]_-{\varphi} & & \Gamma_{\bval_2}
 }
\end{equation*}
\end{enumerate}
\end{exercise}
\begin{solution}
See \cite[Proposition/Definition 1.27]{Wedhorn-AdicSpaces}.
\end{solution}

\begin{exercise}\label{exercise:Qp-valuation-check}
Consider the $p$-adic numbers $\Qp$.
\begin{enumerate}[label=(\alph*)]
\item Show that if $\bval: \Qp \rightarrow \Gamma \cup \{0\}$ with $\Gamma$ cyclic, then $\bval$ is equivalent to either $\bval_\triv$ or $\bnorm_p$.
\item Show that any continuous valuation on $\Qp$ is equivalent to $\bnorm_p$.\label{exercise-part:Qp-valuation-check-continuity}
\item Show that \ref{exercise-part:Qp-valuation-check-continuity} also holds for $\Cp$ instead of $\Qp$.
\label{exercise-part:Qp-valuation-check-continuity-Cp}
\end{enumerate}
\end{exercise}

\begin{solution}
Part (a). Write $\Gamma = \gamma^{\Z}$ with $\gamma > 1$. We claim that $u \in \Zp^\times$ must have $\val{u} = 1$. First assume that $u \equiv 1 \pmod{p}$. Then, $x^n - u \equiv 0 \pmod{p}$ has simple roots for all $n$ such that $p\nmid n$. Therefore, by Hensel's lemma, each $u \in 1+p\Zp$ has an $n$-th root $u^{1/n}$. It follows that for each $p\nmid n$, the value $\val{u}$ also has $n$-th roots for all $p \nmid n$. That forces $\val{u} = 1$. This is only $1+p\Zp$. For something like $\val{2}$ you can instead look at $x^d - 2$ where $(d,(p-1)p) = 1$ and see that $x^d \equiv 2 \pmod{p}$ has a  unique solution to lift to a root $2^{1/d}$ and then run the same argument again. Therefore $\val{u} = 1$ on $\Zp^\times$. Finally, $\val{p}\leq 1$ and therefore is equivalent to $\bnorm{}_p$ when $\val{p}<1$ or $\val{}$ is equivalent to $\bval_{\triv}$ when $\val{p} = 1$.

Part (b). Let $R$ be the valuation ring of $\Qp$. If $\bval$ is continuous, then $R$ is open, since $R$ contains $\{x \in \Qp \mid \val{x} < 1\}$. Therefore, $R$ contains $p^n\Zp$ for some $n\gg 0$. But $R$ also contains $\Z$ and therefore $R$ contains $\Zp$. Once $R$ contains $\Zp$ you know that $R$ is either $\Zp$ or $\Qp$, corresponding to the continuous $p$-adic norm on the only hand and the discontinuous trivial norm. The only option is that $\bval$ is equivalent to $\bnorm_p$.

Part (c). Fix $\Qpbar$. If $K/\Qp$ is a finite extension then the $p$-adic norm is up to equivalence the unique continuous norm, by the same argument as in \ref{exercise-part:Qp-valuation-check-continuity}. If $\bnorm$ is continuous on $\Cp$ it must be the $p$-adic norm on all these subfields compatibly. I think that does it.
\end{solution}

\begin{exercise}\label{exercise:continuity-equivalent-valuations}
Let $\bval_1$ and $\bval_2$ be equivalent valuations on a topological ring. Show that $\bval_1$ is continuous if and only if $\bval_2$ is continuous.
\end{exercise}

\begin{solution}
This is easier with Exercise \ref{exercise:valuation-equivalence} clarifying the role of the value group. However, it can also be done directly from the definitions.

Suppose that $\bval_j$ is non-trivial (the trivial valuation's equivalence class consists only of the trivial valuation) and $\bval_1$ is continuous. For $\gamma = \frac{\val{g}_2}{\val{h}_2} \in \Gamma_{\bval_2}$ we define
\begin{equation*}
\overline U_{\gamma_2} = \{f \in A \mid \val{f}_2 \leq \frac{\val{g}_2}{\val{h}_2}\}.
\end{equation*}
By equivalence this is the same as
\begin{equation*}
\overline U_{\gamma_1} = \{f \in A \mid \val{f}_1 \leq \frac{\val{g}_1}{\val{h}_1}\}.
\end{equation*}
where $\gamma_1 = \frac{\val{g}_1}{\val{h}_1}$. Therefore, if $\bval_1$ is continuous, then $\overline U_{\gamma_1} = \overline U_{\gamma_2}$ is open. Then $\bval_2$ is continuous by Exercise \ref{exercise:continuity-closed-conditions}\ref{exercise-part:continuity-closed-condition-non-trivial}.
\end{solution}

\begin{exercise}\label{exercise:functorial-Cont(A)}
Let $\varphi: A \rightarrow B$ be a map of rings.
\begin{enumerate}[label=(\alph*)]
\item Show that the induced map $\Spv(B) \rightarrow \Spv(A)$ is continuous.
\item Show that $A$ and $B$ are topological rings and $\varphi$ is continuous, then the induced map $\Cont(B) \rightarrow \Cont(A)$ is well-defined (and continuous).
\end{enumerate}
\end{exercise}
\begin{solution}
\begin{enumerate}[label=(\alph*)]
\item Let $\Spv(B) \xrightarrow{\psi} \Spv(A)$ be the induced map. Thus for $f \in A$ and $y \in \Spv(B)$ we have
\begin{equation*}
\val{f(\psi(y))} := \val{\varphi(f)(y)}.
\end{equation*}
In particular, for $g,s \in A$ we have
\begin{equation*}
\psi^{-1}(U(\frac{g}{s}) = U(\frac{\varphi(g)}{\varphi(s)}).
\end{equation*}
\item Suppose $\bval: B \rightarrow \Gamma\cup \{0\}$ is a continuous valuation. Define $\bval' : A \rightarrow \Gamma \cup \{0\}$ by
\begin{equation*}
\val{f}' := \val{\varphi(f)}.
\end{equation*}
By definition we have an inclusion of value groups $\Gamma_{\bval'} \subseteq \Gamma_{\bval}$. Suppose $\gamma \in \Gamma_{\bval'}$. Then,
\begin{equation*}
U_\gamma^B = \{g \in B \mid \val{g} < \gamma\}
\end{equation*}
is open in $B$, and
\begin{equation*}
U_\gamma^A = \{f \in A \mid \val{f}' < \gamma\} = \varphi^{-1}(U_\gamma^B).
\end{equation*}
Therefore $U_\gamma^A$ is open in $A$, showing that $\bval'$ s continuous.
\end{enumerate}
\end{solution}

\begin{exercise}\label{exercise:adic-maps}
Let $A$ and $B$ be Huber rings and $\varphi : A \rightarrow B$ a ring homomorphism. We call $\varphi$ an \term{adic morphism} if there exists a pair of definition $(A_0,I)$ for $A$ and a ring of definition $B_0$ for $B$ such that $\varphi(A_0) \subseteq B_0$ and $(B_0,\varphi(I)B_0)$ is a pair of definition for $B$.
\begin{enumerate}[label=(\alph*)]
\item Show that if $\varphi$ is an adic morphism, then $\varphi$ is continuous.
\item Show that if $A$ is a Tate ring and $\varphi$ is continuous, then $B$ is a Tate ring and $\varphi$ is adic.\label{exercise-part:adic-maps-Tate-rings}
\item Suppose $g_1,\dotsc,g_r,s \in A$ generate an open $A$-ideal. Show that the localization map $A \rightarrow A(\frac{g_1,\dotsc,g_r}{s})$ is adic.
\item Suppose $\varphi$ is continuous. Show that $\varphi$ is adic if and only if for {\em any} rings of definitions $A_0 \subseteq A$ and $B_0 \subseteq B$, if $\varphi(A_0) \subseteq B_0$ and $I \subseteq A_0$ is an ideal of definition, then $\varphi(I)B_0 \subseteq B_0$ is an ideal of definition.
\end{enumerate}
\end{exercise}

\begin{solution}
\begin{enumerate}[label=(\alph*)]
\item 
\item 
\item 
\end{enumerate}
\end{solution}

\begin{exercise}\label{exercise:adic-maps-functoriality}
Let $A$ and $B$ be Huber rings and $\varphi: A\rightarrow B$ an adic morphism. Let $\psi: \Cont(B) \rightarrow \Cont(A)$ be the induced map. Show that if $U \subseteq \Cont(A)$ is a rational subset, then $\psi^{-1}(U) \subseteq \Cont(B)$ is also a rational subset.
\end{exercise}

\begin{solution}
As we saw Exercise \ref{exercise:continuity-equivalent-valuations}, for $g,s \in A$ we have
\begin{equation*}
\psi^{-1}(U(\frac{g}{s}) = U(\frac{\varphi(g)}{\varphi(s)}).
\end{equation*}
To solve the exercise, we therefore need to show that if $g_1,\dotsc,g_r,s \in A$ generate an open $A$-ideal then $\varphi(g_1),\dotsc,\varphi(g_r),\varphi(s) \in B$ generate an open $B$-ideal.

Let $B_0$ be any ring of definition in $B$ and choose $A_0 \subseteq A$ such that $\varphi(A_0) \subseteq B_0$. Let $\fraka$ be the ideal generated by $g_1,\dotsc,g_r,s$ in $A$. By assumption, there exists an ideal of definition $I \subseteq A_0$ such that $I \subseteq \fraka$. By part (c) of Exercise \ref{exercise:adic-maps}, the ideal $\varphi(I)B_0 \subseteq B_0$ is an open in $B$, and clearly $\varphi(I)B_0 \subseteq \varphi(\fraka)$.
\end{solution}

\begin{exercise}\label{exercise:supports-closed}
Let $A$ be a ring.
\begin{enumerate}[label=(\alph*)]
\item Show that if $x \in \Spv(A)$, then $\supp(x)$ is a prime ideal in $A$.
\item Show that if $x,y \in \Spv(A)$ and $y \in \overline{\{x\}}$, then $\supp(x) \subseteq \supp(y)$.\label{exercise-part:supports-closed-specialization}
\item Assume $A$ is a topological ring and $x \in \Cont(A)$. Show that $\supp(x)$ is closed in $A$.
\end{enumerate}
\end{exercise}
\begin{solution}
\begin{enumerate}[label=(\alph*)]
\item Suppose $f,g \in A$ and $\val{f(x)g(x)} = 0$. The only solutions to $\gamma_1\gamma_2 = 0$ require one of the $\gamma_i$ to be zero. So, if $\val{f(x)} \neq 0$, then $\val{g(x)} = 0$.
\item Let $U_\gamma = \{f \in A \mid \val{f(x)} < \gamma\}$. If $x \in \Cont(A)$, then $U_\gamma$ is open for all $\gamma \in \Gamma$. The support is given by
\begin{equation*}
\supp(x) = A - \bigcup_{\gamma \in \Gamma} U_{\gamma}.
\end{equation*}
Therefore, the support is closed.
\end{enumerate}
\end{solution}

\begin{exercise}\label{exercise:valuation-to-valuation-ring}
Let $K$ be a field. A \term{valuation subring} of $K$ is a subring $A \subseteq K$ such that if $\alpha \neq 0$ in $K$ then either $\alpha \in A$ or $\alpha^{-1} \in A$. Let $x \in \Spv(K)$. Define
\begin{equation*}
A_x = \{\alpha \in K \mid \val{\alpha(x)} \leq 1\}.
\end{equation*}
\begin{enumerate}[label=(\alph*)]
\item Show that $A_x$ is a valuation subring of $K$.
\item Show directly that $\frakm_x = \{\alpha \in A_x \mid \val{\alpha(x)} < 1\}$ is an ideal in $A_x$, and it consists of all the non-units in $A_x$. Conclude that $A_x$ is a local ring.
\item Show that $y \in \overline{\{x\}}$ within $\Spv(K)$ if and only if $A_y \subseteq A_x$.\label{exercise-part:valuation-to-valuation-ring-closure}
\end{enumerate}
\end{exercise}
\begin{solution}
Part (a). If $\alpha \in K^\times$ then $\val{\alpha(x)} \neq 0$. If $\val{\alpha(x)} > 1$ then the fact that the value group $\Gamma_x$ is totally ordered implies that $\val{\alpha^{-1}(x)} = \val{\alpha(x)}^{-1} < 1$. Therefore either $\alpha$ or $\alpha^{-1}$ lies n $A_x$.

Note that a sub-basis of rational subsets of $\Spv(K)$ can be defined by $U_\alpha = \{x \in \Spv(K) \mid \val{\alpha(x)}\leq 1\}$. Therefore we have
\begin{align*}
x \in \overline{\{y\}} &\iff \text{for all $\alpha$}, (x \in U_\alpha \implies y \in U_\alpha)\\
&\iff \text{for all $\alpha$)} (\alpha \in A_x \implies \alpha \in A_y)\\
&\iff A_x \subseteq A_y.
\end{align*}
\end{solution}

\begin{exercise}\label{exercise:valuation-ring-to-valuation}
Let $K$ be a field and $A \subseteq K$ a valuation subring. Set $
\Gamma_A = K^\times/A^\times$.
\begin{enumerate}[label=(\alph*)]
\item Show that $\Gamma_A $ is a totally ordered abelian group under the ordering
\begin{equation}\label{eqn:valuation-ring-value-group}
\alpha A^\times \leq \beta A^\times \iff \alpha A \subseteq \beta A.
\end{equation}
\item Show that the function
\begin{equation*}
\val{\alpha(x_A)} = \begin{cases}
 \alpha A^\times & \text{if $\alpha \neq 0$;}\\
 0 & \text{if $\alpha = 0$,}
 \end{cases}
\end{equation*}
defines a valuation $x_A \in \Spv(K)$.
\end{enumerate}
\end{exercise}

\begin{solution}
Part (a). It is clear that $\alpha A^\times = \beta A^\times$ if and only if $\alpha A^\times \leq \beta A^\times$ and $\beta A^\times \leq \alpha A^\times$. Suppose that $\alpha A^\times \neq \beta A^\times$. Let $\gamma = \alpha/\beta$. Since $A$ is a valuation ring, either $\gamma \in A$ or $\gamma^{-1} \in A$. Suppose $\gamma \in A$. Then $\alpha A \subseteq \beta A$. Otherwise, if $\gamma^{-1} \in A$, then $\beta A \subseteq \alpha A$. This shows the order is a total order.

Part (b). It is clear that $0\mapsto 0$ and $1 \mapsto 1$. Suppose $\alpha$ and $\beta$ are given. Without loss of generality, since $A$ is a valuation ring, we suppose that $\alpha A \subseteq \beta A$. Then, 
\begin{equation*}
(\alpha + \beta)A \subseteq \beta A.
\end{equation*}
This proves the triangle inequality. The multiplicative property is clear.
\end{solution}

\begin{exercise}\label{exercise:valuation-valuation-ring-bijection}
Show that the maps $x \mapsto A_x$ and $A \mapsto x_A$ from Exercises \ref{exercise:valuation-to-valuation-ring} and \ref{exercise:valuation-ring-to-valuation} are inverse bijections between $\Spv(K)$ and the set of valuation subrings $A \subseteq K$.
\end{exercise}

\begin{solution}
First we have
\begin{equation*}
A_{x_A} = \{\alpha \in K \mid \alpha A \subseteq A\} = A.
\end{equation*}
(If $\alpha A \subseteq A$ then $\alpha \cdot 1 \in A$. Conversely, $A \cdot A \subseteq A$.)

Second, we have
\begin{equation*}
|\alpha(x_{A_x})| = \alpha A_x
\end{equation*}
Therefore,
\begin{align*}
|\alpha(x_{A_x})| \leq |\beta(x_{A_x})| &\iff \alpha A_x \subseteq \beta A_x \\
&\iff \frac{\alpha}{\beta} \in A_x\\
&\iff \val{\alpha(x)} \leq \val{\beta(x)}.
\end{align*}
We have shown that $x$ is equivalent to $x_{A_x}$.
\end{solution}

\begin{exercise}\label{exercise:fiber-closed-point}
Show that if $x \in \Cont(\Cp\lr w)$ and $\supp(x) = \lr{w-\alpha}$ for some $\alpha \in \OCp$, then $x = x_\alpha$.
\end{exercise}
\begin{hint}
Use the criterion \ref{enum-part:valuation-equivalence-valuation-rings} in Exercise \ref{exercise:valuation-equivalence} and Exercise \ref{exercise:Qp-valuation-check}\ref{exercise-part:Qp-valuation-check-continuity-Cp}.
\end{hint}

\begin{solution}
Clearly $\supp(x) = \supp(x_y)$. By Exercise \ref{exercise:valuation-equivalence}, it suffices to show the valuation rings of $x$ and $x_y$ are the same. But these valuation rings are valuation rings within $\Cp$ for some continuous valuation. There is only one --- it is $\OCp$.
\end{solution}

\begin{exercise}\label{exercise:convex-subgroups}
Let $\Gamma$ be a totally ordered abelian group. A subgroup $\Delta$ is \term{convex} in $\Gamma$ if, for all $\delta_1,\delta_2 \in \Delta$ and $\gamma \in \Gamma$ such that $\delta_1 \leq \gamma \leq \delta_2$, then $\gamma \in \Delta$. 

Assume now that $\Delta$ is convex in $\Gamma$. Define $\leq$ on $\Gamma/\Delta$ by
\begin{equation*}
\gamma \Delta \leq \gamma' \Delta \iff \gamma \leq \gamma' \delta \text{ for some $\delta \in \Delta$.}
\end{equation*}
\begin{enumerate}[label=(\alph*)]
\item Show that $\leq$ defines a total order on $\Gamma/\Delta$ and $\Gamma \rightarrow \Gamma/\Delta$ is a morphism of totally ordered abelian groups.\label{exercise-part:convex-subgroups-total-order}
\item Suppose that the exact sequence
\begin{equation*}
1 \rightarrow \Delta \xrightarrow{\iota} \Gamma \xrightarrow{\pi} \Gamma/\Delta \rightarrow 1
\end{equation*}
is split as totally ordered abelian groups. Let $s : \Gamma/\Delta \rightarrow \Gamma$ be a splitting. Show that the group isomorphism
\begin{equation*}
\Gamma/\Delta \times \Delta \xrightarrow{(s,\iota)} \Gamma
\end{equation*}
is an isomorphism of totally ordered abelian groups when the product group is given the {\em lexicographic} order.\label{exercise-part:convex-subgroups-splitting}
\end{enumerate}
\end{exercise}
\begin{hint}
In \ref{exercise-part:convex-subgroups-total-order}, convexity is only used to show the ``strongly symmetric'' property of an order relation --- in this case to prove that $\gamma\Delta = \gamma'\Delta$ if and only if both $\gamma\Delta \leq \gamma'\Delta$ and $\gamma'\Delta \leq \gamma\Delta$.
\end{hint}
\begin{solution}
\begin{enumerate}[label=(\alph*)]
\item The reflexive property ($\gamma\Delta \leq \gamma\Delta$), the transitive property ($\gamma\Delta \leq \gamma'\Delta$ and $\gamma'\Delta \leq \gamma''\Delta$ implies $\gamma\Delta \leq \gamma''\Delta$) are both clear. We now check the strongly symmetric property. Suppose $\gamma\Delta \leq \gamma'\Delta$ and $\gamma'\Delta \leq \gamma\Delta$. Thus $\gamma \leq \gamma'\delta$ for some $\delta \in \Delta$ and $\gamma'\leq \gamma'\eta$ for some $\eta \in \Delta$. It follows that
\begin{equation*}
\eta^{-1} \leq \frac{\gamma}{\gamma'} \leq \delta.
\end{equation*}
Since $\Delta$ in convex, we conclude $\gamma/\gamma' \in \Delta$. We have shown that $\leq$ is an order relation on $\Gamma/\Delta$. Finally we show it is a total order. Suppose $\gamma,\gamma' \in \Gamma$. Then if $\gamma \leq \gamma'$ we have $\gamma\Delta \leq \gamma'\Delta$. Otherwise $\gamma' \leq \gamma$ and in that case $\gamma'\Delta \leq \gamma\Delta$.
\item This is clear.
\item Suppose $\gamma\Delta < 1$ in $\Gamma/\Delta$. Thus $\gamma < \delta$ for some $\delta \in \Delta$. Replacing $\gamma$ by $\gamma' = \gamma\delta^{-1}$ we have $\gamma\Delta = \gamma'\Delta$ and $\gamma' < 1$.
\end{enumerate}
\end{solution}

\begin{exercise}
Let $\varphi: \Gamma \rightarrow \Gamma'$ be a morphism of totally ordered abelian groups.
\begin{enumerate}[label=(\alph*)]
\item Show that $\ker(\varphi) \subseteq \Gamma$ is a convex subgroup.
\item Show that $\im(\varphi) \subseteq \Gamma'$ is a totally ordered abelian group.
\item Show that $\varphi: \Gamma/\ker(\varphi) \xrightarrow{\simeq} \im(\varphi)$ as totally ordered abelian groups.
\end{enumerate}
\end{exercise}

\begin{exercise}\label{exercise:vertical-specialization}
Let $A$ be a ring. For $x,y \in \Spv(A)$, we say $y$ is a \term{vertical specialization} of $x$ if
\begin{enumerate}[label=(\roman*)]
\item $y \in \overline{\{x\}}$, and
\item $\supp(y) = \supp(x)$.
\end{enumerate}
We also say that $x$ is a \term{vertical generization} of $y$.
\begin{enumerate}[label=(\alph*)]
\item Show that if $x$ is a vertical generization of $y$, then there is a natural quotient map $\Gamma_{y} \twoheadrightarrow \Gamma_x$ of totally ordered abelian groups.\label{exercise-part:vertical-specialization-quotient}
\item If $y \in \Spv(A)$, show that
\begin{align*}
\{\text{vertical generizations of $y$}\} &\rightarrow \{\Delta \subseteq \Gamma_y \text{ convex subgroups}\}\\
x &\mapsto \ker(\Gamma_y \twoheadrightarrow \Gamma_x)
\end{align*}
is a bijection.\label{exercise-part:vertical-specialization-bijection}
\item Suppose $A$ is a topological ring and $y \in \Cont(A)$. Show that if $x$ is a vertical generization then either $x \in \Cont(A)$ or $x$ is the trivial valuation modulo $\supp(y)$.\label{exercise-part:vertical-specialization-continuous}
\end{enumerate}
\end{exercise}
\begin{hint}
If $\supp(x) = \supp(y)$, then $x$ and $y$ can be viewed as valuations on the same field. Then, use Exercises \ref{exercise:valuation-to-valuation-ring}-\ref{exercise:valuation-valuation-ring-bijection} for \ref{exercise-part:vertical-specialization-quotient}. For \ref{exercise-part:vertical-specialization-continuous}, use Exercise \ref{exercise:continuity-closed-conditions}.
\end{hint}

\begin{solution}
See \cite[Proposition 4.2.4]{Conrad-PerfectoidSeminarNotes} and \cite[Theorem 8.2.1]{Conrad-PerfectoidSeminarNotes}.
\end{solution}

\section{Constructions with adic spectra}\label{section:Aplus}

Let $A$ be a Huber ring. This lecture focuses on the adic spectrum
\begin{equation}\label{eqn:Spa-A-Aplus}
\Spa(A,A^+) = \{x \in \Cont(A) \mid \val{f(x)} \leq 1 \text{ for all $f \in A^+$}\},
\end{equation}
which is the topic of H\"ubner's initial lecture \cite{Hubner-AdicSpaces}.

The ring $A^+$ is a \term{ring of integral elements}, which means $A^+ \subseteq A^{\circ}$ and $A^+$ is open and integrally closed in $A$. Proposition \ref{proposition:power-bounded-union-rings-of-definition} shows $A^{\circ}$ is always a ring of integral elements. A pair $(A,A^+)$ is called a \term{Huber pair}. We make $X=\Spa(A,A^+)$ a topological space via the inclusion $X \subseteq \Cont(A)$. 

Our initial goal in Section \ref{subsec:bounds} is to discuss the bounds imposed on adic spectra. In Sections \ref{subsubsec:rational-localization-huber-pairs}-\ref{subsubsec:completion-huber-pairs}, we describe how localization, tensor products, and completions impact Huber pairs. Then, in Section \ref{subsec:adic-nullstellensatz}, we prove that
\begin{equation}\label{eqn:A+-equality}
A^+ = \{f \in A \mid \val{f(x)}\leq 1 \text{ for all $x \in X$}\}
\end{equation}
when $X = \Spa(A,A^+)$.

The bulk of our energy is spent on \eqref{eqn:A+-equality}. It reminds one of the statement $``I(V(I)) = I$ if $I$ is a radical ideal'', which one encounters as the Nullstellensatz in algebraic geometry. For this reason, Conrad even refers to \eqref{eqn:A+-equality} as an {\em adic Nullstellensatz} in \cite[Theorem 2.25]{Conrad-AdicMSRILectures}. We observe here, in further support of this name, that the first step in the proof we present involves localization in a way reminiscent of  the ``Rabinowitsch trick'' used in proofs of the Nullstellensatz.

Presenting \eqref{eqn:A+-equality} seems near optimal, in terms of satisfaction, for a proof using only what we explained in Sections \ref{section:huber} and \ref{section:valuation}. It provides a chance to use rational localization of Huber rings and we also get to introduce a  technique called horizontal specialization, which  complements the vertical specializations described in Section \ref{subsec:valuation-final-example} and Exercise \ref{exercise:vertical-specialization}.

One topic we will not address is the role of the containment $A^+ \subseteq A^{\circ}$. It is not required for \eqref{eqn:A+-equality}. The containment is crucial in Huber's study \cite[Proposition 3.6]{Huber-ContinuousValuations} of whether or not adic spectra are empty, where he proves that $\Spa(A,A^+) = \emptyset$ if and only if $\{0\}$ is dense in $A$. (Recall $\Spec(A) = \emptyset$ if and only if $A = \{0\}$.)\label{pageref:emptiness}

\subsection{Bounds on adic spectra}\label{subsec:bounds}

Our initial goal is clarifying how adic spectra generalize the closed unit disc
\begin{equation*}
D = \{x \in \Cont(\Cp\lr w) \mid \val{w(x)} \leq 1\}.
\end{equation*}
In defining $D$, we impose a bound $\val{w(x)}\leq 1$ on only $w \in \Cp\lr w$, while \eqref{eqn:Spa-A-Aplus} imposes bounds on all of $A^+$. There {\em is} a difference in the style of definition.  Examining the definition of rational subsets, imposing bounds on single functions, or a short list of them, is natural. Instead of $\Spa(A,A^+)$, one might consider the more basic object
\begin{equation}\label{eqn:Spa-A-Sigma}
\Spa(A,\Sigma) = \{x \in \Cont(A) \mid \val{f(x)} \leq 1 \text{ for all $f \in \Sigma$}\},
\end{equation}
where $\Sigma \subseteq A$ is any subset.

There are two basic remarks. First, sets defined as \eqref{eqn:Spa-A-Sigma} present technical challenges. Writing proofs would require learning how to track $\Sigma$ while applying algebro-topological constructions to $A$. It would be similar to tracking functions that define a projective algebraic variety rather than an ideal sheaf. Second, there is no loss of generality in focusing only on $\Sigma = A^+$ where $A^+$ is open and integrally closed in $A$. Indeed, for all $\Sigma$, there exists an open and integrally closed ring $A^+ \supseteq \Sigma$ such that if $\val{f(x)} \leq 1$ for all $f \in \Sigma$, then $\val{f(x)} \leq 1$ for all $f \in A^+$. See Exercises \ref{exercise:top-nil-always-in-A+}-\ref{exercise:pass-to-integral-closure}. Therefore, if $\Sigma \subseteq A^{\circ}$, then
\begin{equation*}
\Spa(A,\Sigma) = \Spa(A,A^+)
\end{equation*} 
where $A^+$ is a ring of integral elements. In the case $A = \Cp\lr w$ and $\Sigma = \{w\}$, the relevant ring is even $A^+ = A^{\circ} = \OCp\lr w$. Therefore, 
\begin{equation}\label{eqn:D=Spa}
D = \{x \in \Cont(\Cp\lr w) \mid \val{w(x)} \leq 1\} = \Spa(\Cp\lr w, \OCp\lr w).
\end{equation}
See Exercise \ref{exercise:unit-disc-closure}.

For the record, a general version of Huber's theorem Theorem \ref{thm:hubers-main-theorem-D} is:
\begin{theorem}[{Huber, \cite[Theorem 3.5]{Huber-ContinuousValuations}}]\label{thm:hubers-main-theorem}
Let $(A,A^+)$ be a Huber pair. Then, $\Spa(A,A^+)$ is quasi-compact and quasi-separated. If $g_1,\dotsc,g_r,s \in A$ generates an open $A$-ideal, then the rational subset $U(\frac{g_1,\dotsc,g_r}{s})$ is quasi-compact as well.
\end{theorem}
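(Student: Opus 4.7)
My plan is to prove Huber's theorem by establishing $\Spa(A,A^+)$ as a spectral space: quasi-compactness and quasi-separatedness drop out at once, as does quasi-compactness of rational subsets. Concretely, I would realize $\Spv(A)$ as a closed subspace of the compact product $\{0,1\}^{A\times A}$ via $x\mapsto \bigl((f,g)\mapsto \mathbf{1}_{\val{f(x)}\leq \val{g(x)}}\bigr)$. The valuation axioms (multiplicativity, the ultrametric inequality, $\val{0}=0$, $\val{1}=1$, transitivity of $\leq$) are all universally quantified and hence cut out a closed subset, so Tychonoff yields compactness in the resulting ``constructible'' topology, generated by the rational-style sets $U(\frac{g}{s})$ together with their complements.

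Next I would show $\Spa(A,A^+)$ is closed (more precisely, pro-constructible) inside $\Spv(A)$ in the constructible topology. The $A^+$-bounds $\val{f(x)}\leq 1$ are clopen conditions, so their conjunction is constructibly closed. For continuity I would invoke Proposition \ref{proposition:contiunity-criteria}, fixing a pair of definition $(A_0,I=(f_1,\dotsc,f_d))$ and replacing continuity by: (a) each $\val{f_i(x)}$ is co-final in $\Gamma_{\bval}$, and (b) $\val{(ff_i)(x)}\leq 1$ for every $f\in A_0$ and every $i$. Condition (b) is an intersection of clopens, hence constructibly closed. Condition (a) unwinds, over pairs $g,h\in A$, to the condition ``$\val{h(x)}=0$ or there exists $n\geq 1$ with $\val{(f_i^n h)(x)}<\val{g(x)}$'', which in the constructible topology is a countable intersection of countable unions of clopen sets. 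So $\Spa(A,A^+)$ is an intersection of constructible sets, i.e.\ pro-constructible. As a closed subspace of a compact space, it is compact in the constructible topology; since the standard topology is coarser, $\Spa(A,A^+)$ is quasi-compact.

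For a rational subset $U=U(\frac{g_1,\dotsc,g_r}{s})$, I would build the Huber pair $(A',A'^+)$ with $A' = A(\frac{g_1,\dotsc,g_r}{s})$ (Theorem \ref{theorem:rational-localization}) and $A'^+$ the integral closure in $A'$ of $A^+[\frac{g_1}{s},\dotsc,\frac{g_r}{s}]$. The universal property of the rational localization supplies a natural continuous map $\Spa(A',A'^+)\to \Spa(A,A^+)$; I would verify it is a homeomorphism onto $U$ by matching the bound and support (positivity) conditions on both sides. Then quasi-compactness of $U$ follows by applying the previous two steps to $(A',A'^+)$. Quasi-separatedness of $\Spa(A,A^+)$ is then immediate because rational subsets form a basis of opens stable under finite intersection, and each is now known to be quasi-compact.

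The hard part will be checking pro-constructibility of the continuity condition in step two. Co-finality in $\Gamma_{\bval}$ is intrinsic to each valuation, and a naive encoding yields a $G_\delta$ rather than a closed set in the constructible topology. Proposition \ref{proposition:contiunity-criteria} is the essential ingredient: it lets me trade co-finality in the value group for conditions on finitely many elements $f_1,\dotsc,f_d$ of $A$, and these are genuinely encoded by clopen sets in the constructible topology. The remainder of the proof is essentially bookkeeping plus the universal property of rational localization; the topological analysis of continuity is where almost all of the work sits.
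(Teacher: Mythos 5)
The paper states this theorem without proof (it explicitly moves on to other constructions), so there is no internal argument to compare against; judged on its own terms, your proposal has a genuine gap at precisely the step you flag as ``the hard part.'' The embedding of $\Spv(A)$ into $\{0,1\}^{A\times A}$, the compactness of the constructible topology, the reduction of a rational subset to $\Spa$ of the localized pair, and the deduction of quasi-separatedness from quasi-compactness of rational subsets are all sound. The problem is the claim that Proposition \ref{proposition:contiunity-criteria} renders the continuity locus pro-constructible. It does not: the proposition trades continuity for the bounds $\val{ff_i(x)}<1$ (which are indeed clopen conditions) \emph{together with} co-finality of each $\val{f_i(x)}$ in $\Gamma_{\bval}$, and that co-finality condition still unwinds to ``for every $\gamma\in\Gamma_{\bval}$ there exists $n$ with $\val{f_i(x)}^n<\gamma$.'' Encoded in $\{0,1\}^{A\times A}$ this is an intersection (over pairs $g,h$ representing $\gamma$) of unions (over $n$) of clopen sets, i.e.\ an intersection of constructible-open sets --- exactly the $G_\delta$-type set you acknowledge, and not a closed subset of the constructible topology. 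Such a set need not be quasi-compact (remove a countable dense subset from the Cantor set), so quasi-compactness of $\Spa(A,A^+)$ does not follow from your step two as written.

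The standard repair, which is Huber's actual argument and which the paper points to in Section \ref{subsection:final-comments}, is to interpose the auxiliary space $\Spv(A,I)$ of valuations satisfying a support/co-finality condition relative to an ideal of definition $I$. One does \emph{not} cut this space out of $\Spv(A)$ by closed conditions; instead one exhibits it as the image of $\Spv(A)$ under the horizontal-specialization retraction $x_0\mapsto x$ of Section \ref{subsubsec:specialization} and Exercise \ref{exercise:horizontal-specializations} (restricting the valuation to a suitable convex subgroup of its value group), checks this retraction is continuous for both the ordinary and the constructible topologies, and uses that a continuous image of a quasi-compact space is quasi-compact. Inside $\Spv(A,I)$ the co-finality needed for continuity is built into the definition of the space, so $\Cont(A)$, and then $\Spa(A,A^+)$, are carved out there by the genuinely clopen conditions $\val{a(x)}<1$ for $a\in I$ and $\val{f(x)}\leq 1$ for $f\in A^+$, hence are pro-constructible and spectral. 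With that insertion, your remaining steps (rational localization, quasi-separatedness) go through.
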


Technically, if you look at Huber's theorem you will find that $\Spa(A,A^+)$ is a \term{spectral space} and that each rational subset is constructible. Spectral space were defined by Hochster \cite{Hochster-Spectral}. The theorem we stated is just a portion of Huber's theorem, since spectral spaces are, in particular, quasi-compact and quasi-separated and, in addition, constructible subsets of quasi-compact spaces are quasi-compact.

Instead of proving this theorem, we move on to explain how constructions with Huber rings extend to constructions with Huber pairs.

\subsection{Construction:\ rational localization}\label{subsubsec:rational-localization-huber-pairs}

Suppose $(A,A^+)$ is a Huber pair and $g_1,\dotsc,g_r ,s \in A$ generate an open $A$-ideal. In Section \ref{subsection:rational-localization} we defined the rational localization
\begin{equation*}
B = A(\frac{g_1,\dotsc,g_r}{s}).
\end{equation*}
The underlying ring is $B = A[\frac{1}{s}]$. Suppose $(A_0,I)$ is a pair of definition. A ring of definition for $B$ is equal to $B_0 = A_0[\frac{g_1}{s},\dotsc,\frac{g_r}{s}]$, with ideal of definition $J= IB_0$. The Huber ring $B$ is independent of the choice of $(A_0,I)$. Considering $A^+$, we assume without loss of generality that $A_0 \subseteq A^+$. (See Exercise \ref{exercise:ring-of-definitions-contained-in-any-open}.) Then, we define
\begin{equation*}
B^+ = \text{integral closure of } A^+[\frac{g_1}{s},\dotsc,\frac{g_r}{s}] \text{ within $B$}.
\end{equation*}

We claim $B^+$ is a ring of integral elements for $B$. First, $A_0 \subseteq A^+$ and so $B_0 \subseteq B^+$. Therefore, $B^+$ is open in $B$. Second, $B^+$ is integrally closed in $B$ by construction. Finally, we claim that $B^+\subseteq B^{\circ}$. To see this, start by noting that for each $i$, we have $\frac{g_i}{s} \in B_0$, and $B_0 \subseteq  B^{\circ}$ by Proposition \ref{proposition:power-bounded-union-rings-of-definition}. By construction of $B$, if $A_0' \subseteq A$ is {\em any} ring of definition, then the image of $A_0'$ in $B$ is contained in some ring of definition $B_0'$. Therefore, {\em loc.\ cit.}\ implies the image of $A^{\circ}$ in $B$ is contained in $B^{\circ}$. Since $A^+ \subseteq A^{\circ}$, we have shown $B^{\circ}$ contains $A^+[\frac{g_1}{s},\dotsc,\frac{g_r}{s}]$. Finally, $B^+\subseteq B^{\circ}$ because $B^{\circ}$ is itself integrally closed by Proposition \ref{proposition:power-bounded-union-rings-of-definition}, once again. (The integral closure step is generally required. See Exercise \ref{exercise:localization-must-integrally-close}.)

Rational localizations are related to rational subsets, as we now explain. Suppose that $(A,A^+) \rightarrow (B,B^+)$ is the natural morphism of Huber pairs implicit in the prior paragraph. In the middle of \cite[Lemma 1.5(ii)]{Huber-Generalization}, it is proven that we in fact have a natural commuting diagram
\begin{equation}\label{eqn:rational-factorization}
\xymatrix{
\Spa(B,B^+) \ar@{.>}[dr]_-{\cong} \ar[r] & \Spa(A,A^+)\\
 & U(\frac{g_1,\dotsc,g_r}{s}) \ar[u]
}
\end{equation}
where the diagonal arrow is a homeomorphism. More precisely, the rational subsets in $\Spa(B,B^+)$ correspond bijectively with the rational subsets contained in $U$, via the diagonal arrow.  How difficult is the proof? Based on Sections \ref{section:huber} and \ref{section:valuation}, we {\em could} prove that \eqref{eqn:rational-factorization} exists, the diagonal arrow is a bijection, and that the pre-image of a rational subset in $\Spa(A,A^+)$ is rational in $\Spa(B,B^+)$. See Exercise \ref{exercise:rational-domain-factorization}-\ref{exercise:rational-domain-factorization-false}. It is more difficult to show a rational subset in $\Spa(B,B^+)$ maps onto a rational subset in $U$. The issue is that rational subsets of $\Spa(B,B^+)$ are built from elements generating an open $B$-ideal. After clearing denominators, there is no reason for them to generate an open $A$-ideal. The proofs we know rely on knowing {\em a priori} that $U$ is quasi-compact, which is part of Theorem \ref{thm:hubers-main-theorem}. The idea is explained in Exercise \ref{exercise:minimal-modulus-principal}.

\subsection{Construction:\ tensor products (of Tate rings)}\label{subsubsec:tensor-product}
In this section, we explain tensor products $B \otimes_A C$ when $A$ is a Tate ring. We extend the construction to \term{Tate--Huber pairs} (Huber pairs with first entry a Tate ring).

Suppose that $A$ is a Tate ring and $B$ and $C$ are Huber rings with continuous ring morphisms $A \xrightarrow{\varphi} B$ and $A \xrightarrow{\psi} C$. Then, $B$ and $C$ are also Tate rings because the image of a pseudo-uniformizer for $A$ under a continuous ring morphism is a pseudo-uniformizer in the target. See Exercise \ref{exercise:adic-maps}.

We form $R := B \otimes_A C$ algebraically, and now we make it a topological ring. Choose rings of definitions $B_0 \subseteq B$ and $C_0 \subseteq C$. Since $\varphi^{-1}(B_0) \cap \psi^{-1}(C_0)$ is an open subring, it contains a ring of definition $A_0$ (Exercise \ref{exercise:ring-of-definitions-contained-in-any-open}). We define
\begin{equation*}
R_0 := \img(B_0\otimes_{A_0} C_0\rightarrow B\otimes_A C).
\end{equation*}
Now choose $\varpi \in A_0$, a pseudo-uniformizer for $A$. We equip $R_0$ with the $\varpi R_0$-adic topology, making $R_0$ into a topological ring. We make $R$ a ring with a topology by declaring $R_0$ is open in $R$. We leave it as Exercise \ref{exercise:tate-ring-tensors} that this makes $R$ into a topological ring, which is indeed a Tate ring. We dealt with a similar situation in Section \ref{subsection:rational-localization} with rational localizations. A universal property confirms that the definition of $R$ does not depend on the choices made. Namely, the $A$-algebra maps
\begin{align*}
\id \otimes 1 : B &\rightarrow R\\
1 \otimes \id : C &\rightarrow R
\end{align*}
are {\em continuous}, and they are initial with respect to pairs of continuous $A$-algebras maps $B \rightarrow S$ and $C \rightarrow S$.

Extending to Tate--Huber pairs goes like this. If we start with $(A,A^+) \xrightarrow{\varphi} (B,B^+)$ and $(A,A^+) \xrightarrow{\psi} (C,C^+)$, we may define
\begin{equation*}
R^+ = \text{ the integral closure of } \img(B^+ \otimes_{A^+} C^+\rightarrow B \otimes_A C) \subseteq R.
\end{equation*}
This makes $R^+$ integrally closed in $R$. In the construction, we could have assumed $B_0 \subseteq B^+$ and $C_0 \subseteq C^+$ from the start, and so $A_0 \subseteq A^+$. Thus $R^+$ is open. It is left as Exercise \ref{exercise:tensor-power-bounded} that the image of $B^{\circ}\otimes_{A^{\circ}} C^{\circ}$ is contained in $R^{\circ}$, from which the containment $R^+ \subseteq R^{\circ}$ follows.

The difficulty when $A$ is not a Tate ring is defining the topological ring structure on $R$. To do this in general one imposes the condition that $A\rightarrow B$ and $A \rightarrow C$ are adic morphisms of Huber rings, as in Exercise \ref{exercise:adic-maps}. In another direction, an anonymous referee points out that future students may learn how to apply Clausen and Scholze's theory of condensed mathematics and analytic rings to streamline the construction of tensor products.

\subsection{Construction:\ completions}\label{subsubsec:completion-huber-pairs}

The goal here is defining the completion of a Huber ring. The delicate point is that completions are topologically defined with respect to the underlying abelian group, so the algebraic structure of ring needs to be constructed by hand.

Let $A$ be a Huber ring and $(A_0,I)$ a pair of definition. The additive subgroups $I,I^2,I^3,\dotsc$ are a neighborhood basis of zero in $A$. They are also ideals in $A_0$. We can form the ring-theoretic completion 
\begin{equation*}
\widehat{A_0} = \varprojlim_{n} A_0/I^n,
\end{equation*}
which becomes a complete topological ring. Recall, topologically, the quotient ring $A_0/I^n$ is discrete and then $\widehat{A_0}$ is given the subspace topology via the inclusion
\begin{equation*}
\widehat{A_0} \hookrightarrow \prod_{n=1}^\infty A_0/I^n.
\end{equation*}
Since the ideal $I$ is {\em finitely generated}, it is a theorem (see \cite[Tag 05GG]{stacks-project}) that this topology on $\widehat{A_0}$ coincides with the topology defined by the ideal $I\widehat{A_0} \subseteq \widehat{A_0}$, and
\begin{equation*}
\widehat{A_0} \cong \varprojlim_{n} \widehat{A_0}/I^n\widehat{A_0}.
\end{equation*}
Replacing $A_0$ by $A$, the only choice we have for forming the completion is
\begin{equation}\label{eqn:A-completion-topological}
\widehat A = \varprojlim_n A/I^n \hookrightarrow \prod_{n=1}^{\infty} A/I^n.
\end{equation}
This is a complete topological group. The individual factors $A/I^n$ are not rings, but $\widehat{A}$ can be given the structure of a topological ring in three steps.
\begin{enumerate}[label=(\roman*)]
\item It is clear that $\widehat{A}$ is an $\widehat{A_0}$-module. But, one can also show $\widehat{A_0} \subseteq \widehat{A}$ is open and $\widehat{A_0}$ acts by continuous module operations on $\widehat{A}$.
\item There is a natural $A$-module structure on $\widehat{A}$. For $f \in A$, the continuity of the multiplication by $f$ on $A$ makes $f : A/I^{n+r} \rightarrow A/I^{r+1}$ well-defined for some $n \geq 1$ depending on $f$, uniform in $r$. A module structure is thus induced on the projective limit over $r$.
\item Since $A$ is dense in $\widehat{A}$ and $\widehat{A_0}$ is open, there is an additive decomposition $\widehat{A} = A + \widehat{A_0}$. One defines the structure of a ring on $\widehat{A}$ by using the previous module structures and then forcing the distributive law to hold.
\end{enumerate}
The details are outlined as Exercise \ref{exercise:completion}.

Since $\widehat{A_0}$ is open in $\widehat{A}$ and the topology on $\widehat{A_0}$ is the $I\widehat{A_0}$-adic topology, we conclude that $\widehat{A}$ is a Huber ring. The continuous morphism $A \rightarrow \widehat{A}$ is initial for maps $A \rightarrow B$ with $B$ complete. So, $\widehat{A}$ is independent of the initial choice of pair of definition $(A_0,I)$.

 If $(A,A^+)$ is a Huber pair, we can go back to the start and assume that $A_0 \subseteq A^+$. Then, we form the completion
\begin{equation*}
\widehat{A^+} = \varprojlim_{n} A^+/I^n.
\end{equation*}
This defines an open subring of $\widehat{A}$. Unlike the constructions in Sections \ref{subsubsec:rational-localization-huber-pairs}-\ref{subsubsec:tensor-product}, the ring $\widehat{A^+}$ is already integrally closed and so we get a Huber pair $(\widehat{A},\widehat{A^+})$. In addition one can confirm directly that  $\widehat{(A^{\circ})} = (\widehat A)^{\circ}$, so the property ``$A^+ = A^{\circ}$'' is preserved by completions. See Exercise \ref{exercise:completion-integrally-closed}.

Finally, since $\widehat{A}$ is the completion of $A$, the natural map $\Cont(\widehat{A}) \rightarrow \Cont(A)$ is a bijection. It induces, when $(A,A^+)$ is a Huber pair, a canonical map
\begin{equation}\label{eqn:completion-adic-spectrum}
\Spa(\widehat{A},\widehat{A^+}) \rightarrow \Spa(A,A^+)
\end{equation}
that is also bijective. Huber proves \eqref{eqn:completion-adic-spectrum} is a homeomorphism \cite[Proposition 3.9]{Huber-ContinuousValuations}. The difficulties are similar to those we discussed with rational localization already.

\subsection{Identifying $A^+$}\label{subsec:adic-nullstellensatz}
The remaining goal in this lecture is showing that rings of integral elements are intrinsic to adic spectra, just as radical ideals are intrinsic to closed subsets of affine schemes.

\begin{theorem}[The adic Nullstellensatz]\label{theorem:Aplus-recovery}
Let $A$ be a Huber ring. Suppose that $A^+ \subseteq A$ is open and integrally closed. Then,
\begin{equation}\label{eqn:A+-equality-in-theorem}
A^+ = \{f \in A \mid \val{f(x)} \leq 1 \text{ for all $x \in \Spa(A,A^+)$}\}.
\end{equation}
\end{theorem}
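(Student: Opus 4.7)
The containment $A^+ \subseteq \{f \in A : \val{f(x)} \leq 1 \text{ for all } x \in \Spa(A,A^+)\}$ is immediate from the definition of $\Spa(A,A^+)$. For the reverse, I fix $f \in A$ with $\val{f(x)} \leq 1$ for every $x \in \Spa(A,A^+)$ and argue by contradiction, assuming $f \notin A^+$ and producing an $x \in \Spa(A,A^+)$ with $\val{f(x)} > 1$.

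The first move is a Rabinowitsch-style step: form the rational localization $B = A(\frac{1}{f})$, valid because $\{1,f\}$ generates the open unit ideal of $A$, and set $B^+$ equal to the integral closure of $A^+[1/f]$ in $B$, making $(B,B^+)$ a Huber pair by Section \ref{subsubsec:rational-localization-huber-pairs}. The natural map $A \to B$ is continuous and sends $A^+$ into $B^+$, inducing a continuous map $\Spa(B,B^+) \to \Spa(A,A^+)$; any $y \in \Spa(B,B^+)$ pulls back to an $x$ with $\val{f(x)} = \val{f(y)} \geq 1$, since $1/f \in B^+$. The assumption $f \notin A^+$ forces $1/f$ to be a non-unit in $B^+$: if $1/f \in (B^+)^\times$, then $f = (1/f)^{-1} \in B^+$, and multiplying through by a power of $f$ to clear denominators in any integrality relation of $f$ over $A^+[1/f]$ yields an integrality relation of $f$ over $A^+$, so $f \in A^+$ by integral closedness.

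Next, I rule out $\Spa(B,B^+) = \emptyset$. By Huber's emptiness criterion referenced on page \pageref{pageref:emptiness}, emptiness would force $\{0\}$ to be dense in $B$; unwinding the topology (with $B_0 = A_0[1/f]$ a ring of definition, $IB_0$ an ideal of definition, and $A_0 \subseteq A^+$), one finds some $k$ with $f^k \in I \subseteq A^+$, whence $f \in A^+$ by integral closedness, contradicting the assumption. Hence $\Spa(B,B^+) \neq \emptyset$, and the remaining task is to produce $y \in \Spa(B,B^+)$ with $\val{(1/f)(y)} < 1$ strictly, since its pullback then gives $\val{f(x)} > 1$.

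To construct such a $y$, I would pick a maximal ideal $\mathfrak{m} \subset B^+$ containing $1/f$ (possible since $1/f$ is a non-unit), pick a prime $\mathfrak{q}_0$ of $B$ with $\mathfrak{q}_0 \cap B^+ \subseteq \mathfrak{m}$, and dominate the local ring $(B^+/(\mathfrak{q}_0 \cap B^+))_\mathfrak{m}$ by a valuation ring $V \subseteq \Frac(B/\mathfrak{q}_0)$. The resulting valuation $y$ on $B$ has support $\mathfrak{q}_0$, satisfies $\val{g(y)} \leq 1$ for $g \in B^+$, and strictly satisfies $\val{(1/f)(y)} < 1$. The main obstacle is verifying continuity of $y$: by Proposition \ref{proposition:contiunity-criteria}, membership in $\Cont(B)$ requires the generators of an ideal of definition of $B$ to have values co-final in $\Gamma_y$, which is not automatic from the purely algebraic construction. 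Here horizontal specialization enters — replacing $y$ by its specialization along a suitable convex subgroup of $\Gamma_y$, which enlarges the support to absorb any elements obstructing co-finality while preserving both the bound $\val{-} \leq 1$ on $B^+$ and the strict inequality $\val{1/f} < 1$. The specialized $y$ lies in $\Spa(B,B^+)$, and pulling back to $\Spa(A,A^+)$ produces the required $x$ with $\val{f(x)} > 1$, completing the contradiction.
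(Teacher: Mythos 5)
Your strategy is the paper's: a Rabinowitsch-style localization to $B=A(\frac{1}{f})$ with $B^+$ the integral closure of $A^+[\frac{1}{f}]$, the observation that $\frac{1}{f}$ is a non-unit of $B^+$, an algebraic domination argument producing a valuation $y$ with $\val{g(y)}\leq 1$ on $B^+$ and $\val{(1/f)(y)}<1$, and a horizontal specialization to repair continuity. Dominating $(B^+/(\frakq_0\cap B^+))_{\frakm}$ directly inside $\Frac(B/\frakq_0)$ is a harmless reorganization of the paper's route (which dominates inside $\Frac(B^+/\frakq)$ for a carefully chosen minimal $\frakq$ and then extends by Chevalley); such a $\frakq_0$ exists because $(B^+\setminus\frakm)^{-1}B\neq 0$. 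However, the detour through Huber's emptiness criterion should be cut: it buys you nothing (as you note, you still must exhibit a point where $\val{(1/f)(y)}<1$ \emph{strictly}, and your domination argument already supplies existence), it invokes a result these notes cite but do not prove, and that result is stated for rings of integral elements, i.e.\ it uses $A^+\subseteq A^{\circ}$ — a hypothesis Theorem \ref{theorem:Aplus-recovery} deliberately omits.

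The genuine gap is the final step. ``Replacing $y$ by its specialization along a suitable convex subgroup \dots\ which enlarges the support to absorb any elements obstructing co-finality'' describes the desired outcome rather than proving it. Two things are missing. First, the subgroup must be named: the paper takes $\overline{\Gamma}_1$, the convex closure of the subgroup $\Gamma_1$ generated by all values $\val{s(y)}\geq 1$; this contains $\val{f(y)}$, so the strict inequality survives, and horizontal specialization only ever decreases values, so the bound on $B^+$ survives. Second, and crucially, one must \emph{prove} that for every topologically nilpotent $h$ the value $\val{h(y)}$ is co-final in $\overline{\Gamma}_1$ after specialization — restricting to a convex subgroup does not make this automatic. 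This is exactly where the openness of $A^+$ enters: for $h\in B^{\circ\circ}$ and $s$ with $\val{s(y)}\neq 0$, openness gives $h^n s f\in B^+$ for $n\gg 0$, hence $\val{h(y)}^n<\val{s(y)}^{-1}$ (estimate \eqref{eqn:Aplus-proof-almost-co-final}), and every element of $\overline{\Gamma}_1$ is bounded below by some such $\val{s(y)}^{-1}$. Without this estimate the specialized valuation could fail the continuity criterion of Proposition \ref{proposition:contiunity-criteria}, so as written your proposal asserts the conclusion of the hardest step instead of establishing it.
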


By definition, $A^+$ is contained in the right-hand side of \eqref{eqn:A+-equality-in-theorem}. To prove the theorem, we need to prove that if $f \in A$ but $f \not\in A^+$, then there exists a {\em continuous} valuation $x$ on $A$ such that $\val{f(x)} > 1$ and $\val{g(x)}\leq 1$ for all $g \in A^+$. 

The argument occurs in three steps. In the first step, we reduce to the case where $f$ is a unit in $A$. This is where we use that $A^+$ is integrally closed. In the second step, we construct a candidate valuation $x_0$, without imposing a continuity condition. The construction is pure algebra, including a brutal extension of a valuation from one field to a larger field. The extension is so uncontrolled that arguing directly for continuity is hopeless. Therefore, in the third step, we replace $x_0$ by a continuous valuation $x$, while preserving the bounds imposed on $f$ and $g \in A^+$. This is where the openness of $A^+$ is used. The replacement step is based on a process called \term{horizontal specialization}. Properties of horizontal specialization will be given as exercises, but note that it is a fundamental technique in Huber's papers. Seeing the proof of Theorem \ref{theorem:Aplus-recovery} may inspire the reader to study original sources more carefully.

The proof we give of Theorem \ref{theorem:Aplus-recovery} is essentially the same as in \cite[Lemma 3.3(i)]{Huber-ContinuousValuations} and \cite[Theorem 10.3.6]{Conrad-PerfectoidSeminarNotes}. The main difference is that, in the third step, we argue for continuity directly from Proposition \ref{proposition:contiunity-criteria}, whereas other proofs refer to a result \cite[Theorem 3.1]{Huber-ContinuousValuations} that recognizes $\Cont(A)$ within $\Spv(A)$.

For Sections \ref{subsubsec:reduction-Aplus-theorem}, \ref{subsubsec:algebra}, and \ref{subsubsec:specialization}, we reserve $A$ for a fixed Huber ring, $A^+$ for an open and integrally closed subring, and $f$ an element of $A$ such that $f \not\in A^+$.

\subsection{Nullstellensatz:\ The reduction step}\label{subsubsec:reduction-Aplus-theorem}

We seek $x \in X = \Spa(A,A^+)$ with $\val{f(x)} > 1$. In principle, we can limit our search to the open subset $U(\frac{1}{f}) = \{x \in X \mid 1 \leq \val{f(x)}\}$. In terms of rings, we focus on $B = A(\frac{1}{f})$ and $B^+$, which we define to be the integral closure of $A^+[\frac{1}{f}]$ in $A(\frac{1}{f})$.  Note $B$ is a Huber ring, since $\{1,f\}$ generate the unit ideal in $A$. By the argument in Section \ref{subsubsec:rational-localization-huber-pairs}, $B^+\subseteq B$ is open and integrally closed. 

We claim that $f\not\in B^+$. Indeed, if $f \in B^+$, then $f$ is integral over $A^+[\frac{1}{f}]$. Clearing denominators in $A[\frac{1}{f}]$, we find a polynomial relation $f^m + gf^{m-1} + \dotsb = 0$ in $A$, with coefficients $g \in A^+$. This is impossible because $A^+$ is integrally closed and $f \not\in A^+$. So, $f \not\in B^+$. 

Finally, if $x \in \Spa(B,B^+)$ and $\val{f(x)} > 1$, then its image in $\Spa(A,A^+)$ satisfies the same inequality. Replacing $(A,A^+)$ with $(B,B^+)$, we will now assume that
\begin{enumerate}[label=(\roman*)]
\item $f$ is a unit in $A$, and\label{enum-part:Aplus-theorem-reduction-unit}
\item $\frac{1}{f} \in A^+$ but $f \not\in A^+$.\label{enum-part:Aplus-theorem-Aplus-maintain}
\end{enumerate}

\subsection{Nullstellensatz:\ The algebraic argument}\label{subsubsec:algebra}

This part of the argument is pure algebra. We assume \ref{enum-part:Aplus-theorem-reduction-unit} and \ref{enum-part:Aplus-theorem-Aplus-maintain}. We will construct $x_0 \in \Spv(A)$ such that $\val{g(x_0)}\leq 1$ for all $g\in A^+$ while $\val{f(x_0)} > 1$.

By \ref{enum-part:Aplus-theorem-Aplus-maintain}, the element $\frac{1}{f} \in A^+$ is not a unit. Choose a prime $\frakp \subseteq A^+$ with $\frac{1}{f} \in \frakp$. By \ref{enum-part:Aplus-theorem-reduction-unit}, $f$ {\em is} a unit  in $A$. Therefore $\frac{1}{f}$ is definitely not nilpotent in $A^+_{\frakp}$. So, choose a minimal prime $\frakq$ in $A^+$ such that $\frakq \subseteq \frakp$ and $\frac{1}{f} \not\in \frakq$. We consider then the localizations $A^+_{\frakq} \subseteq A_{\frakq}$. A prime ideal of (the non-zero ring) $A_{\frakq}$ contracts to a prime $\frakQ$ of $A$ such that $\frakQ \cap A^+ \subseteq \frakq$. Equality holds since $\frakq$ is minimal among primes in $A^+$. We now have a ring extension $A^+/\frakq \subseteq A/\frakQ$ that gives rise to a field extension
\begin{equation}\label{eqn:Kplus-K-extension}
K^+ = \Frac(A^+/\frakq) \subseteq \Frac(A/\frakQ) = K.
\end{equation}

We now reference commutative algebra and valuation theory. Focusing first just on $K^+$, \cite[Theorem 10.2]{Matsumura-CommutativeRingTheory} implies that we may construct a valuation subring $R^+ \subseteq K^+$ such that 
\begin{equation*}
A^+/\frakq \subseteq R^+ \subseteq K^+ \;\; \text{and}\;\; \frakm_{R^+} \cap A^+/\frakq = \frakp/\frakq.
\end{equation*}
As explained in Section \ref{subsec:support-fibers} and Exercises \ref{exercise:valuation-to-valuation-ring}-\ref{exercise:valuation-valuation-ring-bijection}, there is a unique $x^+ \in \Spv(K^+)$ with $A_{x^+} = R^+$. We view $x^+ \in \Spv(K^+) \cong \supp^{-1}(\frakq) \subseteq \Spv(A^+)$. We then observe:
\begin{enumerate}[label=(\alph*)]
\item Since $A^+/\frakq \subseteq A_x^+$, we have $\val{g(x^+)} \leq 1$ for all $g \in A^+$.\label{enum-part:gAplus-bound}
\item Since $\frac{1}{f} \not\in \frakq$, we have $0 < \val{\frac{1}{f}(x^+)}$. Yet, $\frac{1}{f} \bmod \frakq \in  \frakm_{R^+}$ and so $\val{\frac{1}{f}(x^+)} < 1$.\label{enum-part:fAplus-bound}
\end{enumerate}
Bringing $K$ into the discussion, Chevalley's theorem \cite[Chapter VI, \S 3, no. 3, Proposition 5]{Bourbaki-CommutativeAlgebra} says the inclusion $K^+ \subseteq K$ induces a {\em surjection} $\Spv(K) \twoheadrightarrow \Spv(K^+)$. Therefore, we choose 
\begin{equation*}
x_0 \in \Spv(K) \cong \supp^{-1}(\mathfrak Q) \subseteq \Spv(A)
\end{equation*}
that lifts $x^+$. If $g \in A^+$, then $\val{g(x_0)} = \val{g(x^+)} \leq 1$ by \ref{enum-part:gAplus-bound}. Since $f \in A^\times$, we have $\val{f(x_0)} > 1$ by \ref{enum-part:fAplus-bound}. This completes the construction of $x_0$.

\subsection{Nullstellensatz:\ The specialization maneuver}\label{subsubsec:specialization}
So far, we have $x_0 \in \Spv(A)$ such that $\val{f(x_0)} > 1$, while $\val{g(x_0)} \leq 1$ for all $g \in A^+$. Now we replace $x_0 \in \Spv(A)$ by $x \in \Cont(A)$ without altering the constraints. The rest of the argument relies on  $A^+$ being open in $A$.

As preparation, we examine how close $x_0$ is to being continuous. Let $\Gamma_{0}$ be the value group of $x_0$. By Proposition \ref{proposition:contiunity-criteria}, the continuity of $x_0$ depends on whether or not $\val{h(x_0)}$ is co-final in $\Gamma_0$ for $h \in A^{\circ\circ}$. Consider $s \in A$ with $\val{s(x_0)} \neq 0$ and $h \in A^{\circ\circ}$. Since $A^+$ is open in $A$ and $h$ is topologically nilpotent we have $h^n sf \in A^+$ as $n\rightarrow \infty$. So, $\val{h^nsf(x_0)}\leq 1$ as $n\rightarrow \infty$. Since $\val{s(x_0)}\neq 0$ and $\val{f(x_0)}>1$ we see 
\begin{equation}\label{eqn:Aplus-proof-almost-co-final}
\val{h(x_0)}^n \leq \frac{1}{\val{s(x_0)}\val{f(x_0)}} < \frac{1}{\val{s(x_0)}} \;\;\;\;\; (n \gg 0).
\end{equation}
Strictly speaking, this {\em does not} show $\val{h(x_0)}$ is co-final in $\Gamma_0$, but it is close and we will end up using the estimate \eqref{eqn:Aplus-proof-almost-co-final}. We now adjust $x_0$ in three steps.

\begin{enumerate}[label=(\Roman*)]
\item Let $\Gamma_1 \subseteq \Gamma_0$ be the subgroup generated by all $\val{s(x_0)} \geq 1$ for $s \in A$. The general element of $\Gamma_1$ is
\begin{equation*}
\frac{\val{t(x_0)}}{\val{s(x_0)}}
\end{equation*}
where that $s,t \in A$ and $\val{s(x_0)},\val{t(x_0)} \geq 1$.\label{enum-part:Gamma1}
\item Let $\overline{\Gamma}_1 \subseteq \Gamma_0$ be the convex closure of $\Gamma_1$. This is the subgroup of elements in $\Gamma_0$ that lie between two elements of $\Gamma_1$. See Exercise \ref{exercise:convex-closures}. If $\delta \in \overline{\Gamma}_1$, then \ref{enum-part:Gamma1}  implies there exists $s,t \in A$ with $\val{s(x_0)}$, $\val{t(x_0)} \geq 1$ such that
\begin{equation}\label{eqn:h2-versus-gamma}
\frac{1}{\val{s(x_0)}} \leq  \frac{\val{t(x_0)}}{\val{s(x_0)}} \leq \delta.
\end{equation}
\label{enum-part:Gamma1-closure}
\item We now define $x \in \Spv(A)$. For $s \in A$, set
\begin{equation}\label{eqn:horizontal-specialization-definition}
\val{s(x)} = \begin{cases}
\val{s(x_0)} & \text{if $\val{s(x_0)} \in \overline{\Gamma}_1$;}\\
0 & \text{otherwise}.
\end{cases}
\end{equation}
We leave as  Exercise \ref{exercise:horizontal-specializations} that this is a valuation on $A$. In fact, it is the most extreme case of a process called \term{horizontal specialization}. The same formula defines a valuation if $\overline{\Gamma}_1$ is replaced by any convex subgroup $\Delta \subseteq \Gamma_0$ containing $\Gamma_1$. 
\end{enumerate}

We now argue that $\val{g(x)}\leq 1$ for $g \in A^+$ and  $\val{f(x)} > 1$  and that $x$ is continuous.

\begin{enumerate}[label=\arabic*.]
\item In \eqref{eqn:horizontal-specialization-definition}, we see $\val{s(x)}\leq \val{s(x_0)}$ for all $s \in A$. Given $\val{g(x_0)}\leq 1$ for $g \in A^+$, we therefore also have $\val{g(x)}\leq 1$ for $g \in A^+$.
\item On the other hand, $\val{f(x_0)} \in \Gamma_1 \subseteq \overline{\Gamma}_1$. So, $\val{f(x)} = \val{f(x_0)} > 1$.
\item Finally, suppose $h \in A^{\circ\circ}$ and $\delta \in \overline{\Gamma}_1$ is arbitrary. By \eqref{eqn:Aplus-proof-almost-co-final} and \eqref{eqn:h2-versus-gamma} we may choose $s \in A$ such that $\val{s(x_0)} \neq 0$ and
\begin{equation*}
\val{h(x)}^n \leq \val{h(x_0)}^n < \frac{1}{\val{s(x_0)}} \leq \delta \;\;\;\;\; (n \gg 0).
\end{equation*}
So, $\val{h(x)}$ is co-final in $\overline{\Gamma}_1$, and  $x$ is continuous by Proposition \ref{proposition:contiunity-criteria}.
\end{enumerate}

\subsection*{Section \thesection~ Exercises}

\begin{exercise}\label{exercise:top-nil-always-in-A+}
Let $A$ be a Huber ring.
\begin{enumerate}[label=(\alph*)]
\item Show that if $x \in \Cont(A)$ and $f \in A^{\circ\circ}$, then $\val{f(x)} < 1$.\label{enum-part:top-nil-Acirccirc}
\item Show that if $A^+ $ is a ring of integral elements, then $A^{\circ\circ} \subseteq A^+$.\label{enum-part:Acirccirc-integral-elements}
\end{enumerate}
\end{exercise}
\begin{solution}
\begin{enumerate}[label=(\alph*)]
\item  This follows from the continuity criterion Proposition \ref{proposition:contiunity-criteria}. 
\item Start with $A^+$ being open. If $f \in A^{\circ\circ}$, then $f^n \in A^+$ for $n \gg 0$. Therefore $f$ is integral over $A^+$. Since $A^+$ is integrally closed, we conclude $f \in A^+$.
\end{enumerate}
\end{solution}

\begin{exercise}\label{exercise:pass-to-integral-closure}
Let $A$ be a Huber ring and $\Sigma \subseteq A$ any subset. Define 
\begin{equation*}
\Spa(A,\Sigma) = \{x \in \Cont(A) \mid \val{f(x)} \leq 1 \text{ for all $f \in \Sigma$}\}.
\end{equation*}
Let $A^+$ be the integral closure in $A$ of the subring generated by $\Sigma$ and $A^{\circ\circ}$.
\begin{enumerate}[label=(\alph*)]
\item Show that $A^+$ is open and integrally closed.
\item Show that $\Spa(A,\Sigma) = \Spa(A,A^+)$.\label{exercise-part:pass-to-integral-closure-no-Spa-change}
\end{enumerate}
\end{exercise}
\begin{solution}
\begin{enumerate}[label=(\alph*)]
\item It is clear that $A^+$ is integrally closed. It is open because it contains $A^{\circ\circ}$, which is itself open.
\item Since $\Sigma \subseteq A^+$ we have a containment from left to right. Suppose that $x \in \Spa(A,\Sigma)$. Since $x$ is continuous, $\val{f(x)} < 1$ for $f \in A^{\circ\circ}$ by part \ref{enum-part:top-nil-Acirccirc} of Exercise \ref{exercise:top-nil-always-in-A+}. Therefore $\val{f(x)} \leq 1$ on the subring $B$ generated by $\Sigma$ and $A^{\circ\circ}$. Since $A^+$ is the integral closure of $B$, we need to show that if $f$ is integral over $B$ then $\val{f(x)}\leq 1$. So suppose $f^n + b_{n-1}f^{n-1} + \dotsb + b_0 = 0$ with $b_j \in B$ at least one of which is non-zero. (If they are all zero then $f \in A^{\circ\circ}$ already.) Then
\begin{equation*}
\val{f(x)}^n \leq \max\{ \val{b_j(x)}\val{f(x)}^j \}.
\end{equation*}
Since $\val{b_j(x)} \leq 1$ already we see $\val{f(x)}^n \leq \val{f(x)}^j$ for some $0\leq j < n$. Thus $\val{f(x)}^{n-j}\leq 1$ and from that it follows $\val{f(x)}\leq 1$ as well.
\end{enumerate}
\end{solution}

\begin{exercise}\label{exercise:unit-disc-closure}
Show that
\begin{equation*}
\Spa(\Cp\lr w,\OCp\lr w) = \{x \in \Cont(\Cp\lr w) \mid \val{w(x)} \leq 1\}.
\end{equation*}
\end{exercise}

\begin{exercise}\label{exercise:ring-of-definitions-contained-in-any-open}
Let $A$ be a Huber ring and $B \subseteq A$ an open subring. Show that there exists a ring of definition $A_0$ of $A$ that is contained in $B$.
\end{exercise}
\begin{solution}
Suppose $A_0'$ is any ring of definition. Let $A_0 = A_0' \cap B$. Then $A_0$ is open since $B$ is open and $A_0$ is bounded since it is contained in the bounded subring $A_0'$. (See Exercise \ref{exercise:bounded-exercises}.)
\end{solution}

\begin{exercise}\label{exercise:localization-must-integrally-close}
Let $(A,A^+)$ be a Huber pair and $(B,B^+)$ its rational localization with respect to $g_1,\dotsc,g_r,s$. Show by example that $A^+[\frac{g_1}{s},\dotsc,\frac{g_d}{s}]\neq B^+$, possibly.
\end{exercise}
\begin{solution}
Why would you expect equality? Suppose $A = \Qp[w]$ with the $p$-adic topology and we will localize with respect to $\{g_1,g_2\} = \{p^2,w^2\}$ and $s = \{p^2\}$. Choose $A^+ = A_0 = \Zp[w]$. Then, $B=\Qp[w]$ and
\begin{equation*}
A^+[\frac{g_1}{s},\frac{g_2}{s}] = \Zp[(\frac{w}{p})^2] \subseteq B
\end{equation*}
is not integrally closed. The integral closure is $B^+ = \Zp[w/p]$.
\end{solution}

\begin{exercise}\label{exercise:rational-domain-factorization}
Let $A$ be a Huber ring and assume $g_1,\dotsc,g_r,s \in A$ generate an open $A$-ideal. Let $B = A(\frac{g_1,\dotsc,g_r}{s})$. Given a ring of integral elements $A^+$, define the corresponding Huber pair $(B,B^+)$ as in Section \ref{subsubsec:rational-localization-huber-pairs}.
\begin{enumerate}[label=(\alph*)]
\item Show that the natural map $\Spa(B,B^+) \rightarrow \Spa(A,A^+)$ factors through the rational subset $U(\frac{g_1,\dotsc,g_r}{s})\subseteq \Spa(A,A^+)$.
\item Show that the natural map $\Spa(B,B^+) \rightarrow U(\frac{g_1,\dotsc,g_r}{s})$ is a bijection.
\item Show that the preimage of a rational subset in $\Spa(A,A^+)$ is a rational subset in $\Spa(B,B^+)$.
\end{enumerate}
\end{exercise}
\begin{solution}
\begin{enumerate}[label=(\alph*)]
\item Let $\iota: A \rightarrow B$ be the localization map. Given $x \in \Spa(B,B^+)$ we define $y \in \Spa(A,A^+)$ by $\val{f(y)} = \val{\iota(f)(x)}$. Recall that $s \in B^\times$. So $\val{s(y)} = \val{\iota(s)(x)} \neq 0$. Second, by construction $\frac{\iota(g_j)}{\iota(s)} \in B^+$ for each $j=1,\dotsc,r$. Therefore if $x \in \Spa(B,B^+)$, then $\val{\frac{\iota(g_j)}{\iota(s)}(x)} \leq 1$. Multiplying through we get $\val{\iota(g_j)(x)} \leq \val{\iota(s)(x)} \neq 0$.  This shows that $y \in U(\frac{g_1}{s},\dotsc,\frac{g_r}{s})$.
\item It is clear that $x \mapsto y$ is injective, since valuations are multiplicative. So, we have to show that $\Spa(B,B^+) \rightarrow  U(\frac{g_1}{s},\dotsc,\frac{g_r}{s})$ is onto. Suppose that $y \in U(\frac{g_1}{s},\dotsc,\frac{g_r}{s})$, so $\val{s(y)} \neq 0$. Then $B = A[\frac{1}{s}]$ and we define $x$ as a valuation on $B$ via
\begin{equation*}
\val{\frac{\iota(f)}{\iota(s^n)}(x)} = \frac{\val{f(y)}}{\val{s(y)}^n}.
\end{equation*}
It is not hard to check that this defines a valuation $x$ on $B$. We have $\val{F(x)}\leq 1$ for $F \in A^+[\frac{g_1}{s},\dotsc,\frac{g_r}{s}]$, since $\val{f(y)} \leq 1$ for $f \in A^+$ and $\val{g_j(y)}\leq \val{s(y)}$, since $y \in U(\frac{g_1}{s},\dotsc,\frac{g_r}{s})$. By Exercise \ref{exercise:pass-to-integral-closure}\ref{exercise-part:pass-to-integral-closure-no-Spa-change}, this shows $\val{F(x)}\leq 1$ for $F \in B^+$.
\item For this part, what we want to use is Exercise \ref{exercise:adic-maps} and \ref{exercise:adic-maps-functoriality}. Indeed, what we learn in those exercises is that $A \rightarrow B$ is an adic morphism of Huber rings, and that this means the preimage of a rational subset is a rational subset.
\end{enumerate}
\end{solution}

\begin{exercise}\label{exercise:rational-domain-factorization-false}
Let $A$ be a Huber ring and assume that $g_1,\dotsc,g_r,s \in A$ generate an open $A$-ideal. Let $B=A(\frac{g_1,\dotsc,g_r}{s})$. Define
\begin{equation*}
U = U(\frac{g_1,\dotsc,g_r}{s}) = \{x \in \Cont(A) \mid \val{g_j(x)}\leq \val{s(x)} \neq 0 \text{ for all $j$}\}.
\end{equation*}
Show that the natural map $\Cont(B) \rightarrow \Cont(A)$ does not always factor through $U$.
\end{exercise}

\begin{solution}
Consider $A = \Cp\lr w$. Choose $g_1 = w$ and $s = 1$. So, $B = \Cp\lr w(\frac{w}{1})$. It is clear that $U = D$ is the adic unit disc defined by $x \in \Cont(A)$ such that $\val{w(x)}\leq 1$. But what is $B$? Well, $B = A$ as rings since $B = A[\frac{1}{s}] = A[\frac{1}{1}] = A$. Second, the topology on $B$ is the one where the open sets are the $p^n\OCp\lr w[w] = p^n\OCp\lr w$. Thus, it is the same topology, i.e.\ $A \cong B$ as Huber rings. The map $\Cont(B)\rightarrow \Cont(A)$ is the identity map. Since $x_{1^+} \in \Cont(A) \backslash U$, we conclude the image does not factor through $U$.
\end{solution}

\begin{exercise}\label{exercise:minimal-modulus-principal}
Let $A$ be a Huber ring  and $I$ an ideal of definition. Assume $U$ is quasi-compact in $\Spa(A,A^+)$ and $s \in A$ such that $\val{s(x)}\neq 0$ on $U$.
\begin{enumerate}[label=(\alph*)]
\item Show that if $x \in U$, then there exists $n$ such that $\val{f(x)} \leq \val{s(x)}$ for all $f \in I^n$.
\item Suppose that $g_1,\dotsc,g_r$ is any list of elements of $A$. Show that there exists elements $f_1,\dotsc,f_d \in I$ such that the ideal generated by $g_1,\dotsc,g_r,f_1,\dotsc,f_d$ is open in $A$ and 
\begin{equation*}
U(\frac{g_1,\dotsc,g_r}{s}) =U(\frac{g_1,\dotsc,g_r,f_1,\dotsc,f_d}{s}).
\end{equation*}
\item Show that the map in part (a) of Exercise \ref{exercise:rational-domain-factorization} maps rational subsets to rational subsets.
\end{enumerate}
\end{exercise}
\begin{solution}
See \cite[Lemma 3.11]{Huber-ContinuousValuations} or \cite[Lemma III.3.3]{Morel-AdicSpaceNotes}.
\end{solution}

\begin{exercise}\label{exercise:tate-ring-tensors}
Let $A$, $B$, and $C$ be Tate rings and assume there are continuous ring morphisms $A \rightarrow B$ and $A \rightarrow C$.
\begin{enumerate}[label=(\alph*)]
\item Show that $R = B\otimes_A C$ with the topology defined in Section \ref{subsubsec:tensor-product} is a topological ring.\label{exercise:tate-ring-tensors-topological}
\item Show that $R$ is a Tate ring.\label{exercise:tate-ring-tensors-still-tate}
\item Verify the universal property of the tensor product $R$ with respect to pairs of continuous map $B \rightarrow S$ and $C \rightarrow S$.
\end{enumerate}
\end{exercise}

\begin{hint}
See Exercise \ref{exercise:subring-Iadic} for part \ref{exercise:tate-ring-tensors-topological}.
\end{hint}

\begin{solution}
\begin{enumerate}[label=(\alph*)]
\item Recall the construction of the topology on $R$ is as follows. Fix $A \xrightarrow{\varphi} B$ and $A \xrightarrow{\psi} C$. We assume we have rings of definition $\varphi(A_0) \subseteq B_0$ and $\psi(A_0) \subseteq C_0$. We fix a pseudo-uniformizer $\varpi \in A_0$, so $\varphi(\varpi) \in B_0$ and $\psi(\varpi) \in C_0$ are also pseudo-uniformizers. The topology on $R$ is given by making $R_0 = B_0 \otimes_{A_0} C_0$ an open subring with, and giving it the $\varpi R_0$-adic topology. 

By Exercise \ref{exercise:subring-Iadic}, to show $R$ is a topological ring, it suffices to show that for all simple tensors $b\otimes c$, we have $(b\otimes c)\varpi^n \in R_0$ for some $n \gg 0$. But this is clear, since we can choose $m \gg 0$ such that $b\varphi(\varpi)^m \in B_0$ and $\ell \gg 0$ such that $c \psi(\varpi)^\ell \in C_0$. Then for $n \geq m+\ell$ we have
\begin{equation*}
(b\otimes c)\varpi^{n} \in R_0.
\end{equation*}
\item This is clear since the topology on $R_0$ is given $\varpi R_0$-adically and $\varpi$ is invertible (in either $B_0$ or $C_0$, it does not matter).
\end{enumerate}
\end{solution}

\begin{exercise}\label{exercise:tensor-power-bounded}
Let $A \rightarrow B$ be a continuous morphism of Tate rings.
\begin{enumerate}[label=(\alph*)]
\item Show that the natural map $A^{\circ} \rightarrow B$ factors through $B^{\circ}$.
\item Suppose in addition that $A \rightarrow C$ is a continuous morphism of Tate rings. Show that the natural map $B^{\circ}\otimes_{A^\circ} C^\circ \rightarrow B \otimes_A C$ factors through $(B \otimes_A C)^{\circ}$.
\end{enumerate}
\end{exercise}

\begin{exercise}\label{exercise:completion}
Let $A$ be a Huber ring and $(A_0,I)$ a pair of definition. Define 
\begin{equation*}
\widehat{A} = \varprojlim_n A/I^n \supseteq \widehat{A_0} = \varprojlim_n A_0/I^n.
\end{equation*}
\begin{enumerate}[label=(\alph*)]
\item Show that $\widehat{A_0} \subseteq \widehat{A}$ is open.\label{exercise-part:completion-open-subgroup}
\item Show that if $\widetilde f \in \widehat{A_0}$, then multiplication by $\widetilde f$ is continuous on $\widehat{A}$.
\item If $f \in A$ and $\widetilde g = (g_j) \in \widehat{A}$, show there exists $n$ such that $(fg_n,fg_{n+1},\dotsc)$ has well-defined image in $\widehat{A}$. Show that $\widetilde g \mapsto f\widetilde g$ is continuous on $\widehat{A}$. \label{exercise-part:completion-A-module-structure}
\item Show that if $\widetilde g \in \widehat{A}$ then $\widetilde g = \widetilde g_0 + f$ for some $\widetilde g_0 \in \widehat{A_0}$ and $f \in A$.\label{exercise-part:completion-decompostion}
\item Given $\widetilde g = \widetilde g_0 + f \in \widehat{A}$ and $\widetilde h = \widetilde h_0 + k \in \widehat{A}$ as in part \ref{exercise-part:completion-decompostion}, show 
\begin{equation*}
\widetilde g \widetilde h = \widetilde g_0\widetilde h_0  + f\widetilde h_0 + \widetilde g_0k + fk
\end{equation*}
is well-defined in $\widehat{A}$, and it makes $\widehat{A}$ a topological ring.
\end{enumerate} 
\end{exercise}
\begin{solution}
\begin{enumerate}[label=(\alph*)]
\item Suppose $\widetilde f = (f_1,f_2,\dotsc) \in \widehat{A}$. Thus $f_n - f_1 \in I \subseteq A_0$ for all $n \geq 2$. Therefore, $\widetilde f \in \widehat{A_0}$ if and only if $f_1 \in A_0$. That is, $\widehat{A_0}$ is the pre-image of $A_0/I \subseteq A/I$ under the natural continuous projection $\widehat A \rightarrow A/I$. That makes $\widehat{A_0}$ open.
\item Since $\widehat{A}$ is a topological group and multiplication by $\widetilde{f}$ is an additive group homomorphism, it is enough to check the multiplication map is continuous near zero. But a neighborhood basis of zero is already contained in $\widehat{A_0}$, since $\widehat{A_0}$ is open by part \ref{exercise-part:completion-open-subgroup}. Thus, the continuity follows from the fact that $\widehat{A_0}$ is already a topological ring.
\item Multiplication by $f$ on $A$ is continuous and so $fI^n \subseteq A_0$ for some $n \geq 1$. Therefore, $fg_n \bmod I \in A/I$ {\em is} well-defined, as is $fg_{n+1}\bmod I^2 \in A/I^2$, and so on. Then,
\begin{equation*}
(fg_n,fg_{n+1},\dotsc) \in \widehat A
\end{equation*}
is well-defined. It is also independent of the choice of $n$ (since it is lands in the projective limit). This shows the first part sentence holds. The proof shows the second part as well. Indeed, it shows that $f I^n\widehat{A_0} \subseteq \widehat{A_0}$. Since the topology on $\widehat{A_0}$ is the $I\widehat{A_0}$-adic topology, this shows $f$ the action of is continuous in a neighborhood of zero, which is all that is needed.
\item 
\end{enumerate}

\end{solution}

\begin{exercise}\label{exercise:completion-integrally-closed}
Let $(A,A^+)$ be a Huber pair.

\begin{enumerate}[label=(\alph*)]
\item Show that the completion $\widehat{A^+}$ is integrally closed in $\widehat{A}$.\label{exercise-part:completion-integrally-closed-Aplus}
\item Let $B = \widehat{A}$. Show that $\widehat{A^{\circ}} = B^{\circ}$.\label{exercise-part:completion-integrally-closed-Acirc}
\end{enumerate}
\end{exercise}
\begin{solution}
Part (a). Explain in \cite[Remark 11.5.2]{Conrad-PerfectoidSeminarNotes}.

Part (b). Claimed in \cite[Remark 11.5.2]{Conrad-PerfectoidSeminarNotes} but I do not know a reference and I have not written an argument down. But Morel to the rescue \cite[Proposition II.3.1.12]{Morel-AdicSpaceNotes}.
\end{solution}

\begin{exercise}\label{exercise:convex-closures}
 Suppose that $\Gamma$ is a totally ordered abelian group. Convex subgroups of $\Gamma$ were defined in Exercise \ref{exercise:convex-subgroups}. This exercise shows that if $\Delta \subseteq \Gamma$, then $\Delta$ is always contained in a smallest convex subgroup $\overline \Delta$ called the \term{convex closure} of $\Delta$ within $\Gamma$. Namely, define
\begin{equation*}
\overline{\Delta} = \{\gamma \in \Gamma \mid \delta_1 \leq \gamma \leq \delta_2 \text{ for some } \delta_1,\delta_2 \in \Delta\}.
\end{equation*}
\begin{enumerate}[label=(\alph*)]
\item Show that $\overline{\Delta}$ is a subgroup of $\Gamma$.
\item Show that $\overline{\Delta}$ is a convex.
\item Show that 
\begin{equation*}
\overline{\Delta} = \bigcap_{\substack{\Gamma \supseteq \Delta_0 \supseteq \Delta\\\text{$\Delta_0$ convex}}} \Delta_0.
\end{equation*}
\end{enumerate}
\end{exercise}

\begin{exercise}\label{exercise:horizontal-specializations}
Let $A$ be a ring and $\bval: A \rightarrow \Gamma \cup \{0\}$ a valuation. Assume that $\Delta \subseteq \Gamma_{\bval}$ is a {\em convex} subgroup that contains $\val{f}$ for all $f \in A$ such that $\val{f} \geq 1$.
\begin{enumerate}[label=(\alph*)]
\item Show that
\begin{equation*}
\val{f}_{\Delta} = \begin{cases}
\val{f} & \text{ if $\val{f} \in \Delta$;}\\
0 & \text{otherwise,}
\end{cases}
\end{equation*}
is a valuation on $A$.
\item Show that $\bval_{\Delta}$ lies in the closure of $\bval$ within $\Spv(A)$.
\end{enumerate}
\end{exercise}
\begin{solution}
\begin{enumerate}[label=(\alph*)]
\item See the proof of \cite[Proposition 4.3.6]{Conrad-PerfectoidSeminarNotes} for the proof $\bval_{\Delta}$ is multiplicative. The key point is showing that if $\val{f},\val{g} \not\in \Delta$, then $\val{fg} \not\in\Delta$.

That source leaves the ultra-metric inequality for the reader. Let us fill that gap. We let $f,g \in A$ and without loss of generality we assume that $\val{f}\leq \val{g}$.
\begin{itemize}
\item Suppose $\val{g} \not\in \Delta$. We claim $\val{f+g} \not\in\Delta$ as well. By assumption $\val{g} < 1$ and so by the ultrametric inequality
\begin{equation*}
\val{f+g} \leq \val{g} < 1.
\end{equation*}
Therefore $\val{f+g} \not\in \Delta$ because $\Delta$ is convex. We conclude $\val{f+g}_{\Delta} = 0$. But also $\val{g}_{\Delta} = 0$ by assumption and $\val{f}\leq \val{g} < 1$ implies $\val{f}_{\Delta} = 0$ as well.
\item Now suppose $\val{g} \in \Delta$. Then $\val{f+g}_{\Delta} \leq \val{f+g} \leq \val{g} = \val{g}_{\Delta}$. Since $\val{f}_{\Delta} \leq \val{f} \leq \val{g}$, this case follows as well.
\end{itemize}
\item We need to show that if $g,s \in A$ then $\bval_{\Delta} \in U(\frac{g}{s})$ implies $\bval \in U(\frac{g}{s})$. That is, we need to show
\begin{equation*}
\val{g}_{\Delta} \leq \val{s}_{\Delta} \neq 0 \implies  \val{g} \leq \val{s} \neq 0.
\end{equation*}
Given $\val{s}_{\Delta} \neq 0$ we necessarily have $\val{s} = \val{s}_{\Delta}$ is non-zero. In particular, $\val{s} \in \Delta$. If $\val{g} \in \Delta$ as well, then the inequalities are the same. Suppose $\val{g} \not\in \Delta$. Note that this implies $\val{g} < 1$. Therefore $\val{s} < \val{g} < 1$ is impossible. We conclude $\val{g} \leq \val{s}$.
\end{enumerate}
\end{solution}

\section{The closed unit disc}\label{section:some-points}
In this lecture, we analyze the closed unit disc 
\begin{equation*}
D = \Spa(\Cp\lr w, \OCp\lr w).
\end{equation*}
The reader looking for a complete analysis can consult \cite[Section 11]{Conrad-PerfectoidSeminarNotes}. Our focus will be more narrow and perhaps prepare the learner for a more detailed treatment. 

First, we broadly discuss how to distinguish points in $D$. We have studied $D$ in Example \ref{example:tate-algebra} and Sections \ref{subsec:exotic-Gamma} and \ref{subsec:valuation-final-example}. We revisit those discussions. Second, we focus on the Gauss point $x_1 \in D$.  We saw in Section \ref{subsec:valuation-final-example} that $x_{1^-}$ lies in the closure, but we will systematically produce many similar points. The main theorem (Theorem \ref{theorem:closure-gauss-point}) exactly describes the closure of $x_1$ within $D$.

\subsection{The Huber ring $\Cp\lr w$}\label{subsection:Cp<w>-reminder}

We begin by reviewing $\Cp\lr w$ as a Huber ring. First, recall $\Cp$ is complete and algebraically closed. Its ring of integers is
\begin{equation*}
\OCp = \{\alpha \in \Cp \mid \norm{\alpha}_p \leq 1\}.
\end{equation*}
The maximal ideal $\frakm_{\OCp}$ consists of those $\alpha$ with $\norm{\alpha}_p < 1$. The residue field is 
\begin{equation*}
\Fpbar \cong \Zpbar/\frakm_{\Zpbar} \cong \OCp/\frakm_{\OCp}.
\end{equation*}
Second, the one-variable Tate algebra $\Cp\lr w$ is a principal ideal domain. Up to scalar, the irreducible elements are the linear polynomials $w-\alpha$ with $\alpha \in \OCp$. Thus,
\begin{equation*}
\Spec(\Cp\lr w) = \{\{0\}\} \cup \{\lr{w-\alpha} \mid \alpha \in \Cp\}.
\end{equation*}
This is a consequence of the \term{Weierstrass preparation theorem}, which says that any $f \in \Cp\lr w$ factors uniquely as $f = p^{\mu}Pu$ where $\mu$ is an integer and $P \in \Cp[w]$ is a monic polynomial and $u \in \OCp\lr w^\times$.

The topology on $\Cp\lr w$ is the one induced by the Gauss norm. Thus $\Cp\lr w$ is a Tate ring with pseudo-uniformizer $p$. The following is a specific instance of the continuity criterion for valuations on $\Cp\lr w$.

\begin{proposition}\label{proposition:continuity-criterion-tate-algebra}
Let $x \in \Spv(\Cp \lr w)$ with value group $\Gamma_x$. The following are equivalent:
\begin{enumerate}[label=(\roman*)]
\item The point $x$ belongs to $D$.\label{enum-part:prop:continuity-criterion-tate-algebra-disc-belong}
\item We have $\val{p(x)}$ is co-final in $\Gamma_x$ and $\val{f(x)} \leq 1$ for all $f \in \OCp\lr w$.\label{enum-part:prop:continuity-criterion-tate-algebra-boundOcp}
\end{enumerate}
\end{proposition}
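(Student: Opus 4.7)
The plan is to derive this directly from the continuity criterion of Proposition \ref{proposition:contiunity-criteria}, exploiting the simple pair of definition $(A_0,I) = (\OCp\lr w, p\OCp\lr w)$ of $A = \Cp\lr w$. Note that $I$ is principal, generated by the single element $f_1 = p$. Moreover $p$ is a unit in $A$, so the multiplicative property of valuations forces $\val{p(x)} \neq 0$ for every $x \in \Spv(A)$. So the ``In particular'' part of Proposition \ref{proposition:contiunity-criteria} applies as soon as $\val{p(x)}$ is co-final in $\Gamma_x$. In that case, it simplifies to the assertion that $x$ is continuous if and only if $\val{f(x)} < \val{p(x)}^{-1}$ for all $f \in \OCp\lr w$.

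For the forward direction \ref{enum-part:prop:continuity-criterion-tate-algebra-disc-belong} $\Rightarrow$ \ref{enum-part:prop:continuity-criterion-tate-algebra-boundOcp}, I would note that $x \in D$ is continuous by definition, and the bound $\val{f(x)} \leq 1$ for $f \in \OCp\lr w$ is exactly the condition defining $D$. Since $p \in \Cp\lr w$ is a pseudo-uniformizer and in particular topologically nilpotent, Proposition \ref{proposition:contiunity-criteria}\ref{enum-part:top-nil-implies-co-final} gives that $\val{p(x)}$ is co-final in $\Gamma_x$.

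For the converse \ref{enum-part:prop:continuity-criterion-tate-algebra-boundOcp} $\Rightarrow$ \ref{enum-part:prop:continuity-criterion-tate-algebra-disc-belong}, the bound half of $x \in D$ is given, so it remains to verify that $x$ is continuous. Here I apply the simplification noted above: co-finality of $\val{p(x)}$ forces $\val{p(x)} < 1$ (see the discussion preceding Proposition \ref{proposition:contiunity-criteria}), so for any $f \in \OCp\lr w$ we get
\begin{equation*}
\val{f(x)} \leq 1 < \val{p(x)}^{-1}.
\end{equation*}
This is exactly the required bound, so Proposition \ref{proposition:contiunity-criteria} confirms $x \in \Cont(\Cp\lr w)$, and therefore $x \in D$.

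There is really no serious obstacle; the proposition is a repackaging of Proposition \ref{proposition:contiunity-criteria} in the special case that the ideal of definition is principal and its generator is a unit in the ambient ring. The only cosmetic subtlety is double-checking that $\val{p(x)} \neq 0$ (automatic since $p \in A^\times$) and that co-finality of $\val{p(x)}$ implies $\val{p(x)} < 1$, both of which are recorded explicitly earlier in the text.
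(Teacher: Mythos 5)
Your proposal is correct and follows essentially the same route as the paper: both directions are read off from Proposition \ref{proposition:contiunity-criteria} using the pair of definition $(\OCp\lr w, p\OCp\lr w)$, with the converse reduced to the observation that co-finality forces $\val{p(x)}<1$ and hence $\val{f(x)}\leq 1 < \val{p(x)}^{-1}$. No gaps.
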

\begin{proof}
By Proposition \ref{proposition:contiunity-criteria}, if $x \in \Cont(\Cp\lr w)$, then $\val{p(x)}$ is co-final in $\Gamma_x$. If $x$ also lies in $D$, then of course $\val{f(x)}\leq 1$ for all $f \in \OCp\lr w$. Therefore \ref{enum-part:prop:continuity-criterion-tate-algebra-disc-belong} implies \ref{enum-part:prop:continuity-criterion-tate-algebra-boundOcp}.

Now suppose $x$ is a valuation and \ref{enum-part:prop:continuity-criterion-tate-algebra-boundOcp} holds. First, since $\val{p(x)}$ is co-final in $\Gamma_x$, we have $\val{p(x)} < 1$. Second, if $f \in \OCp\lr w$, then $\val{f(x)} \leq 1$ by assumption and so
\begin{equation*}
\val{f(x)} \leq 1 < \val{p(x)}^{-1}.
\end{equation*}
Thus $x \in \Cont(\Cp\lr w)$ by Proposition \ref{proposition:contiunity-criteria}. Of course, once $x$ is continuous, it lies in $D$ by assumption. Therefore \ref{enum-part:prop:continuity-criterion-tate-algebra-boundOcp} implies \ref{enum-part:prop:continuity-criterion-tate-algebra-disc-belong}.
\end{proof}

\subsection{Classical and ``disc'' points}\label{subsec:classical-and-disc-points}
Proposition \ref{proposition:continuity-criterion-tate-algebra} simplifies checking whether valuations on $\Cp\lr w$ lie in $D$. Or, at least, it practically reduces a supposition ``$x \in D$'' to simply checking $x$ defines a valuation. Let us tally several points of $D$, called classical and (nested, perhaps) disc points.

\subsubsection{Classical points}
Let $\alpha \in \OCp$. Then there is a point $x_\alpha \in D$ given by
\begin{equation*}
\val{f(x_\alpha)} = \norm{f(\alpha)}_{p}
\end{equation*}
for all $f \in \Cp\lr w$. This defines a valuation since the $p$-adic norm is a valuation. It is the {\em only} $x \in D$ with $\supp(x) = \lr{w-\alpha}$. See Exercise \ref{exercise:fiber-closed-point}.

\subsubsection{Disc points}
Suppose $0 < r \leq 1$ and $\alpha \in \OCp$. We define
\begin{equation*}
D_r(\alpha) = \{\alpha' \in \OCp \mid \norm{\alpha - \alpha'}_p \leq r\}.
\end{equation*}
This is the closed disc of radius $r$ centered at $\alpha$. If $f \in \Cp\lr w$, it has a series expansion
\begin{equation*}
f = b_0 + b_1(w-\alpha) + b_2(w-\alpha)^2 + \dotsb 
\end{equation*}
with $\displaystyle \lim_{i\rightarrow \infty} b_i = 0$. We define $x_{\alpha,r} \in D$ according to
\begin{equation*}
\val{f(x_{\alpha,r})} = \max_{i\geq 0} \norm{b_i}_p r^i \in \R_{\geq 0}.
\end{equation*}
Given $x_{\alpha,r}$ is a valuation, it lies in $D$ by Proposition \ref{proposition:continuity-criterion-tate-algebra}. Indeed $\val{p(x_{\alpha,r})} = \frac{1}{p}$ is co-final in $\R_{>0}$ and if $b_i \in \OCp$ for all $i$, then $\val{f(x_{\alpha,r})} \leq 1$, clearly. We leave as Exercise \ref{exercise:disc-valuation} to check that $x_{\alpha,r}$ is a valuation and
\begin{equation*}
\val{f(x_{\alpha,r})} = \sup_{\alpha' \in D_r(\alpha)} \norm{f(\alpha')}_p.
\end{equation*}
So, $x_{\alpha,r}$ depends only on $D_r(\alpha)$, rather than $\alpha$. We write $x_D = x_{\alpha,r}$ if $D = D_r(\alpha)$.

\subsubsection{Nested discs}
Suppose that $D_{\bullet} : D_0 \supseteq D_1 \supseteq D_2 \supseteq \dotsb$ is a sequence of discs in $\OCp$. Then, we define
\begin{equation*}
\val{f(x_{D_{\bullet}})} = \inf_{i\geq 0} \val{f(x_{D_i})}.
\end{equation*}
Admitting this defines a valuation, it is continuous by Proposition \ref{proposition:continuity-criterion-tate-algebra}. Discussing why this is a valuation is not a priority in this lecture. See \cite[Section 11.3]{Conrad-PerfectoidSeminarNotes}, instead. We at least point out there are three possible behaviors:
\begin{itemize}
\item The intersection $\bigcap_{i \geq 0} D_i$ may be a single point $\alpha \in \OCp$. Then, $x_{D_{\bullet}} = x_\alpha$.
\item The intersection $\bigcap_{i\geq 0} D_i$ may be another disc $D$. Then, $x_{D_{\bullet}} = x_D$.
\item The intersection $\bigcap_{i\geq 0} D_i$ may be empty! This option is available because $\Cp$ is a metric field that is  {\em not} \term{spherically complete}. In that case, $x_{D_{\bullet}}$ is a valuation we have not yet constructed, but on which we will not dwell.
\end{itemize}

\subsection{Review of valuation rings}\label{subsec:valuation-ring-review}
Points in $D$ are classified, in part, by their valuation rings and residue fields. So, we review the  constructions from Section \ref{subsec:support-fibers} and Exercises \ref{exercise:valuation-to-valuation-ring}-\ref{exercise:valuation-valuation-ring-bijection}.

Let $K$ be a field. A subring $A \subseteq K$ is a valuation ring if, given $\alpha \in K^\times$, either $\alpha$ or $\alpha^{-1}$ belong to $A$. If $A$ is a valuation ring, the non-zero principal fractional ideals $\{\alpha A \mid \alpha \in K^\times\}$ are totally ordered by inclusion. In fact, $\alpha \mapsto \alpha A$ defines a bijection
\begin{equation*}
K^\times/A^\times \leftrightarrow \{\alpha A \mid \alpha \in K^\times\}.
\end{equation*}
Therefore, the group $\Gamma_A = K^\times/A^\times$ is naturally a totally ordered abelian group. In terms of cosets, this order is $\alpha A^\times \leq \beta A^\times$ if and only if $\alpha\beta^{-1} \in A$.

The natural function $K \rightarrow \Gamma_A \cup \{0\}$ defines a valuation on $K$. We write $x_A$ for its equivalence class. One of the exercises mentioned is to show $x_A \leftrightarrow A$ defines a bijection between $\Spv(K)$ and the set of valuation subrings of $K$. The inverse is 
\begin{equation*}
x\mapsto A_x = \{\alpha \in K \mid \val{\alpha(x)} \leq 1\}.
\end{equation*}
The ring $A_x$ has a maximal ideal $\frakm_x$. Its residue field is $A_x/\frakm_x$.

\subsection{High-level classification of points in $D$}\label{subsec:D-points-classification}
Points in $D$ are often classified into ``types''  called Types 1-5. See \cite[Example 2.20]{Scholze-Perfectoid} or \cite[p.\ 7-8]{AWS-Perfectoid}. Here, we describe the classification without proof, augmented by listing auxiliary data. For each $x \in D$, we look at its support $\supp(x)$, its value group $\Gamma_x$, and its residue field $A_x/\frakm_x$.

\begin{warning} 
The residue fields $A_x/\frakm_x$ are all characteristic $p$ fields. They are  {\em different} than any kind of residue field gotten by viewing $x \in D$ as a point in a $\Cp$-rigid analytic variety. Those geometric residue fields arise from the structure sheaf over $D$. They are all characteristic zero fields. 
\end{warning}

We will additionally indicate whether $x \in D$ is closed, and then we will finally list the Type. The result is compiled in Table \ref{table:points-in-D}. All but the bottom row of the table has been explained in Section \ref{subsec:classical-and-disc-points}. 

\begin{table}[htp]
\renewcommand{\arraystretch}{1.3}
\setlength{\tabcolsep}{4pt}
\caption{Classification of points in the closed unit disc $D$.}
\begin{center}
\begin{tabular}{|L|L|L|L|L|L|}
\hline
Name & $\supp(x)$ & $\Gamma_x$ & $A_x/\frakm_x$ & Closed? & Type\\
\hline\hline
$x_\alpha$ & $\lr{w-\alpha}$ & $p^{\Q}$ & $\Fpbar$ & Closed & 1 \\
\hline
$x_{\alpha,r}$ ($r \in p^{\Q}$) &  $\{0\}$ & $p^{\Q}$ & $\Fpbar(t)$ & Non-closed & 2\\
\hline
$x_{\alpha,r}$  ($r\not\in p^{\Q}$)  & $\{0\}$ & $p^{\Q}r^{\Z} \subseteq \R^\times$ & $\Fpbar$ & Closed & 3\\
\hline
$x_{D_{\bullet}}$ ($\cap D_i = \emptyset$) & $\{0\}$ & $p^{\Q}$ & $\Fpbar$ & Closed & 4\\
\hline
$x_{\alpha,r}^{\lambda}$ ($r \in p^{\Q}$) & $\{0\}$ & $p^{\Q} \times (\frac{1}{2})^{\Z}$ & $\Fpbar$ & Closed & 5\\
\hline
\end{tabular}
\end{center}
\label{table:points-in-D}
\end{table}%

The classical points $x_\alpha$ are the Type 1 points. The second and third rows list the disc points $x_{\alpha,r}$, but the data is separated according to whether or not $r \in p^{\Q} = \norm{\Cp^\times}_p$. When $r$ lies in $p^{\Q}$, the point $x_{\alpha,r}$ is not closed in $D$. We have already seen this phenomenon at the Gauss point $x_{0,1} = x_1$. In this case, we also see the valuation ring residue field is a transcendental extension of $\Fpbar$. The field generator $t$ is, essentially, the reduction of the coordinate function on the boundary of the rational disc. These phenomena do not occur for disc points of  radius not in the value group of $\Cp$. For those points, instead, the value group is larger than that of $\Cp$. The fourth row shows the data for the nested disc points that are neither true disc points nor classical points. 

The final row indicates a type of point that we have not yet seen. These points are parametrized by $\lambda \in \Ps^1(\Fpbar)$. They comprise the non-trivial points in the closure of $x_{\alpha,r}$ whenever $r \in p^{\Q}$. Their value group is a product group with the lexicographic order. Two examples of such points are $x_{0,1}^{0} = x_{1^-}$ and $x_{0,1}^{\infty} = x_{1^+}$ from Section \ref{subsec:valuation-final-example}, with the caveat that if $(\alpha,r) = (0,1)$, then we do not allow $\lambda = \infty$ in Table \ref{table:points-in-D}, since $x_{1^+} \not\in D$. (See Exercise \ref{exercise:exotic-valuation-explicit} for these calculations.)

The goal of Sections \ref{subsection:valuation-ring-construction} and \ref{subsection:gauss-closure} is to explain the final row, to make precise the construction of $x_{0,1}^{\lambda}$ for $\lambda \in \Aone(\Fpbar)$. We will discover these points while simultaneously showing they form the non-trivial points in the closure of the Gauss point within $D$. Before that, we sketch a cartoon of $D$ that is meant to {\em suggest} the $x_{0,1}^{\lambda}$'s exist.

\subsection{A schematic drawing, focused on the Gauss point}\label{subsection:disc-schematic}

We pause to draw a cartoon. We will sketch what $D$ looks like near the Gauss point $x = x_{0,1}$. Versions of this picture are drawn in other places, for instance \cite[Section 11.3]{Conrad-PerfectoidSeminarNotes} or \cite[Example 2.20]{Scholze-Perfectoid}. We have no intention of giving mathematical meaning to these drawings. That is why we have abstract algebra!

\begin{figure}[htp]
\centering
\includegraphics{./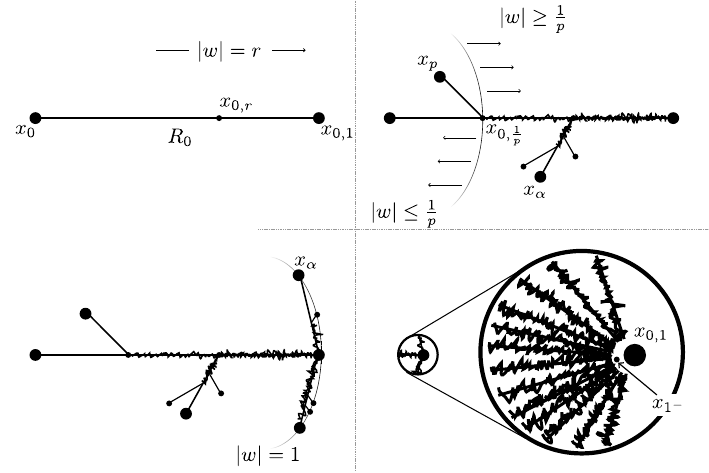}
\caption{Drawing of the closed unit disc $D$ from the perspective of the Gauss point.}
\label{fig:unit-disc}
\end{figure}

There are four steps to create our drawing. They are shown sequentially in Figure \ref{fig:unit-disc} and explained in writing now.

First, we plot the Gauss point $x_{0,1}$ and the classical point $x_0$, which we perceive as the origin of $D$. For now, both points are drawn as simple black dots, even though $x_{0,1}$ is non-closed in $D$. We connect these points with a ray $R_0$. The points of the ray are the disc points $x_{0,r}$ with $0 \leq r \leq 1$. The horizontal scale $r$ measures $\val{w}$ over $D$.

Second, we add the classical point $x_p$. As explained in Section \ref{subsec:classical-and-disc-points}, disc points depend only on physical discs. Since $D_1(0) = D_1(p)$, we see that $x_{0,1} = x_{p,1}$. There is no new Gauss point to consider from $x_p$'s perspective. Analogous to the first step, we draw the ray connecting $x_{p}$ to the Gauss point. The two rays overlap  because
\begin{equation*}
x_{0,r} = x_{p,r} \iff \frac{1}{p} \leq r.
\end{equation*} 
Overlap is drawn as a thicker, fuzzier, line. The process can be repeated for $x_\alpha$ with $\alpha \in \frakm_{\OCp}$. For each $x_\alpha$, its ray to the Gauss point intersects $R_0$ starting at $x_{0,\norm{\alpha}_p}$.

Third, if $\alpha \in \OCp^\times$, then $D_r(\alpha) \neq D_r(0)$ for all $r < 1$. Therefore, the ray from $x_\alpha$ to the Gauss point intersects $R_0$ {\em only} at $x_{0,1}$. The corresponding points $x_\alpha$ somehow lie on the boundary of $D$. Of course, different $x_\alpha$'s can have intersecting rays. Indeed, the fundamental principle we are repeatedly using is that if $\alpha,\alpha' \in \OCp$, then
\begin{align*}
\alpha \equiv \alpha' \bmod \frakm_{\OCp} &\iff D_r(\alpha) = D_r(\alpha') \text{ for some $r < 1$}\\
&\iff R_\alpha \cap R_{\alpha'} \supsetneq \{x_{0,1}\}.
\end{align*}
(Here $R_\alpha$ is the ray connecting $x_\alpha$ to the Gauss point.)

Finally, we zoom in on the Gauss point. We find rays parametrized by 
\begin{equation*}
\OCp/\frakm_{\OCp} \cong \Fpbar = \Aone(\Fpbar).
\end{equation*}
Zooming in with higher and higher magnification will eventually omit {\em any} given classical point and {\em any} given disc point not equal to the Gauss point. But it will never omit $x_{1^-}$, which lies in the closure of the Gauss point by Section \ref{subsec:valuation-final-example}. To turn this picture into mathematics, we are going to explain how to place $x_{1^-}$ in a $\Aone(\Fpbar)$-parametrized set of points, forming the non-trivial points in the closure of the Gauss point.

\subsection{The key valuation ring construction}\label{subsection:valuation-ring-construction}
This section complements the review of valuation rings in Section \ref{subsec:valuation-ring-review}. Let $K$ be a field, and let $A \subseteq K$ be a valuation subring with maximal ideal $\frakm$. To keep notations clear, let $L = A/\frakm$. Denote by $\pi: A \rightarrow L$ the natural quotient map.

Suppose $B \subseteq A$ is also a valuation subring of $K$. Then, $\frakm \subseteq B$. Indeed, if $x \in \frakm$ is non-zero then $x^{-1} \not\in A$ and therefore $x^{-1} \not\in B$. Since $B$ is a valuation ring, we conclude $x \in B$ as claimed. Thus, $\overline B = B/\frakm$ makes sense and is a valuation subring of $L$. We leave it as Exercise \ref{exercise:lifting-valuation-rings} to check the converse, i.e.\ that if $\overline B \subseteq L$ is a valuation subring of $L$ then $\pi^{-1}(B) \subseteq A$ is a valuation subring of $K$. Admitting that, we get inverse bijections
\begin{align}
\{\text{valuation rings $B \subseteq A$}\} &\overset{\simeq}{\longleftrightarrow} \{\text{valuation subrings $\overline B \subseteq L$}\}\label{eqn:valuation-ring-quotient}\\
B &\mapsto B/\frakm\nonumber\\
\pi^{-1}(\overline B) &\mapsfrom \overline B.\nonumber
\end{align}
Suppose we have a pair $B \leftrightarrow \overline B$ under \eqref{eqn:valuation-ring-quotient}. Write $x \in \Spv(K)$ for the valuation corresponding to $A$, write $y \in \Spv(K)$ for the one corresponding to $B$, and $\lambda \in \Spv(L)$ for the one determined by $\overline B$. As Exercise \ref{exercise:lifting-valuation-rings}, check the following statement as well:
\begin{enumerate}[label=(\alph*)]
\item The three value groups are arranged in a natural exact sequence
\begin{equation}\label{eqn:lifting-valuation-ring-ses}
1 \rightarrow \Gamma_{\lambda} \rightarrow \Gamma_y \rightarrow \Gamma_x \rightarrow 1.
\end{equation}
\item Within $\Spv(K)$ we have the specialization relation $y \in \overline{\{x\}}$.
\end{enumerate}

\begin{example}\label{example:valuations-function-field}
We will use \eqref{eqn:valuation-ring-quotient} to construct $y \in \Spv(K)$ from $\lambda \in \Spv(L)$. The relevant $L$ is $L \cong \Fpbar(w)$, so let us describe $\Spv(L)$ in that case.

Let $\lambda \in \Pone(\Fpbar)$. For $f \in \Fpbar(w)$, write $\ord_{\lambda} f$ for the order of vanishing of $f$ at $w=\lambda$.\footnote{Recall, $\ord_{\infty} f = \deg(f)$.} We thus get a valuation $\bval_\lambda: \Fpbar(w) \rightarrow \R_{\geq 0}$ by
\begin{equation*}
\val{f}_{\lambda} = (\frac{1}{2})^{\ord_{\lambda} f}.
\end{equation*}
The value group is $\Gamma_\lambda = (\frac{1}{2})^{\Z}$.  Ostrowski's theorem is that the non-trivial elements of $\Spv(\Fpbar(w))$ are represented by the $\bval_\lambda$'s. The proof is recalled as Exercise \ref{exercise:ostrowskis-theorem}.
\end{example}

\subsection{The closure of the Gauss point}\label{subsection:gauss-closure}

We can now describe the closure of the Gauss point in $D$. We begin with two lemmas.

\begin{lemma}\label{lemma:tate-ring-specialization}
Let $A$ be a Tate ring. For $x,y \in \Cont(A)$, the following are equivalent: 
\begin{enumerate}[label=(\roman*)]
\item The point $y$ lies in the closure $\overline{\{x\}}$.\label{lemma-part:tate-ring-specialization-closure}
\item We have $\supp(x) = \supp(y)$ and $A_y \subseteq A_x$.\label{lemma-part:tate-ring-specialization-supports}
\end{enumerate}
\end{lemma}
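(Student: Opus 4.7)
The plan is to split the equivalence into the two implications, noting at the outset that the Tate hypothesis is only needed for one direction, namely (i) $\Rightarrow$ (ii). The main obstacle will be proving that $\supp(x) = \supp(y)$ under (i); once supports agree, both valuations factor through the common residue field $K = \Frac(A/\supp(x))$ as in diagram \eqref{eqn:valuation-factorization}, and everything reduces to comparing rational subsets against the valuation ring criterion from Exercise \ref{exercise:valuation-to-valuation-ring}\ref{exercise-part:valuation-to-valuation-ring-closure}.

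For (i) $\Rightarrow$ (ii), I would first recall that $\supp(x) \subseteq \supp(y)$ holds in general by Exercise \ref{exercise:supports-closed}\ref{exercise-part:supports-closed-specialization}. The reverse inclusion is where Tate is essential. Fix a pseudo-uniformizer $\varpi \in A$ and let $f \in \supp(y)$. Since $\varpi$ is a unit and $x, y \in \Cont(A)$, both $\val{\varpi(x)}$ and $\val{\varpi(y)}$ are non-zero. For each $n \geq 1$, the rational subset $U_n := U(\frac{f}{\varpi^n})$ contains $y$ (because $\val{f(y)} = 0 \leq \val{\varpi(y)}^n$), so by (i) it also contains $x$. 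This yields $\val{f(x)} \leq \val{\varpi(x)}^n$ for every $n$. But $\varpi \in A^{\circ\circ}$, and Proposition \ref{proposition:contiunity-criteria} applied to the continuous valuation $x$ gives that $\val{\varpi(x)}$ is co-final in $\Gamma_x$. Hence $\val{f(x)}$ is smaller than every element of $\Gamma_x$, which forces $\val{f(x)} = 0$. With the supports agreeing, given $\alpha = \overline{a}/\overline{b} \in A_y$, the pair $a,b \in A$ has $\val{b(y)} \neq 0$ and $\val{a(y)} \leq \val{b(y)}$, so $y \in U(\frac{a}{b})$. By (i) again, $x \in U(\frac{a}{b})$, which translates to $\alpha \in A_x$.

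For (ii) $\Rightarrow$ (i), let $U = U(\frac{g_1,\dotsc,g_r}{s})$ be any basic open neighborhood of $y$; it suffices to show $x \in U$. From $\val{s(y)} \neq 0$ we get $s \notin \supp(y) = \supp(x)$, so $\val{s(x)} \neq 0$. For each $i$, the inequality $\val{g_i(y)} \leq \val{s(y)}$ says $\overline{g_i}/\overline{s} \in A_y$, which by hypothesis lies in $A_x$, giving $\val{g_i(x)} \leq \val{s(x)}$. Hence $x \in U$, and $y \in \overline{\{x\}}$. This direction makes no use of Tate; it is the valuation-spectrum version of the specialization criterion of Exercise \ref{exercise:valuation-to-valuation-ring}, transported from $\Spv(K)$ to $\Cont(A)$ via the support-fiber identification.

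The hard step, as anticipated, is the co-finality argument that collapses $\supp(y)$ into $\supp(x)$; it is essentially the only place the Tate-ring structure enters, and it is precisely what rules out horizontal specializations within $\Cont(A)$ for Tate rings.
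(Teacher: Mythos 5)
Your proof is correct and follows essentially the same route as the paper: the paper's proof also isolates $\supp(y)\subseteq\supp(x)$ as the only place the Tate hypothesis enters, establishes it via the co-finality of $\val{\varpi(x)}$ and the rational subsets $U(\frac{f}{\varpi^n})$, and defers the remaining implications to Exercise \ref{exercise:supports-specialization}, which you have simply written out in full via the valuation-ring criterion.
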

\begin{proof}
We will show that if $y \in \overline{\{x\}}$, then  $\supp(y) \subseteq \supp(x)$. This is where the Tate condition on $A$ is used. The rest of \ref{lemma-part:tate-ring-specialization-closure} implies \ref{lemma-part:tate-ring-specialization-supports}, and all of \ref{lemma-part:tate-ring-specialization-supports} implies \ref{lemma-part:tate-ring-specialization-closure} are left as Exercise \ref{exercise:supports-specialization}. (The Tate condition can also be weakened. See Exercise \ref{exercise:supports-analytic}.)

Since $A$ is a Tate ring, we may choose a pseudo-uniformizer $\varpi \in A$. Since $y$ is a continuous valuation, $\val{\varpi(y)}$ is co-final in the value group $\Gamma_y$. Therefore,
\begin{align*}
f \in \supp(y) &\iff \val{f(y)} = 0\\
&\iff \val{f(y)} \leq \val{\varpi^n(y)}  & \text{(for all $n$).}
\end{align*}
The same equivalences hold if $x$ replaces $y$. Yet, if $y \in \overline{\{x\}}$ then, by definition,
\begin{equation*}
\val{f(y)}\leq \val{\varpi^n(y)} \implies \val{f(x)} \leq \val{\varpi^n(x)},
\end{equation*}
for all $n$. Therefore, $\supp(y) \subseteq \supp(x)$. 
\end{proof}

For the remainder of this section, we let $x = x_{0,1} \in D$. Let $K = \Frac(\Cp\lr w)$ and let $A_x \subseteq K$ be the valuation ring of $x$, with maximal ideal $\frakm_x$ and residue field $L_x$. Our second lemma determines the field $L_x$. Note that 
\begin{equation*}
\OCp\lr w \subseteq A_x = \{ f \in K \mid \val{f(x)}\leq 1\}.
\end{equation*}
This containment is strict because $\frac{1}{w}$ belongs to $A_x$ but not $\OCp\lr w$. More generally, $A_x$ contains $(w-\alpha)^{-1}$ for any $\alpha \in \OCp$, and none of those elements lie in $\OCp\lr w$. These examples are the only essential difference at the level of residue fields.

\begin{lemma}\label{lemma:Gauss-point-residue-field}
The inclusion $\OCp\lr w \subseteq A_x$ induces an isomorphism $\Fpbar(w) \cong L_x$.
\end{lemma}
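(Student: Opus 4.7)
The plan is to factor the map $\OCp\lr w \to A_x \twoheadrightarrow L_x$ through reduction of coefficients, extend it to $\Fpbar(w)$, and then show that every element of $L_x$ comes from this extension by a rescaling argument.

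First I would describe the natural map. Since $\val{f(x)} = \val{f}_1 = \max_i \norm{a_i}_p \leq 1$ for any $f = \sum a_i w^i \in \OCp\lr w$, the inclusion $\OCp\lr w \subseteq K$ lands in $A_x$, giving a ring homomorphism $\OCp\lr w \to L_x$. Its kernel is $\{f \in \OCp\lr w : \val{f}_1 < 1\}$, which coincides (using that $a_i \to 0$ forces the sup of $\norm{a_i}_p$ to be attained) with the set of series all of whose coefficients lie in $\frakm_{\OCp}$. Reducing each coefficient modulo $\frakm_{\OCp}$ therefore factors through this kernel; because $a_i \to 0$ the reduction has only finitely many non-zero terms and so lies in $\Fpbar[w]$. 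The quotient map $\OCp\lr w \to \Fpbar[w]$ is plainly surjective (lift by polynomials), so I obtain an injection $\Fpbar[w] \hookrightarrow L_x$.

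Next I would extend this to $\Fpbar(w)$. Any non-zero $\bar h \in \Fpbar[w]$ lifts to some $h \in \OCp\lr w$ with $\val{h}_1 = 1$, and such an $h$ is a unit in $A_x$ (its inverse $h^{-1} \in K$ also has Gauss valuation $1 \leq 1$). Hence the image of $\bar h$ in $L_x$ is a unit, and the map from $\Fpbar[w]$ extends uniquely to an injection $\Fpbar(w) \hookrightarrow L_x$ of fields.

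The main step, and the only place where the structure of $\Cp$ really enters, is surjectivity onto $L_x$. Given $\alpha \in A_x$, write $\alpha = f/g$ with $f, g \in \Cp\lr w$ and $g \neq 0$; the condition $\val{\alpha(x)} \leq 1$ is $\val{f}_1 \leq \val{g}_1$. Since $\Cp$ is algebraically closed, its value group equals $p^{\Q}$; in particular $\val{g}_1 = \max_i \norm{b_i}_p$ lies in $\norm{\Cp^\times}_p$, so I may choose $c \in \Cp^\times$ with $\norm{c}_p = \val{g}_1$. Setting $f' = f/c$ and $g' = g/c$, both lie in $\OCp\lr w$, and $\val{g'}_1 = 1$ so $g'$ is a unit in $A_x$. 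Therefore the image of $\alpha$ in $L_x$ equals $\overline{f'}/\overline{g'} \in \Fpbar(w)$, completing the proof.

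The only real obstacle is this last rescaling, which is where the hypothesis that $\Cp$ is algebraically closed (equivalently, that its value group is divisible and equal to $p^{\Q}$) is used; without this one could not clear denominators while keeping the representatives in $\OCp\lr w$.
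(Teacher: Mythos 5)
Your proof is correct, and the surjectivity step takes a genuinely different (and more elementary) route than the paper's. The paper invokes the Weierstrass preparation theorem to write any $f/g \in A_x$ as $(P/Q)u$ with $P,Q \in \Cp[w]$ polynomials and $u \in \OCp\lr w^\times$, normalizes $Q$ to lie in $\OCp[w]\setminus\frakm_{\OCp}[w]$, and reduces the polynomial fraction. You instead stay with power series: since the coefficients of a non-zero $g \in \Cp\lr w$ tend to zero, its Gauss norm is attained at some coefficient and hence lies in $\norm{\Cp^\times}_p$, so a single scalar rescaling puts both numerator and denominator in $\OCp\lr w$ with the denominator of Gauss norm exactly $1$, hence a unit in $A_x$. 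Both arguments ultimately rest on the same fact --- that $\val{g}_1 \in \norm{\Cp^\times}_p$ because $\Cp$ is algebraically closed with divisible value group --- but yours avoids Weierstrass preparation entirely, which makes the lemma self-contained, whereas the paper's version ties the computation back to its earlier structural description of $\Spec(\Cp\lr w)$ via irreducible factors. Your identification of the kernel as the series with all coefficients in $\frakm_{\OCp}$ (again using that the supremum of the $\norm{a_i}_p$ is attained) and the observation that the reduction lands in $\Fpbar[w]$ because only finitely many coefficients are units are both correct and match the paper's first paragraph in substance.
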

\begin{proof}
Note that $\frakm_x \cap \OCp\lr w = \frakm_{\OCp}\lr w$. Therefore,
\begin{equation*}
\Fpbar[w] \cong \OCp[w]/\frakm_{\OCp}[w] \cong \OCp\lr w/\frakm_{\OCp}\lr w \subseteq L_x,
\end{equation*}
which extends to $\Fpbar(w) \subseteq L_x$. We claim {\em this} inclusion is an equality.

To start, by Weierstrass preparation any fraction $f/g \in K$ can be expressed as
\begin{equation*}
\frac{f}{g} = \frac{P}{Q}u,
\end{equation*}
where $P,Q \in \Cp[w]$ and $u \in \OCp\lr w^\times$. We may multiply $Q$ by a non-zero scalar so that $Q \in \OCp[w]$ but not $\frakm_{\OCp}[w]$. Then, scale $P$ by the same factor.

Suppose now $f/g \in A_x$. Since $u \in \OCp\lr w^\times$, we have $\val{u(x)} = 1$. Therefore,
\begin{equation*}
\val{P(x)} \leq \val{Q(x)}.
\end{equation*}
Since $Q \in \OCp[w]$, we also have $P \in \OCp[w]$ by definition of $x$. Thus $P/Q \bmod \frakm_x \in \Fpbar(w)$. Since $u \bmod \frakm_x \in \Fpbar[w]$, we find $f/g \bmod \frakm_x \in \Fpbar(w)$, as claimed.
\end{proof}

It {\em seems} the discussion in this section and the previous one gives bijections
\begin{align*}
\Pone(\Fpbar) &\leftrightarrow \{\overline{B} \subseteq \Fpbar(w) \text{ a valuation subring}\} & \text{(Example \ref{example:valuations-function-field})}\\
&\leftrightarrow \{\overline{B} \subseteq L_x \text{ a valuation subring}\} & \text{(Lemma  \ref{lemma:Gauss-point-residue-field})}\\
&\leftrightarrow \{A_y \subseteq A_x \text{ a valuation subring of $K$}\} & \text{(Section \ref{subsection:valuation-ring-construction})}\\
&\leftrightarrow y \in \overline{\{x\}} & \text{(Lemma \ref{lemma:tate-ring-specialization})}.
\end{align*}
However, there is one caveat. For $\lambda \in \Pone(\Fpbar)$ we get a point $x_{0,1}^{\lambda} \in \Spv(\Cp\lr w)$ via the first three bijections, and $x_{0,1}^{\lambda} \in \overline{\{x\}}$ within the valuation spectrum, by the proof of Lemma \ref{lemma:tate-ring-specialization}. However, we need to show $x_{0,1}^{\lambda}$ is actually continuous!
\begin{theorem}\label{theorem:closure-gauss-point}
\leavevmode
\begin{enumerate}[label=(\alph*)]
\item Each $x_{0,1}^{\lambda}$ is a continuous valuation on $\Cp\lr w$.\label{theorem-part:closure-gauss-point-continuous}
\item If $\lambda \neq \infty$, then $x_{0,1}^{\lambda} \in D$. \label{theorem-part:closure-gauss-point-infinity}
\item Within $D$ we have $\overline{\{x_{0,1}\}} = \{x_{0,1}\} \cup \{x_{0,1}^{\lambda} \mid \lambda \in \Aone(\Fpbar)\}$.
\label{theorem-part:closure-gauss-point-final}
\end{enumerate}
\end{theorem}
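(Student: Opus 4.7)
My plan is to verify continuity using Proposition \ref{proposition:contiunity-criteria}, then chase through the bijection between valuation subrings of $A_x$ and of $L_x \cong \Fpbar(w)$, combining Lemmas \ref{lemma:tate-ring-specialization} and \ref{lemma:Gauss-point-residue-field} with Ostrowski's theorem. For \ref{theorem-part:closure-gauss-point-continuous}, I would apply Proposition \ref{proposition:contiunity-criteria} with the pair of definition $(\OCp\lr w, p\OCp\lr w)$, whose ideal is generated by the single element $p$. Writing $y = x_{0,1}^{\lambda}$, I must show $\val{p(y)}$ is co-final in $\Gamma_y$ and $\val{pf(y)} < 1$ for all $f \in \OCp\lr w$. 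The key input is \eqref{eqn:lifting-valuation-ring-ses}: $\Gamma_{\lambda} = \ker(\pi)$ is a convex subgroup of $\Gamma_y$, and a short convexity argument shows that any $\eta \in \Gamma_y$ with $\pi(\eta) < 1$ in $\Gamma_x$ automatically satisfies $\eta < 1$ — if instead $\eta \geq 1$, then $1 \leq \eta \leq \delta$ for some $\delta \in \Gamma_{\lambda}$ (unwinding the definition of the quotient order) would force $\eta \in \Gamma_{\lambda}$ by convexity, contradicting $\pi(\eta) \neq 1$. Applied with $\eta = \val{p(y)}^n \gamma^{-1}$ for arbitrary $\gamma \in \Gamma_y$ — where the projection $p^{-n}/\pi(\gamma)$ is eventually $<1$ in $\Gamma_x = p^{\Q}$ — this gives cofinality; applied with $\eta = \val{pf(y)}$, whose projection is $\leq 1/p$, it gives the desired bound.

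For \ref{theorem-part:closure-gauss-point-infinity}, Lemma \ref{lemma:Gauss-point-residue-field} identifies the reduction of $\OCp\lr w \subseteq A_x$ in $L_x$ with the polynomial subring $\Fpbar[w]$. For $\lambda \in \Aone(\Fpbar)$, every polynomial has nonnegative order of vanishing at $\lambda$, so $\Fpbar[w] \subseteq \overline{B}_{\lambda}$; pulling this back through $\pi$ gives $\OCp\lr w \subseteq A_y$, i.e.\ $\val{f(y)} \leq 1$ for all $f \in \OCp\lr w$, which is precisely $y \in D$. Note the argument fails at $\lambda = \infty$ already at $f = w$, which is consistent with $x_{0,1}^{\infty} \not\in D$.

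For \ref{theorem-part:closure-gauss-point-final}, one inclusion follows from \ref{theorem-part:closure-gauss-point-continuous}, \ref{theorem-part:closure-gauss-point-infinity}, and the already-noted fact that $x_{0,1}^{\lambda} \in \overline{\{x_{0,1}\}}$ within $\Spv(\Cp\lr w)$. For the reverse, take $y \in D$ with $y \in \overline{\{x_{0,1}\}}$ and $y \neq x_{0,1}$: Lemma \ref{lemma:tate-ring-specialization} yields $\supp(y) = \{0\}$ and $A_y \subsetneq A_x$, so the bijection \eqref{eqn:valuation-ring-quotient} combined with Lemma \ref{lemma:Gauss-point-residue-field} attaches to $y$ a proper valuation subring of $\Fpbar(w)$, which Ostrowski (Example \ref{example:valuations-function-field}) identifies as some $\overline{B}_{\lambda}$ with $\lambda \in \Pone(\Fpbar)$. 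So $y = x_{0,1}^{\lambda}$, and \ref{theorem-part:closure-gauss-point-infinity} excludes $\lambda = \infty$. The main obstacle I anticipate is the convexity bookkeeping in \ref{theorem-part:closure-gauss-point-continuous}; once that piece of ordered-group reflection is clean, the rest is a direct chain through previously established structural results.
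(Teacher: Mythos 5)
Your proof is correct, and parts \ref{theorem-part:closure-gauss-point-infinity} and \ref{theorem-part:closure-gauss-point-final} follow the paper's route exactly (same chain through Lemma \ref{lemma:tate-ring-specialization}, Lemma \ref{lemma:Gauss-point-residue-field}, the bijection \eqref{eqn:valuation-ring-quotient}, and Ostrowski). Where you genuinely diverge is part \ref{theorem-part:closure-gauss-point-continuous}. The paper first shows $\OCp \subseteq A_y$, deduces that the sequence \eqref{eqn:lifting-valuation-ring-ses} is \emph{split} as totally ordered abelian groups (the composite $\Cp^\times/\OCp^\times \to K^\times/A_y^\times \to K^\times/A_x^\times$ being an isomorphism), identifies $\Gamma_y \cong \Gamma_x \times (\tfrac{1}{2})^{\Z}$ with the lexicographic order via Exercise \ref{exercise:convex-subgroups}\ref{exercise-part:convex-subgroups-splitting}, and then cites the explicit continuity check of Section \ref{subsec:valuation-final-example}. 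You skip the splitting entirely and argue in the quotient order: $\Gamma_\lambda = \ker(\pi)$ is convex, and a strict inequality $\pi(\eta) < 1$ reflects to $\eta < 1$; feeding $\eta = \val{p(y)}^n\gamma^{-1}$ and $\eta = \val{pf(y)}$ into Proposition \ref{proposition:contiunity-criteria} then does the job. Your convexity reflection is correct, and your version is more economical and would survive in settings where the sequence does not split; the paper's version buys the explicit lexicographic description of $\Gamma_y$ that is recorded in Table \ref{table:points-in-D} and used to identify $x_{0,1}^{0}$ with $x_{1^-}$.

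One small point to tighten in \ref{theorem-part:closure-gauss-point-final}: you appeal to \ref{theorem-part:closure-gauss-point-infinity} to "exclude $\lambda=\infty$", but as stated \ref{theorem-part:closure-gauss-point-infinity} is only the implication $\lambda\neq\infty \Rightarrow x_{0,1}^{\lambda}\in D$; what you actually need is the converse, $x_{0,1}^{\infty}\notin D$. You do flag the relevant fact ($w\notin \overline B_\infty$, hence $\val{w(x_{0,1}^{\infty})}>1$), so nothing is missing mathematically, but you should state it as a proved biconditional — as the paper does in its proof of \ref{theorem-part:closure-gauss-point-infinity} — rather than as a consistency remark.
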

\begin{proof}
The majority of the proof is an explicit analysis in support of \ref{theorem-part:closure-gauss-point-continuous}.

Let $\pi: A_x \rightarrow L_x$ be the natural projection map. Note $\Fpbar(w) \cong L_x$ by Lemma \ref{lemma:Gauss-point-residue-field} and, in this identification, $\pi(\OCp) \subseteq \Fpbar$. For $\lambda \in \Pone(\Fpbar)$ we let $\overline B_\lambda \subseteq L_x$ be the valuation ring arising from $\lambda$ in Example \ref{example:valuations-function-field}. By direct examination, $\Fpbar \subseteq \overline B_\lambda$ and therefore $\OCp\subseteq \pi^{-1}(\overline B_\lambda)$. 

The value group of $x$ is identified as $K^\times/A_x^\times$ by Section \ref{subsec:valuation-ring-review}. Let $y = x_{0,1}^{\lambda}$ and $A_y \subseteq A_x$ be its valuation ring. The prior paragraph shows that $\OCp \subseteq A_y$. Therefore, the value group $K^\times/A_y^\times$ fits into a diagram
\begin{equation*}
\xymatrix{
\Cp^\times \ar[r]^-{y} \ar@{>>}[d]  & K^\times/A_y^\times \ar@{>>}[d] \\
\Cp^\times/\OCp^\times \ar[r]^-{\simeq}_-{x} \ar@{.>}[ur] & K^\times/A_x^\times.
}
\end{equation*}
The diagonal arrow is injective because the bottom arrow is. Thus, the exact sequence
\begin{equation*}
1 \rightarrow \Gamma_{\lambda} \rightarrow K^\times/A_y^\times \rightarrow K^\times/A_x^\times \rightarrow 1
\end{equation*}
in \eqref{eqn:lifting-valuation-ring-ses} is split as  totally ordered abelian groups. From Example \ref{example:valuations-function-field}, the kernel is $\Gamma_{\lambda} \cong (\frac{1}{2})^{\Z}$. Moreover, part \ref{exercise-part:convex-subgroups-splitting} of Exercise \ref{exercise:convex-subgroups} implies the ordering on
\begin{equation*}
\Gamma_y \cong \Gamma_x \times (\frac{1}{2})^{\Z}
\end{equation*}
is the {\em lexicographic order}. The analysis  in Section \ref{subsec:valuation-final-example} now shows $y$ defines a continuous valuation. This completes the proof of \ref{theorem-part:closure-gauss-point-continuous}. 

For \ref{theorem-part:closure-gauss-point-infinity} we need to see that $\OCp\lr w \subseteq A_y$ if and only if $\lambda \neq \infty$. By the bijection in Section \ref{subsection:valuation-ring-construction}, it is equivalent to see that $\Fpbar[w] \subseteq \overline B_\lambda$ if and only if $\lambda \neq \infty$. But that is clear by definition of the $\lambda$-adic valuation on $\Fpbar(w)$. Finally, \ref{theorem-part:closure-gauss-point-final} follows from \ref{theorem-part:closure-gauss-point-continuous} and \ref{theorem-part:closure-gauss-point-infinity}, together with the overall discussion preceding the theorem statement.
\end{proof}

The reader can work as Exercise \ref{exercise:exotic-valuation-explicit} that the splitting used to prove \ref{theorem-part:closure-gauss-point-continuous} gives rise to identifications $x_{0,1}^{0} = x_{1^-}$ and $x_{0,1}^{\infty} = x_{1^+}$ as in Section \ref{subsec:valuation-final-example}. We also outline, as Exercise \ref{exercise:closure-adjustments}, the adjustments required to adapt the process to other points of $D$. Namely, for $r < 1$ and $\alpha \in \OCp$, we see in Section \ref{subsec:D-points-classification} that $x_{\alpha,r}$ is a non-closed point when $r$ lies in the value group of $\Cp$. In fact, the closure will be in bijection with $\Pone(\Fpbar)$. (The exception in Theorem \ref{theorem:closure-gauss-point}\ref{theorem-part:closure-gauss-point-infinity} disappears.)

The analysis of {\em all} the points in $D$ is incomplete, since we did not say anything at all about Type 3 or 4 points in Table \ref{table:points-in-D}. The reader who wants all the details can see \cite[Section 11]{Conrad-PerfectoidSeminarNotes}. Or, note that if $\Cp$ is replaced by a non-Archimedean field whose value group is $\R_{>0}$ and which is spherically complete, then the only points that exist are Types 1, 2, and 5. Type 3 points become Type 2 and Type 4 points become Type 1. From this perspective, the introduction of the Type 5 points is really the heart of the adic unit disc beyond the classical and rational disc points.

\subsection{Final comments}\label{subsection:final-comments}

What might the reader look at next? In Lemma \ref{lemma:tate-ring-specialization}, we realized every point in the closure of the Gauss point in $D$ has generic support $\{0\}$. This is a more general phenomenon, occurring at \term{analytic} points on adic spectra. We introduce this notion in Exercises \ref{exercise:analytic} and \ref{exercise:supports-analytic}. 

The reader would do well to understand the notion of analytic points, next. The Lemma \ref{lemma:tate-ring-specialization} we proved implicitly uses that every point on an adic specturm of a Tate ring is analytic. Analytic points are used more generally to analyze adic spectra. A good target theorem for a learner would be \cite[Proposition 3.6]{Huber-ContinuousValuations} on whether $\Spa(A,A^+)$ is empty or not. We discussed this on page \pageref{pageref:emptiness}. It is plausible to unwind the argument of that result using the tools outlined in these notes (cf.\ \cite[Section 11.6]{Conrad-PerfectoidSeminarNotes} and \cite[Section III.4.4]{Morel-AdicSpaceNotes}). In doing so, the reader will need to follow the construction of certain spaces $\Spv(A,I)$ of ``valuations with support conditions'' introduced by Huber. The benefit of doing so would be that these spaces with support conditions, and specialization arguments as in Section \ref{subsubsec:specialization}, are crucially applied in proving Huber's Theorem \ref{thm:hubers-main-theorem} on the geometric structure of $\Spa(A,A^+)$.

The other option, hopefully one the reader has already begun, is plowing ahead with the sheaf theory on adic spaces and perfectoid spaces outlined in the sibling lectures \cite{Hubner-AdicSpaces,Johansson-Sheaves,Heuer-PerfectoidLectures}. It is completely plausible for users of Huber's theory of adic spaces to never truly need to study the proof of Theorem \ref{thm:hubers-main-theorem}, as long as their intuition is guided by enough examples (as  Section \ref{section:some-points} tries to illustrate).

\subsection*{Section \thesection~ Exercises}

\begin{exercise}\label{exercise:disc-valuation}
Let $\alpha \in \OCp$ and $r$ a real number with $0 < r\leq 1$. For $f \in \Cp\lr w$ we write
\begin{equation*}
f = b_0 + b_1(w-\alpha) + b_2(w-\alpha)^2 + \dotsb
\end{equation*}
with $b_i \in \Cp$ converging to zero as $i \rightarrow \infty$. Let
\begin{equation*}
\val{f(x_{\alpha,r})} = \max_{i\geq 0} \norm{b_i}r^i.
\end{equation*}
\begin{enumerate}[label=(\alph*)]
\item Show that $x_{\alpha,r}$ defines a (continuous) valuation on $\Cp\lr w$.\label{exercise-part:disc-valuation-is-valuation}
\item Show that 
\begin{equation*}
f \mapsto \sup_{\alpha'\in D_r(\alpha)} \norm{f(\alpha')}_p
\end{equation*} 
is continuous on $\Cp\lr w$.\label{exercise-part:disc-valuation-sup-norm-cont}
\item Show that
\begin{equation*}
\val{f(x_{\alpha,r})} = \sup_{\alpha' \in D_{r}(\alpha)} \norm{f(\alpha')}_p.
\end{equation*}\label{exercise-part:disc-valuation-sup-norm}
\end{enumerate}
\end{exercise}
\begin{hint}
Once \ref{exercise-part:disc-valuation-is-valuation} and \ref{exercise-part:disc-valuation-sup-norm-cont} are shown, \ref{exercise-part:disc-valuation-sup-norm} can be checked directly on polynomials. It may be helpful to study the case $r \in \norm{\Cp^\times}_p = p^{\Q}$ first and do the general case as a limiting process.
\end{hint}

\begin{solution}
\begin{enumerate}[label=(\alph*)]
\item See \cite[Lemma 11.2.3]{Conrad-PerfectoidSeminarNotes}. (The reference only works on $\Cp[w]$ technically, but the proof itself works fine.)
\item If $f \in p^m \OCp\lr w$ then $f \in p^m\OCp\lr{w-\alpha}$. Write $f = \sum b_i (w-\alpha)^i$. Then, for all $\norm{\alpha-\alpha'}_p \leq r$, we have
\begin{equation*}
 \norm{f(\alpha')}_{p} \leq p^{-m}\cdot  \max_{i\geq 0} \norm{b_i}_p r^i
\end{equation*}
Thus if $f$ is small then so is $\norm{f(\alpha')}_p$, uniformly in $\alpha' \in D_r(\alpha)$. This shows the conclusion.
\item See hint.
\end{enumerate}
\end{solution}

\begin{exercise}\label{exercise:lifting-valuation-rings} 
Let $K$ be a field and $A \subseteq K$ a valuation ring with maximal ideal $\frakm$. Let $L = A/\frakm$ be the residue field of $A$ and $\pi : A \rightarrow L$ be the natural projection.
\begin{enumerate}[label=(\alph*)]
\item Show that if $\overline B \subseteq L$ is a valuation subring, then $B = \pi^{-1}(\overline B) \subseteq A$ is a valuation subring of $K$ as well.
\item By Exercise \ref{exercise:valuation-ring-to-valuation}, the valuation rings $A$, $B$, and $\overline B$ correspond to valuations on $K$, $K$, and $L$. Show that the corresponding value groups sit in a natural exact sequence
\begin{equation*}
1 \rightarrow \Gamma_{\overline B} \rightarrow \Gamma_B \rightarrow \Gamma_A \rightarrow 1.
\end{equation*}
\item Let $x_A,x_B \in \Spv(K)$ be the valuations corresponding to $A$ and $B$, respectively. Show that $x_B \in \overline{\{x_A\}}$ within $\Spv(K)$.
\end{enumerate}
\end{exercise}
\begin{solution}
\begin{enumerate}[label=(\alph*)]
\item Replacing $\alpha$ by $\alpha^{-1}$ if necessary, the fundamental claim is that if $\alpha \in A \subseteq K$, but $\pi(\alpha) \not\in B$, then $\pi(\alpha^{-1}) \in B$. Of course, we first must observe that if $\pi(\alpha) \not\in B$, then $\alpha \in A$, because $\pi(\alpha) \neq 0$. The fact that $\pi(\alpha^{-1}) \in B$ now follows from $B$ being assumed to be a valuation subring of $L$.
\item Recall that $\Gamma_A = K^\times/A^\times$. So, the containment $B \subseteq A$ gives the natural quotient map
\begin{equation*}
K^\times/B^\times \rightarrow K^\times/A^\times \rightarrow 1.
\end{equation*}
The kernel of this quotient map is
\begin{equation*}
A^\times/B^\times \overset{\pi}{\simeq} L^\times/\overline B^\times =: \Gamma_{\overline B}
\end{equation*}
\item This is essentially part \ref{exercise-part:valuation-to-valuation-ring-closure} from Exercise \ref{exercise:valuation-to-valuation-ring}.
\end{enumerate}
\end{solution}

\begin{exercise}\label{exercise:ostrowskis-theorem}
Suppose that $F$ is an algebraically closed field. For $\lambda \in \Pone(F)$ define $\bval_\lambda: F(w) \rightarrow \R_{\geq 0}$
by
\begin{equation*}
\val{f}_\lambda = (\frac{1}{2})^{\ord_{w=\lambda}(f)}.
\end{equation*}
(Note that $\val{f}_\infty = 2^{-\deg(f)}$, for $f \in F[w]$.)
\begin{enumerate}[label=(\alph*)]
\item Show that $\bval_\lambda$ is a valuation for all $\lambda$.
\end{enumerate}
Now suppose $F(w) \xrightarrow{\bval} \Gamma\cup \{0\}$ is any valuation and $\val{\alpha} \leq 1$ for all $\alpha \in F$. (This condition on the scalars is automatic if $F = \Fpbar$ because every non-zero element of $\Fpbar$ is a root of unity.)
\begin{enumerate}[label=(\alph*)]
\setcounter{enumi}{1}
\item If $\val{w} > 1$, show that  $\val{a_0 + a_1 w + \dotsb + a_n w^n} = \val{w}^n$ for all $a_0,\dotsc,a_n \in F$ with $a_n \neq 0$. Conclude that $\bval$ is equivalent to $\bval_\infty$.
\item Now suppose $\val{w} \leq 1$. 
\begin{itemize}
\item Show that $\frakp = \{f \in F[w] \mid \val{f(w)} < 1\}$ is a prime ideal in $F[w]$.
\item Show that if $\frakp = \{0\}$, then $\bval$ is equivalent to $\bval_{\triv}$.
\item Show that if $\frakp = \lr{w-\lambda}$, then $\bval$ is equivalent to $\bval_{\lambda}$.
\end{itemize}
\end{enumerate}
\end{exercise}
\begin{solution}
\end{solution}

\begin{exercise}\label{exercise:supports-specialization}
Let $A$ be any ring. Let $x,y \in \Spv(A)$.
\begin{enumerate}[label=(\alph*)]
\item Suppose that $y \in \overline{\{x\}}$. Show that $\supp(x) \subseteq \supp(y)$.
\item Show that if $\supp(x) = \supp(y)$ and $A_y \subseteq A_x$, then $y \in \overline{\{x\}}$.
\end{enumerate}
\end{exercise}
\begin{solution}
\end{solution}

\begin{exercise}\label{exercise:analytic}
Let $A$ be a Huber ring and $x \in \Cont(A)$. Recall, $\supp(x) \subseteq A$ is always a closed prime ideal. We call $x$ \term{analytic} if $\supp(x)$ is {\em not} open in $A$. Show that the following are equivalent for $x \in \Cont(A)$:
\begin{enumerate}[label=(\roman*)]
\item The point $x$ is analytic.\label{exercise-part:analytic-not-open}
\item There exists $f \in A^{\circ\circ}$ such that $\val{f(x)} \neq 0$.\label{exercise-part:analytic-topologically-nilpotent}
\item For any ideal of definition $I$ there exists $f \in I$ such that $\val{f(x)}\neq 0$.\label{exercise-part:analytic-any-ideal}
\end{enumerate}
\end{exercise}
\begin{solution}
If \ref{exercise-part:analytic-not-open} holds, then there exists some ideal of definition $I$ such that $I \not\subseteq \supp(x)$. Therefore, for some $f \in I$ we have $\val{f(x)} \neq 0$. Since $f \in A^{\circ\circ}$ we see that \ref{exercise-part:analytic-topologically-nilpotent} holds. So, \ref{exercise-part:analytic-not-open} implies \ref{exercise-part:analytic-topologically-nilpotent}.

We now show  \ref{exercise-part:analytic-topologically-nilpotent} implies \ref{exercise-part:analytic-any-ideal}. Suppose $f \in A^{\circ\circ}$ such that $\val{f(x)}\neq 0$ and $I$ is any ideal of definition. Then $f^d \in I$ for some $d \geq 0$ and $\val{f(x)^d} \neq 0$. This proves \ref{enum-part:analytic-any-ideal} holds.

Finally, \ref{exercise-part:analytic-any-ideal} implies \ref{exercise-part:analytic-not-open} since $\supp(x)$ being open would imply it contains some ideal of definition.
\end{solution}

\begin{exercise}\label{exercise:supports-analytic}
Let $A$ be a topological ring. Show that if $x \in \Cont(A)$ is analytic and $y \in \Cont(A)$ lies in $\overline{\{x\}}$, then $\supp(y) = \supp(x)$. Therefore, if $x$ is analytic, then its only specializations are vertical. (See Exercise \ref{exercise:vertical-specialization}.)
\end{exercise}

\begin{exercise}\label{exercise:exotic-valuation-explicit}
Let $\lambda \in \Pone(\Fpbar)$. Consider $x_{0,1}^{\lambda}$ constructed in Section \ref{subsection:gauss-closure}. Show that
\begin{equation*}
\Cp\lr w \xrightarrow{x_{0,1}^{\lambda}} \R_{>0} \times \R_{>0} \cup \{0\}
\end{equation*}
can be defined by the following recipe:
\begin{itemize}
\item First, $\val{p(x_{0,1}^{\lambda})} = (\frac{1}{p},1)$.
\item Second, if $f \in \OCp\lr w$ and $\bar f \in \Fpbar[w]$ is its reduction modulo $\frakm_{\OCp}$, then
\begin{equation*}
\val{f(x_{0,1}^{\lambda})} = (\val{f}_1,\val{\bar f}_{\lambda}).
\end{equation*}
\end{itemize}
Conclude that $x_{0,1}^{0} = x_{1^-}$ and $x_{0,1}^{\infty} = x_{1^+}$.
\end{exercise}

\begin{exercise}\label{exercise:closure-adjustments}
Let $\alpha \in \OCp$ and $\beta \in \frakm_{\OCp}$. Let $r = \norm{\beta}_p < 1$. Let $x = x_{\alpha,r}$ be the disc point given in Section \ref{subsec:classical-and-disc-points}.
\begin{enumerate}[label=(\alph*)]
\item Show that $t=\frac{w-\alpha}{\beta} \in A_x$ and there is a natural isomorphism $\Fpbar(t) \cong A_x/\frakm_x$.
\item Show that the containment $\OCp\lr w \subseteq A_x$ has image $\Fpbar$ in $A_x/\frakm_x$.
\item Show that inside $D$, the closure of $x$ is given by 
\begin{equation*}
\overline{\{x_{\alpha,r}\}} = \{x_{\alpha,r}\} \cup \{x_{\alpha,r}^{\lambda} \mid \lambda \in \Pone(\Fpbar)\},
\end{equation*}
for points $x_{\alpha,r}^{\lambda}$ constructed via the mechanism of Section \ref{subsection:valuation-ring-construction}.
\end{enumerate}
\end{exercise}
\begin{solution}
\begin{enumerate}[label=(\alph*)]
\item 
\item Let' assume $a$. Then, the point is that 
\begin{equation*}
w = \alpha + (w-\alpha) \in \frakm_{x}
\end{equation*}
since $r < 1$. It follows that the image of $\OCp\lr w$ in $A_{x_{\alpha,r}}/\frakm_{x_{\alpha,r}}$ is contained within $\Fpbar$. 
\item 
\end{enumerate}
\end{solution}

\bibliography{huber-bibliography.bib}
\bibliographystyle{alpha}

\end{document}